\definecolor{lightblue}{rgb}{0.22,0.45,0.70}
\begin{document}

\title{Divergence-free and mass-conservative virtual element methods for the Navier--Stokes-Cahn--Hilliard system}
\author{Alberth Silgado\thanks{GIMNAP, Departamento de Ciencias B\'asicas, Universidad del B\'io-B\'io, Chill\'an, Chile.
E-mail: {\tt asilgado@ubiobio.cl}},\quad 
Giuseppe Vacca\thanks{Dipartimento di Matematica, Universit\`a degli Studi di Bari, via E. Orabona, 4, 70125,  Bari, Italia.
	E-mail: {\tt giuseppe.vacca@uniba.it}}  
}
\date{}
\maketitle

\begin{abstract}
In this work, we design and analyze semi/fully-discrete virtual element approximations for the time-dependent Navier--Stokes-Cahn--Hilliard equations, modeling the dynamics of two-phase incompressible fluid flows with diffuse interfaces.
A new variational formulation is derived involving solely the velocity, pressure, and phase field, together with corresponding a priori energy estimates. The spatial discretization is based on the coupling divergence-free and $C^1$-conforming elements of high-order, while the time discretization employs a classical backward Euler scheme. By introducing a novel skew-symmetric trilinear form to discretize the convective term in the Cahn--Hilliard equation, we propose discrete schemes that satisfy mass conservation and energy bounds. Moreover, optimal error estimates are provided for both formulations. Finally, two numerical experiments are presented to support our theoretical findings and to illustrate the good performance of the proposed schemes for different polynomial degrees and polygonal meshes.
\end{abstract}

\noindent
{\bf Keywords}:  Virtual element method, divergence-free,  mass-conservative, $C^1$-conforming schemes, Navier-Stokes-Cahn-Hilliard system, convergence analysis.

\smallskip\noindent
{\bf Mathematics subject classifications (2000)}:  65M60, 35K55, 76D05

\maketitle
\section{Introduction}
\paragraph{Scope and contributions.}  The Navier--Stokes-Cahn--Hilliard (NS-CH) system is a widely used mathematical model for describing the dynamics of two-phase incompressible fluid flows with diffuse interfaces. It couples the incompressible Navier--Stokes (NS) equations, which govern fluid motion, with the Cahn--Hilliard (CH) equation, which models phase separation and interfacial dynamics in a binary mixture. The coupling is realized through an additional phase-induced stress term in the NS equations and a convective transport term in the CH equation driven by the fluid velocity. This coupled system finds applications in processes involving complex multiphase flows, which play a crucial role in engineering and industrial contexts.  These include spinodal decomposition in binary mixtures, interfacial instabilities, phase inversion in emulsions, as well as the behaviour of polymer and microfluidic flows (see, for instance, \cite{Gross_2011,sussumanJCP2007,KIM2004,Diegel2017} and the references therein).  
Moreover, the NS-CH system provides an alternative to classical sharp interface models, where the interface is treated as a free-moving curve or surface evolving with the fluid. The classical interface problems are often challenging from both theoretical and computational perspectives due to the presence of surface tension and the occurrence of topological changes during the evolution, such as self-intersection, fattening, pinch-off, and splitting. In contrast, the diffuse-interface approach has been widely used to address these difficulties without requiring explicit interface tracking (for further details of this aspect, we refer to \cite{Feng2006}).

Despite its versatility, however, the NS-CH system also presents inherent mathematical and numerical challenges. The model is strongly nonlinear and involves fourth-order spatial derivatives due to the CH component, which makes its analysis and discretization nontrivial. The classical coupling between velocity, pressure, phase field, and chemical potential introduces additional stiffness and requires careful treatment to ensure stability, mass conservation, and energy dissipation at the discrete level. Moreover, the presence of small interfacial thickness parameters necessitates fine spatial and temporal resolution, further increasing the computational cost. 

Over the last years, considerable effort has been devoted to developing efficient numerical methods for the NS-CH system, mainly based on mixed Galerkin discretizations. For instance, in~\cite{Feng2006} the authors developed and analyzed a  FEM for the NS-CH equations that satisfies both a mass conservation law and a discrete energy law for the phase field. The time derivative is discretized using a backward Euler scheme, while the $\P_2-\P_0$ mixed finite element pair is employed for the spatial discretization. Well-posedness and error estimates for the fully-discrete formulation are provided. A similar approach was independently presented in~\cite{KSW2008}. More recently, in~\cite{Diegel2017}, the authors proposed a novel second-order-in-time scheme for the velocity formulation, with error estimates for the fully discrete problem. Other numerical schemes for related systems can be found in~\cite{Diegel2015,MR3324579,MR3606459,MR4921258}.

We emphasize that the NS and CH problems represent active lines of research, even when the two systems are considered separately. In particular, recent years have witnessed several contributions focused on the development of numerical schemes for the NS equations that exactly preserve the divergence-free constraint, which is essential for accurately capturing incompressible flow dynamics. This approach positively influences the computation of the velocity field in various challenging scenarios, including asymptotic pressure-robustness, hydrostatic and convection-dominated flows, and other situations where standard methods may suffer from loss of accuracy or stability~\cite{GN2014,div-free-SIAMreview}.  On the other hand, various numerical techniques have been developed to solve the CH system, including nonconforming and mixed finite element methods \cite{EF89,FPNM2004}, Discontinuous Galerkin and Hybrid High-Order schemes, isogeometric analysis, among others (see for instance~\cite{Elliot-NM,Sulli-SINUM,Xia2007,CDMP2016,GCBYH2008} and reference therein). A central difficulty in designing numerical schemes for the CH equation lies in the presence of fourth-order spatial derivatives, which require either $C^1$-conforming discretizations or a reformulation that introduces auxiliary variables—both of which increase the complexity of the method.

In recent years, several Virtual Element Methods (VEMs) have been developed and analyzed to solve the NS and CH models separately. In particular, divergence-free and $C^1$-conforming VEMs have been employed on general polygonal meshes to approximate the NS equations \cite{BLV-SINUM2018,MR4367665,BMV2019} and the CH systems~\cite{ABSV-SINUM2016,ASVV-2022}, respectively. These approaches exploit the method's ability to construct discrete subspaces that satisfy the incompressibility condition exactly and to handle higher-order operators naturally, without the need to introduce auxiliary variables. For alternative formulations we also refer to \cite{LHC2020,DH2024}, where mixed and nonconforming VEMs have been proposed and analyzed.

Initially introduced in \cite{BBCMMR-basicVEM}, the VEM is a Galerkin-type approach on polytopal meshes that enables the construction of stable discretizations without the need for explicitly defined basis functions. Its versatility is evidenced by remarkable features, such as the design of exactly divergence-free spaces for incompressible flows~\cite{ABMV-SINUM2016,BLV-M2AN2017,BLV-SINUM2018} and $H^2$-conforming approximations for fourth-order PDEs~\cite{BM-CMAME2013,CM-CAMWA2016,ABSV-SINUM2016,Brenner-VEM-M3AS,AMNS2021}. Moreover, the method exhibits strong robustness with respect to mesh distortion and offers natural adaptability to complex geometries, which makes it particularly well-suited for applications in fluid dynamics and phase-field models. Over the years, the VEM has been successfully applied to a wide range of problems, and its analysis has been extended to different others~\cite{VEM-elasticity-SINUM,VEM-aposteriori-NM,VEM-div-curl-NM,VEM-Carstensen-NM,VEM-Long-MC,VEM-NS-PNP-JSC,MR4849440}, demonstrating its flexibility across a broad range of PDEs. For a comprehensive account of the construction, theoretical properties of these spaces, and related developments, we refer the reader to \cite{VEM-ActaNum2023}.

According to the above discussion, in the present contribution we further explore the ability of the VEM to approximate challenging coupled nonlinear fluid-flow and fourth-order problems. In particular, we design and analyze a numerical scheme for the time-dependent NS-CH system posed on both standard simplicial meshes and more general polygonal grids.

At the continuous level, we propose a novel variational formulation involving only the velocity, pressure, and phase-field variables. This approach avoids the introduction of the chemical potential as an auxiliary unknown, which is customary in classical mixed formulations and typically leads to systems involving four strongly coupled variables with increased analytical complexity due to their mixed nature. In addition, for the resulting formulation, we establish suitable energy estimates and prove a mass conservation law.

At the discrete level, by coupling exactly divergence-free and $C^1$-conforming virtual element spaces, we construct a semi-discrete scheme, which is subsequently combined with a backward Euler time discretization to obtain a fully discrete formulation. As a key contribution of this work, we introduce a novel skew-symmetric trilinear form for the discretization of the convective term in the CH equation. This choice allows us to preserve discrete mass conservation and to derive energy bounds at both the semi- and fully-discrete levels. In addition, we establish essential boundedness properties of the discrete forms, which play a crucial role in the subsequent error analysis.
We emphasize that the analysis primarily focuses on the spatial discretization, which constitutes the core novelty of the method, with particular attention to the coupling of divergence-free and  $C^1$-conforming virtual element spaces, as well as to the proposed mass conservation law. For completeness, some error estimates for the fully-discrete scheme are also provided, while the temporal discretization is handled using standard techniques.

Building on these results, we introduce suitable Stokes- and Cahn--Hilliard-type Ritz projections, which map functions from the continuous solution spaces onto their virtual discrete counterparts, and establish their approximation properties. In particular, we derive new error estimates for the Cahn--Hilliard-type Ritz projection within this high-order conforming framework. We then derive new error estimates that explicitly account for the variational crimes arising from the discretization of the trilinear forms. By combining these estimates with suitable discrete Gr\"{o}nwall-type inequalities, we obtain optimal convergence rates for all variables in the corresponding energy norms, under standard regularity assumptions. Finally, we propose a simple projection-based postprocessing to approximate the chemical potential (cf. Remark~\ref{rm:chemical}) and present numerical experiments that validate the theoretical findings and illustrate the effectiveness of the proposed fully-discrete scheme.

To the best of our knowledge, this is the first work coupling exactly divergence-free and $C^1$-conforming Galerkin schemes for the NS-CH system using only the velocity, pressure, and phase-field variables. This yields a structure-preserving discretization that enforces incompressibility exactly, preserves mass, avoids additional unknowns, and naturally handles higher-order derivatives. Such properties enhance both accuracy and stability, as previously observed when the two systems were considered separately (cf.~\cite{BLV-SINUM2018,ABSV-SINUM2016}).

\paragraph{Notation.}
Let $\Omega \subset \R^2$ be a polygonal domain with boundary $\Gamma := \partial \Omega$, and let
$\bn = (n_i)_{1 \le i \le 2}$ denote the outward unit normal vector on $\Gamma$.
We denote by $\partial_{\bn}$ the normal derivative and by
$\boldsymbol{t} = (t_i)_{1 \le i \le 2}$ the unit tangential vector on $\Gamma$, oriented such that
$t_1 = -n_2$ and $t_2 = n_1$.

With a usual notation   
the symbols $\nabla$ and $\Delta$, $\bD^2$  
denote the gradient, the Laplacian and the Hessian for scalar functions, while 
$\boldsymbol{\nabla} $, $\boldsymbol{\Delta}$,   ${\rm div}$
denote the gradient, the  Laplacian  and the divergence operator
for vector fields.

For any open bounded domain $\mD \subseteq \Omega$, we use the standard notation for the Banach spaces
$L^q(\mD)$ and the Sobolev spaces $W_q^s(\mD)$, with $s \ge 0$ and $q \in [1,\infty]$, endowed with their
usual seminorms and norms.
In particular, for $q=2$ we identify $W_2^s(\mD) = H^s(\mD)$.
The space $L^2(\mD)$ is equipped with the inner product
\[
(\phi,\psi)_{\mD} := \int_{\mD} \phi\,\psi .
\]
For $s \ge 0$, the the seminorm and norm in $H^s(\mD)$ are given by
\[
|\phi|_{s,\mD} := \left( \sum_{|\boldsymbol{\alpha}| = s} \|\partial^{\boldsymbol{\alpha}} \phi\|_{0,\mD}^2 \right)^{1/2}, \quad \|\phi\|_{s,\mD} := \left( \sum_{|\boldsymbol{\alpha}| \le s} \|\partial^{\boldsymbol{\alpha}} \phi\|_{0,\mD}^2 \right)^{1/2},
\]
where $\boldsymbol{\alpha} \in \N_0^2$ is a multi-index and
$\partial^{\boldsymbol{\alpha}} := \partial_{x_1}^{\alpha_1}\partial_{x_2}^{\alpha_2}$.

We introduce the following functional spaces:
\[
H_0^1(\Omega)
:=
\{ v \in H^1(\Omega) : v = 0 \text{ on } \partial\Omega \},
\qquad
L_0^2(\Omega)
:=
\left\{ v \in L^2(\Omega) :  (v,1)_{\Omega} = 0 \right\}.
\]

Let $t \in (0,T]$ denote the time variable, with $T>0$ a final time.
We denote with $\partial_t$ the derivative with respect to the time variable.
For any Hilbert space $W$ endowed with the norm $\|\cdot\|_{W}$, we define the Bochner spaces $L^2(0,t;W)$ and $L^\infty(0,t;W)$ with norms
\[
\|\phi\|_{L^2(0,t;W)}
:= \left( \int_0^t \|\phi(\cdot, \sigma)\|_{W}^2\,d \sigma \right)^{1/2},
\qquad
\|\phi\|_{L^\infty(0,t;W)}
:= \operatorname*{ess\,sup}_{0 \le \sigma \le t} \|\phi(\cdot, \sigma)\|_{W}.
\]

Finally, given a scalar-valued space $W$, we denote by $\bW$ its vector-valued counterpart. Examples include $L^2(\O)$ and $\bL^2(\O)$.

\paragraph{Outline.} The remainder of this work is organized as follows. In Section~\ref{sec:cont_problem} the NS-CH model and its continuous weak formulation is introduced.  Section~\ref{sec:disc:problem} develops the conforming VE approximations and key spatial discretization properties. Subsequently, in Section~\ref{sec:discrete:formulations} we define the semi- and fully-discrete schemes and analyze their mass conservation and energy stability. Error analysis and convergence results are presented in Section~\ref{sec:error:analysis}. In Section~\ref{sec:numericalResults}, we provide some numerical details,
together with two numerical experiments that validate the theoretical results and illustrate the performance of the proposed scheme. Finally, in Section~\ref{sec:conclusions}, we provide concluding remarks and outline possible directions for future research.

\setcounter{equation}{0}
\section{The model problem and continuous weak formulation}\label{sec:cont_problem}
We consider a phase-field model for two immiscible and incompressible fluids with comparable densities, which are taken equal to one. The model is described by the following time-dependent coupled system; see, e.g., \cite{LS2003,Feng2006}
\begin{equation}\label{model_a}
	\left\{
	\begin{aligned}	
	\partial_t\bu - \nu \bDelta\bu + ( \bnabla \bu) \bu  +\nabla p + \lambda  \div (\nabla \varphi\otimes \nabla \varphi ) &= \bg   &\quad \qin  \O \times (0,T], \\
	\div \bu &= 0 &\quad \qin \O \times (0,T],\\
	\partial_t\varphi + \div (\varphi \bu)+ \gamma \Delta (\Delta  \varphi -\varepsilon^{-2} f(\varphi)) &= 0 &\quad \qin \O \times (0,T]. 
		\end{aligned} \right. 
	\end{equation}
Here $f(\varphi) = F'(\varphi)$, where $F(\varphi) = \frac{1}{4}(\varphi^2 - 1)^2$ and the positive constants $\nu$,   $\lambda$, $\gamma$, and $\varepsilon$ denote the fluid viscosity, the surface tension, the elastic relaxation time, and the capillary width, respectively and we assume the external load $\bg \in L^2(0,T; \bL^2(\Omega))$.
The system is complemented with the following  homogeneous boundary conditions
\begin{equation}\label{bou:data}
	\bu = \0,    \qquad \partial_{\bn} \varphi= \partial_{\bn} \Delta\varphi =0\,  \quad \qin  \Gamma \times (0,T] \,
\end{equation}
and initial conditions
\begin{equation}\label{initial:data}
\bu(\cdot, 0 ) = \bu_0, \qquad \varphi(\cdot, 0) = \varphi_0  \quad \:\: \quad\quad \qin  \O \,.
\end{equation}

In the equations~\eqref{model_a}-\eqref{initial:data}, the vector $\bu(\bx, t) \in \R^2$ and the scalar $p(\bx, t) \in \R$ denote the velocity and the pressure of the fluid mixture at the space-time point $(\bx, t)$, respectively, while the scalar function $\varphi$ denotes a phase function and is used to indicate the fluid phase. We note that on the boundary of the domain we impose a no-flux type condition for both $\varphi$ and $\Delta \varphi$.
Moreover, $\nabla \varphi \otimes \nabla \varphi$ stands for the $2 \times 2$ rank-one matrix $(\nabla \varphi)^T \nabla \varphi$ with entries $\varphi_{x_i} \varphi_{x_j}$. 

Now, by using some vector calculus identities and the incompressibility condition in \eqref{model_a}, we have 
\begin{equation*}
	\begin{split}
	 \div (\nabla \varphi\otimes \nabla \varphi ) &= \nabla \varphi \: \div(\nabla \varphi) + (\nabla (\nabla \varphi)) \nabla \varphi  	 = 	 \nabla \varphi \Delta \varphi  + \frac{1}{2} \nabla |\nabla \varphi|^2 \,,\\
	 \div (\varphi \bu) &= \varphi \: \div \bu + \bu \cdot \nabla  \varphi =  \bu \cdot \nabla  \varphi \,. 
	 	\end{split}
\end{equation*} 
Then, setting $\widehat{p}=p +\frac{\lambda}{2} |\nabla \varphi|^2$, the system~\eqref{model_a} is equivalent to the following problem
\begin{equation}\label{Mmodel_a}
	\left\{
	\begin{aligned}	
\partial_t \bu - \nu \bDelta\bu +   (\bnabla \bu) \bu  +\nabla \widehat{p} + \lambda  \nabla \varphi \Delta \varphi &= \bg   \quad \qin  \O \times (0,T], \\
	\div \bu &= 0 \quad \qin \O \times (0,T],\\
	\partial_t\varphi +\bu \cdot \nabla  \varphi+ \gamma \Delta (\Delta  \varphi -\varepsilon^{-2} f(\varphi)) &= 0 \quad \qin \O \times (0,T].
\end{aligned} \right. 
\end{equation}
Let us consider the following compact notation for the Sobolev spaces
\begin{equation*}
\bV := \bH^1_0(\O)\,, \qquad  Q := L_0^{2}(\O)\,,  \quad H = \{\phi \in H^2(\O): \partial_{\bn} \phi =0 \qon \Gamma  \}\,, 
\end{equation*}
representing the velocity field space,  the pressure space and the phase filed space respectively, endowed  with the norms $$\|\bv\|_{\bV} := |\bv|_{1,\O}, \quad  \|q\|_{Q} := \|q\|_{0,\O}, \quad
  \|\phi\|_{H} := |\phi|_{2,\O}   \quad \forall (\bv,q,\phi) \in \bV \times Q \times  H.$$
We define the following forms:
\begin{equation*}
	\begin{aligned}
m_{F}(\bu,\bv) &:= (\bu, \bv)_{\O}, &\qquad 
m(\varphi, \phi  ) &:=(\varphi , \phi)_{\O},\\ 		
a_{\bnabla}(\bu,\bv) &:= (\bnabla\bu, \bnabla \bv)_{\O}, &\qquad a_{\bD}(\varphi, \phi  )&:=(\bD^2 \varphi , \bD^2 \phi)_{\O}, 
\\ 
c_{F}(\bw;\bu,\bv) &:= \frac{1}{2}  ( (\bnabla \bu) \bw, \bv)_{\O} - \frac{1}{2}( (\bnabla \bv) \bw, \bu)_{\O}, 
&\qquad
c(\bu;\varphi,\phi) &:=\frac{1}{2} ( \bu \cdot \nabla \varphi, \phi)_{\O} - \frac{1}{2}( \bu \cdot \nabla \phi, \varphi)_{\O},
\\	
d(\varphi;\phi, \bv) &:= ( \Delta \varphi\nabla \phi, \bv)_{\O},  &\qquad 
r(\eta;\varphi,\phi) &:= ( f^{\prime}(\eta)\nabla \varphi, \nabla\phi)_{\O},
\\
b(\bv,q) &:= (q,\div \bv)_{\O},
\end{aligned}
\end{equation*}
for all $\bu,\bv,\bw \in \bV$,  $q \in Q$ and $\eta,\varphi,\phi \in H$.

Thus, multiplying the equations~\eqref{Mmodel_a}  by test functions $(\bv,  q,\phi) \in  \bV \times Q \times H$, integrating by parts, exploiting the boundary conditions in~\eqref{bou:data} and the incompressibility condition $\div \bu =0$, we obtain the following variational formulation: 	seek 
\[
\bu \in    L^{\infty}(0,T;\,  \bL^2(\Omega))  \cap L^{2}(0,T; \bV) \,, 
\quad
\widehat{p} \in  L^{2}(0,T; Q)\,,  
\quad
\varphi \in    L^{\infty}(0,T;\,  L^2(\Omega))  \cap L^{2}(0,T; H)
\]
such that for a.e. $t \in (0, T]$
\begin{equation}\label{form1}
	\left\{
	\begin{aligned}
		m_F(\partial_t\bu,\bv) + \nu a_{\bnabla}(\bu, \bv) +  c_{F}(\bu;\bu,\bv)  - b(\bv,\widehat{p})  + \lambda  d(\varphi;\varphi, \bv) &= (\bg,\bv)_{\O}   & \forall \bv \in \bV,\\ 
		b(\bu,q)  &= 0  & \forall q \in Q,\\
		m(\partial_t \varphi, \phi )+\gamma a_{\bD}(\varphi, \phi )  +c(\bu;\varphi,\phi)+ \gamma \varepsilon^{-2} r(\varphi;\varphi,\phi)  &= 0 & \forall \phi \in H,		
	\end{aligned}
	\right.
\end{equation}
coupled with the initial conditions in~\eqref{initial:data}.
 
Now, let us introduce the kernel 
\[
\bZ := \{ \bv \in  \bV : \div \bv =0 \qin \O\}\,.
\]
Moreover, we introduce the concept of the \textit{Helmholtz--Hodge projector} (see for instance \cite[Lemma 2.6]{div-free-SIAMreview}). For any $\bv \in \bL^2(\Omega)$, there exist $\bv_0 \in H(\text{div}; \Omega)$ and $\chi \in H^1(\Omega)/\mathbb{R}$, satisfying
\begin{equation}
	\bv = \bv_0 + \nabla \chi,
	\label{eq:HHdecomp}
\end{equation}
where $\bv_0$ is $L^2$-orthogonal to the gradients, i.e., 
$(\bv_0, \nabla \zeta) = 0$ for all $\zeta \in H^1(\Omega)$, which implies, in particular, that $\div \bv_0 = 0$.
The orthogonal decomposition \eqref{eq:HHdecomp} is unique and is called the \textit{Helmholtz--Hodge decomposition}, and $\mathcal{H}(\bv) := \bv_0$ is the \textit{Helmholtz--Hodge projector} of $\bv$.

Therefore, problem~\eqref{form1} can be reformulated in the following kernel formulation: seek 
\[
\bu \in   L^{\infty}(0,T;\,  \bL^2(\Omega))  \cap L^{2}(0,T; \bZ)\,, 
\quad 
\varphi \in  L^{\infty}(0,T;\,  L^2(\Omega))  \cap L^{2}(0,T; H) 
\]
such that for a.e. $t \in (0,T]$
\begin{equation}\label{form:kernel}
	\left\{
	\begin{aligned}
		m_F(\partial_t \bu,\bv) + \nu a_{\bnabla}(\bu, \bv) +  c_{F}(\bu;\bu,\bv)  + \lambda  d(\varphi;\varphi, \bv) &=(\mathcal{H}(\bg),\bv)_{\O}  &  \forall \bv \in \bZ,\\ 
		m(\partial_t \varphi, \phi )+\gamma a_{\bD}(\varphi, \phi  )  +c(\bu;\varphi,\phi)+ \gamma \varepsilon^{-2} r(\varphi;\varphi,\phi)  &= 0& \forall \phi \in H,	
	\end{aligned}
	\right.
\end{equation}
coupled with the initial conditions in~\eqref{initial:data}.

\subsection{A priori energy estimates}
In this subsection we will present some energy estimates satisfied by the solution of problem~\eqref{Mmodel_a}. Let us introduce the following notation~\cite{Feng2006}:
\begin{equation*}
\mathcal{J}_{\varepsilon,\lambda}(\bv,\phi) := \frac{1}{2}\|\bv\|_{0, \O}^2 + \frac{\lambda}{2} |\phi|_{1, \O}^2 +  \lambda\varepsilon^{-2} (F(\phi), 1)_{\O}.
\end{equation*}
For any $w \in H^{-1}(\Omega)$, we define $\Delta^{-1} w \in H^1_0(\Omega)$
	as the unique solution of
	\begin{equation}
	\label{eq:lapm1}
	(\nabla \Delta^{-1} w, \nabla v)_\Omega = -(w,v)_\Omega
	\qquad \forall v \in H^1_0(\Omega) \,.
	\end{equation}

Therefore for all $w \in H^{-1}(\Omega)$ we have
\[
\|w\|_{-1, \Omega} = \sup_{v \in H^1_0(\Omega)} \frac{(w, v)_\Omega}{\|\nabla v\|_\Omega}
= 
\sup_{v \in H^1_0(\Omega)} -\frac{(\nabla \Delta^{-1} w, \nabla v)}{\|\nabla v\|_\Omega} = \|\nabla \Delta^{-1} w\|_{\Omega} \,,
\]	
and thus
\begin{equation}
\label{eq:utile}
\|w\|_{-1, \Omega}^2 = -(w, \Delta^{-1} w)_{\Omega} \,.
\end{equation}
With the above notation, we state the following result.

\begin{proposition}[Continuous conservation laws]
\label{prop:cont:cons}
Suppose that the initial values  $(\bu_0, \varphi_0)$ in \eqref{initial:data} satisfy  
$\mathcal{J}_{\varepsilon,\lambda}(\bu_0,\varphi_0) < \infty $. Then, every regular solution $(\bu, \varphi)$ of problem~\eqref{Mmodel_a} satisfies the following estimates:
\begin{align}
&(\varphi(\bx, t),1)_{\O} = (\varphi_0(\bx),1)_{\O} \qquad \text{for a.e. $t \in (0,T)$,} \label{mass:conserv}\\ 
&\underset{0 \leq t \leq T}{\text{ess sup}} \,\, \mathcal{J}_{\varepsilon,\lambda}(\bu(\cdot, t),\varphi(\cdot, t)) \leq C_{{\rm stab}}, \label{energy:1} \\
&\int_{0}^T \nu |\bu|^2_{1,\O} + \int_{0}^T  \lambda \gamma^{-1} \|\varphi_t +\bu \cdot \nabla  \varphi\|^2_{-1,\O}  \leq C_{{\rm stab}}, \label{energy:2}
\end{align}
where $C_{{\rm stab}}$ is a positive constant depending on the data and physical parameters.
\end{proposition}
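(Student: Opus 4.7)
The proof rests on two identities: a scalar mass balance obtained by integrating the Cahn--Hilliard equation, and a coupled energy identity produced by testing the momentum and Cahn--Hilliard equations with their natural multipliers. The hard part will be producing an exact cancellation of the capillary coupling $\lambda d(\varphi;\varphi,\bu)$ between the two balances; this will force me to test the Cahn--Hilliard equation against the chemical potential $\mu := -\Delta\varphi + \varepsilon^{-2} f(\varphi)$, which is not a primary unknown of~\eqref{form1} and whose use is justified by the regularity assumed on the solution.

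I would first dispatch~\eqref{mass:conserv}. Integrating the third equation of~\eqref{Mmodel_a} over $\Omega$, the bi-Laplacian term vanishes by Green's formula together with the Neumann conditions $\partial_\bn \varphi = \partial_\bn \Delta\varphi = 0$, while $\int_\Omega \bu\cdot\nabla\varphi = -\int_\Omega \varphi \div\bu + \int_\Gamma \varphi\, \bu\cdot\bn = 0$ since $\div\bu = 0$ and $\bu = \0$ on $\Gamma$. Hence $\frac{d}{dt}(\varphi,1)_\Omega = 0$, which yields~\eqref{mass:conserv}.

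For~\eqref{energy:1} I would work on the kernel formulation~\eqref{form:kernel}. The chemical potential $\mu$ inherits $\partial_\bn \mu = 0$, and the Cahn--Hilliard equation rewrites as $\partial_t\varphi + \bu\cdot\nabla\varphi = \gamma \Delta \mu$. Testing the momentum equation with $\bv = \bu \in \bZ$ annihilates the convective trilinear form by skew-symmetry of $c_F$, and testing the rewritten Cahn--Hilliard equation against $\mu$, after two integrations by parts exploiting $\partial_\bn\varphi = \partial_\bn\mu = 0$, gives
\begin{equation*}
(\partial_t\varphi,\mu)_\Omega = \tfrac{1}{2}\tfrac{d}{dt}|\varphi|_{1,\Omega}^2 + \varepsilon^{-2}\tfrac{d}{dt}(F(\varphi),1)_\Omega, \qquad (\bu\cdot\nabla\varphi,\mu)_\Omega = -d(\varphi;\varphi,\bu),
\end{equation*}
the second identity exploiting $\int_\Omega \bu\cdot\nabla F(\varphi) = 0$ (which in turn uses $\div\bu=0$ and $\bu|_\Gamma = \0$). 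Summing the momentum identity with $\lambda$ times the Cahn--Hilliard identity makes the two $\lambda d(\varphi;\varphi,\bu)$ contributions cancel, yielding the energy law
\begin{equation*}
\frac{d}{dt}\mathcal{J}_{\varepsilon,\lambda}(\bu,\varphi) + \nu |\bu|_{1,\Omega}^2 + \lambda\gamma \|\nabla\mu\|_{0,\Omega}^2 = (\bg,\bu)_\Omega.
\end{equation*}
Estimating the right-hand side by Cauchy--Schwarz, Poincar\'e, and Young's inequalities (to absorb $\tfrac{\nu}{2}|\bu|_{1,\Omega}^2$ on the left) and then integrating in time yields~\eqref{energy:1} together with the velocity part of~\eqref{energy:2}.

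Finally, for the $H^{-1}$ contribution in~\eqref{energy:2}, I would reuse $\partial_t\varphi + \bu\cdot\nabla\varphi = \gamma \Delta\mu$: for any $v \in H^1_0(\Omega)$, integration by parts (with boundary term vanishing since $v|_\Gamma = 0$) produces $(\partial_t\varphi + \bu\cdot\nabla\varphi, v)_\Omega = -\gamma(\nabla\mu,\nabla v)_\Omega$; hence by~\eqref{eq:utile} we obtain $\|\partial_t\varphi + \bu\cdot\nabla\varphi\|_{-1,\Omega} \le \gamma\|\nabla\mu\|_{0,\Omega}$. Squaring, multiplying by $\lambda\gamma^{-1}$, and combining with the time-integrated energy bound already established closes the argument.
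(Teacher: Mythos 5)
Your proof is correct, and the mass-conservation and momentum parts coincide with the paper's argument. The genuine difference lies in the multiplier chosen for the Cahn--Hilliard equation in the energy identity: the paper tests with $-\lambda\gamma^{-1}\Delta^{-1}(\partial_t\varphi+\bu\cdot\nabla\varphi)$, so that the dissipation term $\lambda\gamma^{-1}\|\partial_t\varphi+\bu\cdot\nabla\varphi\|^2_{-1,\O}$ appears directly in the energy law \eqref{energy:law} --- exactly the quantity required in \eqref{energy:2} --- whereas you test with the chemical potential $\mu=-\Delta\varphi+\varepsilon^{-2}f(\varphi)$, obtain $\lambda\gamma\|\nabla\mu\|^2_{0,\O}$ as dissipation, and recover \eqref{energy:2} a posteriori via the duality bound $\|\partial_t\varphi+\bu\cdot\nabla\varphi\|_{-1,\O}\le\gamma\|\nabla\mu\|_{0,\O}$. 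Both routes rest on the same exact cancellation of $\lambda\, d(\varphi;\varphi,\bu)$ and the same identities $\nabla F(\varphi)=f(\varphi)\nabla\varphi$ and $\partial_t F(\varphi)=f(\varphi)\partial_t\varphi$. Your version is the more classical one (it is essentially the argument of Feng's paper and delivers the marginally stronger $L^2(0,T)$ control of $\|\nabla\mu\|_{0,\O}$), at the cost of one extra step and of invoking $\mu$, which the paper deliberately avoids as a variable; the paper's multiplier is tailored to produce the $H^{-1}$ seminorm of \eqref{energy:2} without detour. A cosmetic remark: your right-hand side is $(\bg,\bu)_{\O}$ while the paper writes $(\mathcal{H}(\bg),\bu)_{\O}$; the two agree since $\bu$ is divergence-free and vanishes on $\Gamma$, so nothing is lost.
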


\begin{proof}
The mass conservation property~\eqref{mass:conserv} is obtained by integrating by parts the third equation in \eqref{Mmodel_a}, exploiting the boundary conditions in~\eqref{bou:data} and the incompressibility condition $\div \bu =0$,
and then integrating over the interval $(0,t)$.

Now, in order to obtain bound~\eqref{energy:1}, we proceed as follows. We preliminary consider the following identities:
\begin{equation}
\label{eq:utileF}
\nabla F(\varphi) = \nabla \varphi \, f(\varphi) \,,
\qquad 
\partial_t F(\varphi) = \partial_t \varphi \, f(\varphi) \,.
\end{equation}
From the momentum equation~\eqref{Mmodel_a}, and using the first identity above, we have 
\begin{equation*}
\partial_t \bu - \nu \Delta\bu +   (\bnabla \bu) \bu  +\nabla (\widehat{p}+ \lambda\varepsilon^{-2} F(\varphi)) + \lambda  \nabla \varphi \big(\Delta \varphi- {\varepsilon^{-2}}f(\varphi) \big) = \bg. 
\end{equation*}	
Next, testing by $\bu$, integrating by parts, using that $\div \bu =0$ and the boundary conditions \eqref{bou:data}, we  infer
\begin{equation}\label{Eq:velo}
\frac{1}{2} \partial_t\|\bu\|_{0,\O}^2 + \nu |\bu|^2_{1,\O}  + \lambda  \big( \Delta \varphi- {\varepsilon^{-2}}f(\varphi), \bu \cdot \nabla \varphi \big)_{\O}= (\mathcal{H}({\bg}), \bu)_{\O}. 
\end{equation}	
On the other hand, multiplying the third equation of~\eqref{Mmodel_a} by $-\lambda \gamma^{-1} \Delta^{-1}(\partial_t \varphi+\bu \cdot \nabla  \varphi)$, integrating by parts, using \eqref{eq:utile} and \eqref{eq:lapm1}, and employing  the second identity in \eqref{eq:utileF}, we have 
\begin{equation}\label{Eq:Cahn}
\begin{aligned}
0 &=   -\lambda \gamma^{-1}  \big( \partial_t \varphi +\bu \cdot \nabla  \varphi,  \Delta^{-1}(\partial_t \varphi +\bu \cdot \nabla  \varphi) \big)_{\O}- \lambda \big( \Delta (\Delta  \varphi -\varepsilon^{-2} f(\varphi)),  \Delta^{-1}(\partial_t \varphi +\bu \cdot \nabla  \varphi)  \big)_{\O} \\
&= \lambda \gamma^{-1} \| \partial_t \varphi +\bu \cdot \nabla  \varphi\|^2_{-1, \O} - \lambda  \big( \Delta  \varphi -\varepsilon^{-2} f(\varphi),  \partial_t \varphi +\bu \cdot \nabla  \varphi  \big)_{\O}\\
&= 
\lambda \gamma^{-1}  \| \partial_t \varphi +\bu \cdot \nabla  \varphi\|^2_{-1, \Omega} + \frac{\lambda}{2} \partial_t |\varphi|^2_{1,\O}
+\lambda \varepsilon^{-2} \partial_t (F(\varphi), 1)_{\O}  - \lambda  \big( \Delta \varphi- {\varepsilon^{-2}}f(\varphi), \bu \cdot \nabla \varphi \big)_{\O}.
\end{aligned}
\end{equation}
Now, adding equations~\eqref{Eq:velo} and \eqref{Eq:Cahn} we obtain the following  basic energy law
\begin{equation}\label{energy:law}
\partial_t \mathcal{J}_{\varepsilon,\lambda}(\bu, \varphi) + \nu |\bu|^2_{1,\O}  +\lambda \gamma^{-1}  \| \varphi_t +\bu \cdot \nabla  \varphi\|^2_{-1, \Omega} 
=(\mathcal{H}(\bg), \bu)_{\O}.
\end{equation}
Thus, bounds \eqref{energy:1} and \eqref{energy:2}, follows from the identity \eqref{energy:law}, Young and Gr\"{o}nwall inequalities (cf. Lemma \ref{cont:gronwall}).
\end{proof}

We refer the reader to~\cite{LS2003} (see also~\cite{Chupin2003,FY-mathComp2007,Feng2006}) for the existence and uniqueness analysis of the weak solution to problem~\eqref{Mmodel_a}.
In what follows, we will assume the following regularity condition on the  phase solution $\varphi$ of problem~\eqref{form1}.

\begin{assumption}
[Regularity assumption on the continuous phase solution]
\label{Assump:cont:phase}
The solution $\varphi$ of  problem \eqref{form:kernel} satisfies for a.e. $t \in (0,T]$,
	\[
	\|\varphi(\cdot, t)\|_{W^{2,\infty}(\O)} \leq C_T,
	\] 
for a constant $C_T$ depending on the final time $T$. 
\end{assumption}

We conclude this section with the following two remarks.

\begin{remark}
	$W^{s,\infty}$-regularity assumptions of the type stated in Assumption~\ref{Assump:cont:phase} are commonly employed in the numerical analysis of phase-field models. In particular, analyses based on mixed formulations typically require boundedness in weaker norms, such as
	$\|\varphi(\cdot,t)\|_{L^{\infty}(\Omega)}$; see, for instance,~\cite{KSW2008}. On the other hand, for Cahn--Hilliard-type problems treated within a primal formulation, $W^{1,\infty}$-regularity assumptions have also been considered in the literature (see, e.g.,~\cite{EF89}).
	
	In the present work, the analysis is conducted within a primal (for the phase-field equation) and fully coupled framework. As a consequence, stronger regularity requirements on the phase variable naturally arise and are necessary to ensure a consistent treatment of the coupling terms appearing in the model. To the best of our knowledge, the explicit use of a $W^{2,\infty}$-regularity assumption in this context has not been previously addressed in numerical analyses, where weaker norms are typically sufficient due to the adoption of mixed formulations.
	
	This assumption will be instrumental in deriving the error estimates; see Lemmas~\ref{lemma:crimen:d-B} and~\ref{lemma:momentum:error}.
\end{remark}

\begin{remark}\label{Remark:mass:conserv}
The relations~\eqref{mass:conserv} and~\eqref{energy:law} are known  as the mass conservation law and the basic energy law for the system \eqref{Mmodel_a}, respectively. Moreover, it is worth highlighting that the formulation \eqref{Mmodel_a} also allows the derivation of an energy bound of the form~\eqref{form1}, similar to the one derived in mixed formulations (see, e.g., estimate (3.3) in~\cite{Feng2006}).

We emphasize that designing a discrete scheme that preserves all these properties is not straightforward. In this work, we propose a novel discrete convective term for the phase-field equation, specifically designed to ensure mass conservation and a form of energy bound. Preserving the discrete energy law remains a challenging task and an interesting direction for future research (see Remark~\ref{remark:properties} below).
\end{remark}

\setcounter{equation}{0}
\section{Conforming virtual element approximation}\label{sec:disc:problem}
In this section we shall introduce the div-free and  $C^1$-conforming VEMs,
of arbitrary order $k \geq 1$ and $\ell\geq 2$,  for the velocity  and phase field variables, respectively.  
We start by recalling notation and basic setting for the virtual element approach in two dimensions. By combining the advancements that will be presented here and the construction in~\cite{BDM3AS2020,BDRCAMWA2020} it is possible the  design of the three dimensional case for the NS-CH system \eqref{form1}.

\subsection{Polygonal decomposition and polynomial projections}
\label{subsec:meshassump}
Henceforth, $\E$ denotes a generic polygon, while $h_\E$, $|\E|$, and $\bx_{\E}$ denote its diameter, area, and barycenter, respectively.
We define $\bn_{\E}$ as the unit outward normal vector to $\E$ and $\bt_{\E}$ as the unit tangent vector to $\partial \E$.
Moreover, we denote by $e$ a generic edge of $\partial \E$ with length $h_e$, and we use the notation $\bn_e$ and $\bt_e$ for the unit normal and tangential vectors to the edge $e$, respectively.

Let $\fOh$ be a family of decompositions of $\O$ into non-overlapping polygons $\E$, where  $h:=\max_{\E\in\Oh} h_\E.$ 
For the theoretical analysis, we suppose that $\Oh$ satisfies the following assumption.

\begin{assumption}[Assumption on the mesh]\label{Assump:mesh}
There exists a  positive constant $\rho>0$ such that for any $\E\in \Oh$ it holds: (i) $\E$ is star-shaped with respect to every point of a  ball
of radius  $ \ge  \rho h_\E$; (ii) any  edge $e \subset \partial \E$ has length $h_e\ge \rho h_\E$.
\end{assumption}

For any $\mD \subset \R^2$ and any integer $n\geq 0$, we introduce the following spaces: $\P_{n}(\mD)$ as the space of
polynomials of degree  up to $n$ defined on $\mD$ and we denote by $\bP_{n}(\mD)$ 
its vectorial version, i.e.,  $\bP_{n}(\mD):=[\P_{n}(\mD)]^2$. However, for tensors we will keep $[\P_{n}(\mD)]^{2 \times 2}$.  A natural basis associated with the space $\P_n(E)$ is the set of normalized
monomials: $	\M_n(E) := \left\{ 
m_{\boldsymbol{\alpha}},
\,\,\,  \text{with} \,\,\,
|\boldsymbol{\alpha}| \leq n
\right\}$
where, for any multi-index $\boldsymbol{\alpha} = (\alpha_1, \alpha_2) \in \N_0^2$ 
{\small
\begin{equation*}
m_{\boldsymbol{\alpha}} :=
\prod_{i=1}^2  
\left(\frac{x_i - x_{E, i}}{h_{E}} \right)^{\alpha_i}
\qquad \text{and} \qquad
|\boldsymbol{\alpha}|:= \sum_{i=1}^2 \alpha_i \,.
\end{equation*}}
Moreover for any $m \leq n$, we denote with $\P_{n \setminus m}(E) = {\rm span}  
\left\{ 
m_{\boldsymbol{\alpha}}: m +1 \leq |\boldsymbol{\alpha}| \leq n
\right\}.$

Furthermore for any $n \in \N$ and for any $\E \in \O_h$,   we define the following polynomial projections:
\begin{itemize}
	\item the $L^2$\textbf{-projection} $\Pi_{\E}^{0,n} \colon L^2(E) \to \P_n(\E)$, given by
	\begin{equation*}
	( v - \, {\Pi}_{\E}^{0,n}  v, q_n)_{\E} = 0 \qquad  \text{for all $q_n \in \P_n(\E)$,  for all $v \in L^2(\E)$.} 
	\end{equation*} 
	
Analogously, we define the extension for vector functions $\boldsymbol{\Pi}_{\E}^{0,n} \colon \bL^2(E) \to \bP_n(E)$ and 
tensor functions $\boldsymbol{\Pi}_{\E}^{0,n}  \colon [L^2(\E)]^{2\times 2} \to [\P_n(E)]^{2\times 2}$;
	
	\item the $H^1$\textbf{-seminorm projection} $\PinnablaKb  \colon \bH^1(E) \to \bP_n(\E)$, defined by 
	\begin{equation*}\label{eq:Pinabla}
		\left\{
		\begin{aligned}
			  (\bnabla( \bv - \, \PinnablaKb  \bv), \bnabla  \bq_n)_{E} &= 0 \quad  \text{for all  $\bq_n \in \bP_n(\E)$,} \\
			 (\bv - \,  \PinnablaKb \bv,{\bf 1})_{\partial \E} &= 0 \, ,
		\end{aligned}
		\right.
		\quad  \text{for all $\bv \in \bH^1(\E)$,}
	\end{equation*}

	\item the $H^2$\textbf{-seminorm projection} $\Pi_{\E}^{\bD,n}  \colon H^2(E) \to \P_n(\E)$, defined by 
\begin{equation*}\label{eq:PiD}
	\left\{
\begin{aligned}
	(\bD^2( v - \, \Pi_{\E}^{\bD,n}   v), \bD^2 q_{n})_{\E}  &= 0 \quad  \text{for all $q_n \in \P_n(\E)$,} \\
	(v - \, \Pi_{\E}^{\bD,n} v,1 )_{\partial \E} &= 0 \\
	(\nabla (v -\Pi_{\E}^{\bD,n} v),{\bf 1})_{\partial \E}&=\0 \,,
\end{aligned}
	\right.
	\quad \text{for all $v \in H^2(\E)$.}
\end{equation*}
\end{itemize}

In the following $C$ will denote a generic positive constant, independent of the mesh size $h$ and the time step size $\tau$ (cf. Subsection \ref{subsec:fully}), and possibly depending on $\Omega$, the final time $T$,  
the order  $(k, \ell)$ of the method (cf. Subsection \ref{subsec:space:velo-pressure} and Subsection \ref{subsec:space:phase}), and the mesh regularity constant $\rho$ in Assumption \ref{Assump:mesh}.
We explicitly track the dependence of $C$ on 
the problem data  $\nu$, $\lambda$, $\gamma$, $\varepsilon$, $\bg$, $\bu_0$ and $\varphi_0$, 
the problem solution $(\bu, \widehat{p}, \varphi)$, 
the constants $C_T$, $\widetilde{C}_T$,  $\widehat{C}_T$, $C_{\O}$ in Assumptions \ref{Assump:cont:phase},  \ref{Assump:disc:phase}, \ref{Assump:f-disc:phase} and \ref{Assump:reg:aux}.  
We use the notation $\lesssim$ (resp. $\approx$) to indicate a bound (resp. equivalence) up to a multiplicative factor $C$.

We now recall the following polynomial approximation result (see, e.g.,~\cite{Brenner-Scott-Book}).

\begin{proposition}[Bramble-Hilbert]\label{approx:prop:L2}
Let Assumption \ref{Assump:mesh} hold.  
Let $n \in \N$ and let $0 \leq r \leq s \leq n+1$, and $1 \leq p,q \leq \infty$ such that $s - 2/q \ge r - 2/p$.
Then, for any $E \in \Oh$ and for each $w \in W^{s,q}(E)$ the following hold
\[
|w - \Pi^{0,n}_E w|_{W^{r,p}(E)} \lesssim
h_E^{s-r + 2/p - 2/q} \,|w|_{W^{s,q}(E)}
\,.
\] 	
\end{proposition}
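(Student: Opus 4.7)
The plan is to proceed by a standard scaling argument combined with the classical Bramble--Hilbert lemma on star-shaped domains, bridging the $L^p$ and $L^q$ norms through the Sobolev embedding encoded in the hypothesis $s - 2/q \ge r - 2/p$. The overall strategy has three steps: reduce to a reference configuration, establish the estimate there by polynomial invariance and continuity, and rescale.

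First, under Assumption~\ref{Assump:mesh}, the polygon $E$ is star-shaped with respect to a ball of radius $\ge \rho h_E$. Introduce the affine change of variables $\hat{\bx} = (\bx - \bx_E)/h_E$, which maps $E$ to a domain $\hat{E}$ of unit diameter still star-shaped with respect to a ball of radius $\ge \rho$, and set $\hat{w}(\hat{\bx}) = w(\bx)$. Since the Jacobian is constant and $L^2$-orthogonality is preserved by affine pullback, the $L^2$-projection commutes with the map, i.e. $\widehat{\Pi_E^{0,n} w} = \Pi_{\hat E}^{0,n} \hat{w}$. The standard scaling relation
\[
|\hat{v}|_{W^{m,p}(\hat E)} \approx h_E^{m - 2/p}\, |v|_{W^{m,p}(E)}
\]
reduces the claim to showing the scale-free bound
\[
|\hat{w} - \Pi_{\hat E}^{0,n} \hat{w}|_{W^{r,p}(\hat E)} \lesssim |\hat{w}|_{W^{s,q}(\hat E)},
\]
with a constant depending only on $\rho$, $n$, and the exponents.

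Second, on $\hat{E}$ the hypothesis $s - 2/q \ge r - 2/p$ yields the continuous embedding $W^{s,q}(\hat E) \hookrightarrow W^{r,p}(\hat E)$ (as $s - 2/q \ge r - 2/p$ and $s \ge r$). The linear functional $\hat{w} \mapsto |\hat{w} - \Pi_{\hat E}^{0,n} \hat{w}|_{W^{r,p}(\hat E)}$ is continuous on $W^{s,q}(\hat E)$: indeed, it is bounded by $\|\hat{w}\|_{W^{r,p}(\hat E)} + \|\Pi_{\hat E}^{0,n} \hat{w}\|_{W^{r,p}(\hat E)}$, the first term is controlled by the Sobolev embedding, and the second by combining the $L^2$-stability of $\Pi_{\hat E}^{0,n}$ with an inverse inequality on the finite-dimensional polynomial space $\mathbb{P}_n(\hat E)$ (using again the embedding $L^q(\hat E) \hookrightarrow L^2(\hat E)$ when $q < 2$ or the trivial inclusion otherwise). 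Moreover, this functional vanishes identically on $\mathbb{P}_n(\hat E)$. The Dupont--Scott averaged Taylor polynomial on star-shaped domains then furnishes, for every $\hat{w} \in W^{s,q}(\hat E)$, a polynomial $q^{\star} \in \mathbb{P}_n(\hat E)$ with $|\hat{w} - q^{\star}|_{W^{s,q}(\hat E)}$ under control; applying the continuity estimate to $\hat{w} - q^{\star}$ and using the polynomial invariance gives the claimed bound.

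Finally, combining the two displays yields the asserted estimate with the exponent $s - r + 2/p - 2/q$ on $h_E$. The main technical difficulty is ensuring that the constants on the reference element are uniform over all admissible polygonal shapes; this is exactly what Assumption~\ref{Assump:mesh}(i) guarantees, since it allows us to cover $\hat{E}$ by a ball of fixed radius with respect to which the Dupont--Scott machinery applies with a universal constant. The delicate interplay between $p$ and $q$ is handled purely by Sobolev embedding, which is made consistent precisely by the scaling exponent $s - r + 2/p - 2/q \ge 0$ appearing on the right-hand side.
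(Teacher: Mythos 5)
The paper does not actually prove this proposition — it recalls it from the cited reference (Brenner--Scott) — and your scaling argument combined with the Bramble--Hilbert/Deny--Lions lemma and the Sobolev embedding $W^{s,q}(\hat E)\hookrightarrow W^{r,p}(\hat E)$ is precisely the standard proof behind that citation, with the exponent $s-r+2/p-2/q$ emerging correctly from the two scaling factors. One parenthetical is garbled: the claimed embedding $L^q(\hat E)\hookrightarrow L^2(\hat E)$ for $q<2$ is false (on a bounded domain the inclusion goes the other way); what you actually need at that point is $W^{s,q}(\hat E)\hookrightarrow L^2(\hat E)$, which is the condition ensuring $\Pi^{0,n}_{\hat E}\hat w$ is well defined in the first place and holds in all cases where the statement is meaningful.
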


\subsection{Virtual element spaces for the velocities and pressures}\label{subsec:space:velo-pressure}
In this  subsection we outline an overview of the divergence-free VE spaces for the velocity and pressure variable of the NS equation~\cite{ABMV-SINUM2016, BLV-SINUM2018}. For each polygonal element $\E \in \O_h$ and any $k \geq 1$, we consider the index
 $\widehat{k} = \max\{k, 2\}$ and the following ``enhanced'' virtual space 
\begin{equation}\label{eq:localVirtual-Velo}
	\begin{aligned}
\VK := \biggl\{  
		\bv_h \in \bC^0(\overline{E})  \: \: \colon \: \:
		(i)& \,\,  
		\bDelta    \bv_h  +  \nabla s \in \bx^{\perp} \P_{k-1}(E), 
		\,\,\text{ for some $s \in L_0^2(E)$,} 
		\\
		(ii)&  
		\,\,   \div \, \bv_h \in \P_{k-1}(E) \,, 
		\\
		(iii) &		\,\,  \bv_h|_e \cdot \bt_e \in \P_k(e), \:\: \bv_h|_e \cdot \bn_e \in  \P_{\widehat{k}}(e)\,\,\, \, \forall e \in \dE, 
		\\
		(iv) &
		\,\,   (\bv_h - \PinablaKb\bv_h, \, \bx^{\perp} \, \widehat{p}_{k-1} )_E = 0
		\,\,\,\, \text{$\forall \widehat{p}_{k-1} \in \widehat{\P}_{k-1 \setminus k-3}(E)$}
		\,\,\biggr\} \,,
	\end{aligned}
\end{equation}
where $\bx^{\perp} = (x_2, -x_1)$. Now, we introduce following linear operators $\boldsymbol{D_{V}}$ for the space $\VK$: 
\begin{itemize}
	\item[$\boldsymbol{D_{V}1}$:] the values of $\bv_h(\ba_i)$  for all vertex $\ba_i$ of the polygon $\E$;
	\item[$\boldsymbol{D_{V}2}$:] the values of $\bv_h \cdot \bn_e$ and $\bv_h \cdot \bt_e$ at $k - 1$ and $\widehat{k} - 1$ internal distinct points, respectively, on each edge $e \in \partial E$;
	\item[$\boldsymbol{D_{V}3}$:] for $k \geq 3$, the moments of $\bv_h$ 
	$$
	|\E|^{-1}  (\bv_h \cdot \boldsymbol{m}^{\perp} , m_{\boldsymbol{\alpha}})_{\E} 
	\quad 
	\text{$\forall m_{\boldsymbol{\alpha}} \in \M_{k-3}(E)$,}
	$$
	where $\boldsymbol{m}^{\perp} := h^{-1}_E (\bx^{\perp}- \bx^{\perp}_{\E})$;
	\item[$\boldsymbol{D_{V}4}$:] for $k \geq 2$, the moments of $\div  \bv_h$ 
	$$
	 h_{\E}^{-1}  (\div \bv_h , m_{\boldsymbol{\alpha}})_{\E}  
	\quad \text{$\forall  m_{\boldsymbol{\alpha}} \in \M_{k-1}(E)$ with $|\boldsymbol{\alpha}| > 0$.}
	$$
\end{itemize}

We here summarize the main properties of the space $\VK$
(we refer to \cite{BLV-SINUM2018,AVV-IMAJNA2023} for a deeper analysis).
	\begin{itemize}
		\item [\textbf{(P1)}] \textbf{Polynomial inclusion:} $\bP_k(E) \subseteq \VK$;
		\item [\textbf{(P2)}] \textbf{Unisolvency:} the triplet $\big(\E,\: \VK, \: \boldsymbol{D_{V}} \big)$ is a finite element in the sense of Ciarlet;
		\item  [\textbf{(P3)}] \textbf{Polynomial projections:}
		the DoFs $\boldsymbol{D_{V}}$ allow us to compute the following linear operators:
		\[
\PikKb  \colon \VK \to \bP_{k}(\E)\,, \quad
\PikMoneKb  \colon \boldsymbol{\nabla} \VK \to [\P_{k-1}(\E)]^{2 \times 2}\,, \quad 
\iota \colon \div \VK \to \P_{k-1}(\E) \,.
		\] 
	\end{itemize}

The global velocity space is defined by gluing the local spaces and using the boundary conditions, with the obvious
associated set of global DoFs:
\begin{equation*}
	\Vh:= \big\{\bv_h \in \bV \colon \: {\bv_h}|_{E} \in \VK \quad \text{for all $\E \in \Omega_h$} \big\}\,.
\end{equation*}
The discrete  space for the pressures is given by the piecewise polynomial functions of degree $k-1$, i.e.,
\begin{equation}
	\label{eq:global:pressure}
\Qh := \big\{q_h \in Q \colon \: {q_h}|_{{\E}} \in \P_{k-1}(\E) \quad \text{for all $\E \in \Omega_h$} \big\}\,.
\end{equation}
The pair of spaces $(\Vh, \, \Qh)$ is inf-sup stable~\cite{BLV-SINUM2018}, i.e., there exists $\widetilde{\beta}>0$ such that
\begin{equation}\label{eq:infsup}
	\sup_{\bv_h \in \Vh \setminus \{\0\}} \frac{b(\bv_h, q_h)}{\Vert \bv_h\Vert_{\bV}} \geq \widetilde{\beta} \|q_h\|_{Q}
	\qquad \text{for any $q_h \in \Qh$.}
\end{equation}
Next, let us introduce the discrete kernel
\begin{equation*}
	\Zh := \big\{ \bv_h \in \Vh  \: \colon \:   b(\bv_h, q_h) = 0 \quad \text{for all $q_h \in \Qh$} \big\},
\end{equation*}
then recalling $(ii)$ in the definition of the space $\VK$~\eqref{eq:localVirtual-Velo} and the definition \eqref{eq:global:pressure}, the following kernel inclusion holds $	\Zh \subseteq \bZ$. Therefore, the functions belonging to the discrete kernel are exactly divergence-free.

\subsection{Virtual element space for the phase variable}\label{subsec:space:phase}
In this subsection, we outline an overview  of the \emph{enhanced} $C^1$-conforming VE space~\cite{BM-CMAME2013,CM-CAMWA2016,ABSV-SINUM2016}. For every polygon $\E\in\O_h$  and  any integer $\ell\ge 2$, we consider the index $\widehat{\ell}:=\max\{\ell,3\}$ 
and the following finite  dimensional space

\begin{equation}
	\label{eq:localVirtial-phase}
	\begin{aligned}
\HK
 := \biggl\{ \phi_h\in C^1(\overline{\E})  \: \colon \: 
(i)& \,\,  \Delta^2\phi_h\in\P_{\ell}(\E) \: , \\
(ii) &		\,\,  {\phi_h}|_{e} \in \P_{\widehat{\ell}}(e) \quad  \text{and} \quad \partial_{\bn_e} \phi_h \in\P_{\ell-1}(e)  \,\,\, \, \forall e \in \dE \: , 
\\
(iii) & \,\,   \big(\phi_h-\PiK \phi_h, \, \widehat{q}_{\ell} \big)_{\E}=0 \,\,\,\,  \text{$\forall \widehat{q}_{\ell} \in \widehat{\P}_{\ell \setminus \ell-4}(E)$}
\,\,\biggr\} \,.
\end{aligned}
\end{equation}
Next, for  $\phi_{h}\in \HK$, we introduce the following set
of linear operators $\boldsymbol{D_{H}}$:
\begin{itemize}
	\item $\DXu:$ the values of $\phi_{h}(\ba_i)$ and $h_{\ba_i}\nabla \phi_{h}(\ba_i)$  for all vertex $\ba_i$ of the polygon $\E$, where $h_{\ba_i}$ denotes the average of the diameters of all surrounding elements with $\ba_i$ as vertex;
    \item $\DXd:$ for $\ell \ge 3$, the values of $h_e\partial_{\bn_{e}}\phi_h$ at $\ell-2$ distinct internal points of every edge $e \subset \partial \E$;
  \item $\DXt:$ for $\ell \ge 4$, the values of $\phi_h$ at $\ell-3$ distinct internal points of every edge $e \subset \partial \E$;
	\item $\DXc:$ for $\ell \geq 4$, the  internal moments:
	$$ |E|^{-1} (m_{\boldsymbol{\alpha}},  \phi_h)_{0,\E}\,
	\quad \forall m_{\boldsymbol{\alpha}} \in \M_{\ell-4}(\E).$$
\end{itemize}
We now summarize the main properties of the space $\HK$. We refer to \cite{CM-CAMWA2016,ASVV-2022} for further details.
\begin{itemize}
	\item [\textbf{(P4)}] \textbf{Polynomial inclusion:} $\P_{\ell}(E) \subseteq \HK$;
	\item [\textbf{(P5)}] \textbf{Unisolvency:} the triplet $\big(\E,\: \HK, \: \boldsymbol{D_{H}} \big)$ is a finite element in the sense of Ciarlet;
	\item  [\textbf{(P6)}] \textbf{Polynomial projections:}
	the DoFs $\boldsymbol{D_{H}}$ allow us to compute the following linear operators:
	\[
	\begin{aligned}
		\boldsymbol{\Pi}^{0,\ell-2}_{\E} &: \bD^2 \HK \rightarrow [\P_{\ell-2}(\E)]^{2 \times 2}, \qquad	\Pi^{0,\ell-2}_{\E} : \Delta \HK \rightarrow \P_{\ell-2}(\E), \\
		\boldsymbol{\Pi}^{0,\ell-1}_{\E} &: \nabla \HK \rightarrow \bP_{\ell-1}(\E), \qquad\qquad \:	\Pi^{0,\ell}_{\E}   : \HK \rightarrow \P_{\ell}(\E).
	\end{aligned}
	\]
\end{itemize}

Now, for every decomposition $\O_h$ of $\O$ into polygons $\E$ and for any $\ell \geq 2$, we define
the global virtual space to  the numerical approximation of the phase field, as follows:
\begin{equation*}
	\Hh:=\big\{\phi_h\in H  \: \colon \:  \: \phi_h|_{\E}\in \HK  \quad \text{for all $\E \in \Omega_h$} \big\}.
\end{equation*}

\subsection{Virtual element forms}\label{subsec:virtual:forms}
In this subsection we introduce discrete approximations of the continuous forms defined in Section~\ref{sec:cont_problem}, that are computable 
which are computable using only the  DoFs $\boldsymbol{D_{V}}$ and $\boldsymbol{D_{H}}$  (cf. property \textbf{(P3)} and Property \textbf{(P6)}).
We begin with the construction of the VEM stabilizing forms.

\paragraph{Stabilizing forms} 
We consider arbitrary symmetric positive definite bilinear forms 
\[
\begin{aligned}
s_{0,F}^{\E}(\cdot, \cdot) \,, 
s_{\bnabla}^{\E}(\cdot, \cdot) &\colon 
\VK \times \VK \to \R \,,  &\qquad
s_0^{\E}(\cdot, \cdot) \,, s_{\bD}^{\E}(\cdot, \cdot) &\colon 
\HK \times \HK \to \R
\end{aligned}
\]
satisfying  the following equivalences
\begin{equation}\label{term:stab:SK}
\begin{aligned}
s_{0,F}^{\E}(\bv_h,\bv_h)  &\approx (\bv_h,\bv_h)_E  
& \qquad
s_{\bnabla}^{\E}(\bv_h,\bv_h)  &\approx 
 (\bnabla \bv_h,\bnabla \bv_h)_E 
& \qquad \forall \bv_h \in  \mathrm{Ker}(\PikKb)\,,
\\
s_{0}^{\E}(\phi_h,\phi_h)   &\approx (\phi_h,\phi_h)_E 
& \qquad
s_{\bD}^{\E}(\phi_h,\phi_h) &\approx 
 (\bD^2 \phi_h, \bD^2 \phi_h)_E  
& \qquad \forall \phi_h \in  \mathrm{Ker}(\PikK)\,.
\end{aligned}
\end{equation}
Many examples of stabilizing forms can be found in the VEM literature.
Under  Assumption \ref{Assump:mesh}, we choose the forms as the Euclidean scalar product associated with the DoFs, scaled by  suitable scaling factors. We refer to \cite{BLV-SINUM2018,ASVV-2022,AVV-IMAJNA2023}, for further details.

Next, by using the above stabilizing forms we proceed to define the first-, second- and fourth-order bilinear forms as follows: 

\paragraph{Discrete first-order forms} 
\begin{align*}
m_{F,h}^{\E}(\bu_h,\bv_h)&:=
 \big(\PikKb \bu_h,\PikKb \bv_h\big)_E +s^{\E}_{0,F}\big(({\rm I}-\PikKb) \bu_h,({\rm I}-\PikKb) \bv_h\big)\,, \\
m_{h}^{\E}(\varphi_h,\phi_h)&:=
 \big(\PikK \varphi_h,\PikK \phi_h\big)_E +s_0^{\E}\big(({\rm I}-\PikK) \varphi_h,({\rm I}-\PikK) \phi_h\big)\,. 
\end{align*}

\paragraph{Discrete second- and fourth-order forms:} 
\begin{align*}
	a_{\bnabla,h}^{\E}(\bu_h,\bv_h)&:=
	\big( \PikMoneKb  \bnabla \bu_h,\PikMoneKb  \bnabla  \bv_h \big)_{\E} + s^{\E}_{\bnabla}\big((\boldsymbol{{\rm I}}-\PinablaKb) \bu_h,(\boldsymbol{{\rm I}}-\PinablaKb) \bv_h\big)\,,
	\\ 
	a_{\bD,h}^{\E}(\varphi_h,\phi_h)&:=
 \big(	\boldsymbol{\Pi}^{0,\ell-2}_{\E}\bD^2 \varphi_h, 	\boldsymbol{\Pi}^{0,\ell-2}_{\E}\bD^2 \phi_h\big)_{\E} +s_{\bD}^{\E}\big(({\rm I}-\PiK) \varphi_h,({\rm I}-\PiK) \phi_h\big)\,.	
\end{align*}
Next, we present the construction of trilinear and semi-linear forms.
\paragraph{Discrete convective forms:} 
\begin{align*}%
\widetilde{c}_{F,h}^{\E}(\bw_h; \, \bu_h, \bv_h) &:= 
 \bigl((\PikMoneKb  \bnabla \bu_h )  \PikKb \bw_h , \PikKb \bv_h \big)_{\E}\,,
	\\
\widetilde{c}_{h}^E(\bu_h; \, \varphi_h, \phi_h) &:=  
\bigl( \PikKb \bu_h \cdot \nabla \PikK \varphi_h , \PikK \phi_h \bigr)_{\E} +  \big( (\bu_h  \cdot \bn_{\E}) ({\rm I}-\PikK) \varphi_h , \PikK \phi_h\big)_{\partial\E}\,,
\end{align*}
and their skew-symmetric formulations 
\begin{align*}
	c_{F,h}^{\E}
	(\bw_h; \, \bu_h,  \bv_h) &:= \frac{1}{2} \bigl(\widetilde{c}_{F,h}^E(\bw_h; \, \bu_h, \bv_h) - \widetilde{c}_{F,h}^E(\bw_h; \, \bv_h, \bu_h) \bigr) \,, 
	\\
	c_{h}^{\E}(\bu_h; \, \varphi_h,  \phi_h) &:= \frac{1}{2} \bigl (\widetilde{c}_{h}^E(\bu_h; \, \varphi_h, \phi_h) - \widetilde{c}_{h}^E(\bu_h; \, \phi_h, \varphi_h) \bigr) \,. 
\end{align*}

\paragraph{Discrete stress and semi-linear forms:} 
 \begin{align*}
d_{h}^{\E}(\varphi_h;\phi_h,\bv_h)&:=  \big( \PiMtwoK \Delta \varphi_h  \PikMoneKp \nabla\phi_h ,\PikKb \bv_h\bigr)_{\E}\,, \\ 
r_h^{\E}(\eta_h;\varphi_h,\phi_h)&:=  \big( f'(\PikK \eta_h) \PikMoneKp  \nabla \varphi_h ,  \PikMoneKp  \nabla \phi_h \big)_{\E}\,.  
\end{align*}
 
\paragraph{Discrete load term:}  we consider the following approximation for the load term
\begin{equation*}
(\bg_h, \bv_h)_{\E} := (\boldsymbol{\Pi}_E^{0,k} \bg, \bv_h)_{\E} =  ( \bg, \boldsymbol{\Pi}_E^{0,k} \bv_h)_{\E}.
\end{equation*}

We define the associated global forms in the usual way, by summing the local forms on all mesh elements. Moreover, we recall that all the forms defined above are computable using the degrees of freedom $\boldsymbol{D_{V}}$ and $\boldsymbol{D_{H}}$.
In addition we observe that
the mixed term $b(\bv_h, q_h)$ can be exactly computed by the DoFs for any $\bv_h \in \Vh$ and $q_h \in \Qh$ (cf. properties $\mathbf{(P3)}$).
With a slight abuse of notation, in the following we extend the definition of discrete forms to sufficiently regular functions.

In next results we summarize the main properties of the forms defined above.

\begin{proposition}
[Properties of the discrete forms]
\label{lemma:bound:ch}
Let Assumption~\ref{Assump:mesh} hold. 
Then the following are satisfied:
\begin{enumerate}[label=\roman*)]
\item the forms $m_{F, h}(\cdot, \cdot)$, $a_{\bnabla, h}(\cdot, \cdot)$
 are continuous and coercive, i.e. for all $\bu_h$, $\bv_h \in \Vh$ 
\[
\begin{aligned}
m_{F, h}(\bu_h, \bv_h) &\lesssim \|\bu_h\|_{0, \O} \, \|\bv_h\|_{0, \O} \,,
& \quad
\|\bv_h\|_{0, \O}^2 & \lesssim m_{F, h}(\bv_h, \bv_h)  \,,
\\
a_{\bnabla, h}(\bu_h, \bv_h) &\lesssim \|\bu_h\|_{\bV} \, \|\bv_h\|_{\bV} \,,
& \quad
\|\bv_h\|_{\bV}^2 & \lesssim a_{\bnabla, h}(\bv_h, \bv_h) \,.
\end{aligned}
\] 
\item the forms $m_{h}(\cdot, \cdot)$, $a_{\bD, h}(\cdot, \cdot)$
 are continuous and coercive, i.e. for all $\varphi_h$, $\phi_h \in \Hh$ 
\[
\begin{aligned}
m_{h}(\varphi_h, \phi_h) &\lesssim \|\varphi_h\|_{0, \O} \, \|\phi_h\|_{0, \O} \,,
& \quad
\|\phi_h\|_{0, \O}^2 & \lesssim m_{h}(\phi_h, \phi_h)  \,,
\\
a_{\bD, h}(\varphi_h, \phi_h) &\lesssim \|\varphi_h\|_{H} \, \|\phi_h\|_{H} \,,
& \quad
\|\phi_h\|_{H}^2 & \lesssim a_{\bnabla, h}(\phi_h, \phi_h)\,.
\end{aligned}
\]  
\item the  form $c_{F,h}(\cdot; \cdot, \cdot)$ is skew-symmetric and for any $\bw$, $\bu$, $\bv \in \bV$ the following bounds hold
\begin{align}
c_{F,h}(\bw;\bu,\bv) &\lesssim  \|\bw\|_{\bV}  \|\bu\|_{\bV}  \|\bv\|_{\bV}\,,  \label{boundcF:one} \\ 
c_{F,h}(\bw;\bu,\bv)& \lesssim \, \|\bw\|^{\frac{1}{2}}_{0,\O}\|\nabla \bw\|^{\frac{1}{2}}_{0,\O} \|\bu\|_{\bV} \|\bv\|_{\bV} \,. \label{boundcF:two} 
\end{align}	
\item the  form $c_{h}(\cdot; \cdot, \cdot)$ is skew-symmetric and the following bounds hold
\begin{align}
c_h(\bv; \varphi,\phi) &\lesssim \|\bv\|_{\bV} \|\varphi\|_{H} \|\phi\|_{H}, & \qquad &\text{$\forall \bv \in \bV$, $\forall \varphi,\phi\in H$,} 
\label{c-bound-1}
\\ 
c_h(\bv;\varphi,\phi) &\lesssim \|\varphi\|_{W^{1,\infty}(\O)}\|\bv\|_{0,\O}\|\phi\|_{1,\O}  
&\qquad &\text{$\forall \bv \in \bZ$,  $\forall \varphi \in W^{1,\infty}(\O)$, $\forall \phi\in H$.}	 
\label{c-bound-2}
\end{align}	
\item the form $d_{h}(\cdot; \cdot, \cdot)$
 is continuous i.e. for all $\bv \in \bV$, and for all $\varphi$, $\phi \in H$ 
\[
\begin{aligned}
d_{h}(\varphi; \phi, \bv) & \lesssim \|\varphi\|_{H} \, \|\phi\|_{H} \,
\|\bv\|_{\bV} 
\,.
\end{aligned}
\]  
\end{enumerate}
\end{proposition}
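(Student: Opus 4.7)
The five items can be proved independently using standard VEM machinery. Items (i) and (ii) concern the symmetric bilinear forms $m_{F,h}$, $a_{\bnabla,h}$, $m_h$, $a_{\bD,h}$, each assembled in the canonical projection-plus-stabilization form $b_h^E(u,v)=(\Pi u,\Pi v)_E + s^E((I-\Pi)u,(I-\Pi)v)$. My plan is to establish continuity by Cauchy--Schwarz on each summand together with the upper side of the stability equivalence \eqref{term:stab:SK} and the $L^2$/$H^1$/$H^2$-continuity of the underlying projection, and coercivity via the decomposition $v=\Pi v+(I-\Pi)v$, the triangle inequality, the lower side of \eqref{term:stab:SK} on the orthogonal component, and coercivity of the underlying polynomial inner product on the polynomial component. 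Summation over $E\in\Oh$ then yields the stated global bounds.

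For items (iii)--(v), skew-symmetry of $c_{F,h}$ and $c_h$ is immediate from the explicit antisymmetrization in their definitions, so the task reduces to the continuity estimates. The plan is to bound each form element by element via H\"older's inequality, relying on three recurring tools: (a) the $L^p$-stability of the $L^2$-projections $\PikKb$, $\PikMoneKb$, $\PikK$, $\PikMoneKp$, which under Assumption~\ref{Assump:mesh} follows from Proposition~\ref{approx:prop:L2} combined with polynomial inverse inequalities on star-shaped elements; (b) the two-dimensional Sobolev embeddings $H^1(\O)\hookrightarrow L^q(\O)$ and $H\hookrightarrow L^\infty(\O)\cap W^{1,q}(\O)$ for every $q<\infty$; (c) Ladyzhenskaya's inequality $\|\bw\|_{L^4(\O)}\lesssim \|\bw\|_{0,\O}^{1/2}\|\nabla\bw\|_{0,\O}^{1/2}$ for the sharper bound \eqref{boundcF:two}. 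Concretely, bounding $|\widetilde c_{F,h}^E|$ by $\|\PikMoneKb\bnabla\bu\|_{L^2(E)}\|\PikKb\bw\|_{L^4(E)}\|\PikKb\bv\|_{L^4(E)}$ and applying (a)--(b) gives \eqref{boundcF:one}, while replacing one $L^4$-factor via (c) gives \eqref{boundcF:two}. Item (v) follows similarly by H\"older with exponents $(2,4,4)$ on $\|\PiMtwoK\Delta\varphi\|_{L^2(E)}$, $\|\PikMoneKp\nabla\phi\|_{L^4(E)}$ and $\|\PikKb\bv\|_{L^4(E)}$.

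The real work is in item (iv). For the interior part of $\widetilde c_h^E$, estimate \eqref{c-bound-1} will be obtained by H\"older as $\|\PikKb\bv\|_{L^4(E)}\|\nabla\PikK\varphi\|_{L^4(E)}\|\PikK\phi\|_{L^2(E)}$, together with its twin after swapping $\varphi\leftrightarrow\phi$, which give $\|\bv\|_{\bV}\|\varphi\|_H\|\phi\|_H$ via (a)--(b). For \eqref{c-bound-2} the key idea is to bound the two summands of the skew-symmetrization asymmetrically under the $W^{1,\infty}$-assumption: one contribution uses $\|\nabla\PikK\varphi\|_{L^\infty(E)}\lesssim \|\varphi\|_{W^{1,\infty}(\O)}$ (a stability property of $\PikK$ that follows from its $L^\infty$-stability together with a Bramble--Hilbert bound on $(I-\PikK)$) paired with $\|\bv\|_{0,E}\|\PikK\phi\|_{0,E}$, while the other swaps the roles of $\varphi$ and $\phi$ and uses $\|\PikK\varphi\|_{L^\infty(E)}\lesssim \|\varphi\|_{W^{1,\infty}(\O)}$ paired with $\|\bv\|_{0,E}\|\nabla\PikK\phi\|_{0,E}$.

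The main obstacle, and the step I expect to require the most care, is the extra boundary integral $((\bu_h\cdot\bn_E)(I-\PikK)\varphi_h,\PikK\phi_h)_{\partial E}$ appearing in $\widetilde c_h^E$. Because both factors are element-local, no cancellation occurs on shared interior edges, so the term must be controlled directly via a trace inequality combined with the approximation estimate $\|(I-\PikK)\varphi\|_{L^2(\partial E)}\lesssim h_E^{s-1/2}|\varphi|_{s,E}$ coming from Proposition~\ref{approx:prop:L2}. Under Assumption~\ref{Assump:mesh}, this boundary contribution carries enough powers of $h_E$ to be absorbed into the interior bound after summation over elements and edges, thereby recovering \eqref{c-bound-1} and \eqref{c-bound-2}.
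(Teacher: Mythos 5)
Items (i), (ii), (iii) and (v) of your plan are fine and essentially coincide with what the paper does (the paper simply cites \cite{BLV-SINUM2018,ABSV-SINUM2016} for (i)--(ii) and \cite{BLV-SINUM2018,VK2023} for (iii); your Ladyzhenskaya-based derivation of \eqref{boundcF:two} and your H\"older $(2,4,4)$ argument for (v) are the standard routes). The gap is in item (iv), and specifically in your treatment of the boundary correction term of $\widetilde c_h^{\E}$.

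Your proposal is to control $\bigl((\bv\cdot\bn_{\E})({\rm I}-\PikK)\varphi,\PikK\phi\bigr)_{\partial\E}$ directly by a trace inequality on each factor. This works for \eqref{c-bound-1}, where the right-hand side contains $\|\bv\|_{\bV}$: the scaled trace of $\bv$ costs you $h_E^{-1/2}\|\bv\|_{0,E}+\|\bv\|_{0,E}^{1/2}|\bv|_{1,E}^{1/2}$, and the factor $\|({\rm I}-\PikK)\varphi\|_{0,\partial\E}\lesssim h_E^{3/2}|\varphi|_{2,E}$ supplies enough powers of $h_E$ to close the estimate. It does \emph{not} work for \eqref{c-bound-2}: there the bound must involve $\bv$ only through $\|\bv\|_{0,\O}$, but the trace operator is not bounded from $L^2(\E)$ to $L^2(\partial\E)$, so any standard trace estimate of $\bv\cdot\bn_{\E}$ unavoidably introduces $|\bv|_{1,E}$ (up to a positive power of $h_E$, which cannot be ``absorbed into the interior bound'' because the interior bound contains no $|\bv|_{1}$ term to absorb it into). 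You would obtain a weaker estimate with an extra term of the type $h^{1/2}\|\varphi\|_{W^{1,\infty}(\O)}\|\bv\|_{0,\O}^{1/2}|\bv|_{1,\O}^{1/2}\|\phi\|_{1,\O}$, which is not the stated \eqref{c-bound-2} and would disrupt its later use in the error analysis.

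The paper's proof avoids this by never estimating the boundary integral as such: it applies the divergence theorem to rewrite it as $\bigl({\rm div}(\bv\,({\rm I}-\PikK)\varphi\,\PikK\phi),1\bigr)_{\E}$, expands this into three volume integrals, and recombines one of them with the interior part of $\widetilde c_h^{\E}$ (see \eqref{key-c-bound}--\eqref{key-c-bound2}). After this manipulation every surviving term involves $\bv$ only through $\|\bv\|_{0,E}$ or through ${\rm div}\,\bv$, and the latter vanishes for $\bv\in\bZ$; this is exactly what makes \eqref{c-bound-2} attainable with $\|\bv\|_{0,\O}$ alone (morally, it exploits that for divergence-free $\bv$ the normal trace is controlled in $H^{-1/2}(\partial\E)$ by $\|\bv\|_{0,E}$). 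The same algebraic identity is then reused in the paper for the discrete mass-conservation proof and for Lemma~\ref{lemma:crimen:c}, so this step is not a dispensable detail. To repair your argument, replace the direct trace estimate by this divergence-theorem rewriting (or by an $H({\rm div})$ duality argument for the normal trace); the rest of your asymmetric $W^{1,\infty}$ pairing for the interior terms then goes through.
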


\begin{proof}
The continuity and coercivity properties stated in items $i)$ and $ii)$ can be established by standard arguments; see, e.g., \cite{BLV-SINUM2018,ABSV-SINUM2016}.
Bound \eqref{boundcF:one} in item $iii)$  was proved in \cite[Proposition 3.3]{BLV-SINUM2018}, whereas  bound \eqref{boundcF:two}  can be obtained by extending the arguments presented in \cite[Lemma 4.1]{VK2023} to the high-order case.

Let us now focus on item $iv)$  and prove bound \eqref{c-bound-1}. By definition of the form $c_h(\cdot;\cdot,\cdot)$ we have 
\begin{equation}\label{eq:c-bound}
	\begin{split}
	c_h(\bv;\varphi,\phi) & = \frac{1}{2} \sum_{\E \in \O_h}  \bigl (\widetilde{c}_{h}^E(\bv; \, \varphi, \phi) - \widetilde{c}_{h}^E(\bv; \, \phi, \varphi) \bigr)=:\frac{1}{2} \sum_{\E \in \O_h} (\mu^{\E}_1 - \mu^{\E}_2 ).
	\end{split}
\end{equation}
Applying  the divergence theorem, simple computations yield
\begin{equation}\label{key-c-bound}
	\begin{split}
		\mu^{\E}_1  & =  \bigl(\PikKb \bv \cdot \nabla \PikK \varphi , \PikK \phi\bigr)_{\E} +   \bigl(( \bv  \cdot \bn_{\E}) ({\rm I}-\PikK) \varphi , \PikK \phi\bigr)_{ \dE} 	\\
		& = \bigl( \PikKb \bv \cdot \nabla \PikK \varphi, \PikK \phi   \bigr)_{\E}+ \bigl( {\rm div}(\bv ({\rm I}-\PikK) \varphi  \PikK \phi), 1\bigr)_{E}	\\
		& = \bigl( \PikKb \bv \cdot \nabla \PikK \varphi, \PikK \phi   \bigr)_{\E}+ 
\bigl( ({\rm div}\bv) ({\rm I}-\PikK) \varphi , \PikK \phi \bigr)_{E}+
\\
& \quad + 
\bigl( \bv \cdot \nabla ({\rm I}-\PikK) \varphi , \PikK \phi \bigr)_{E} +
\bigl( \bv ({\rm I}-\PikK) \varphi , \nabla  \PikK \phi \bigr)_{E} 
\\
& = 
\bigl( \bv \cdot \nabla \varphi, \PikK \phi   \bigr)_{\E} + 
\bigl( (\PikKb - {\rm I}) \bv \cdot \nabla \PikK \varphi, \PikK \phi   \bigr)_{\E}+ \\
& \qquad +
\bigl( ({\rm div}\bv) ({\rm I}-\PikK) \varphi , \PikK \phi \bigr)_{E}+
\bigl( \bv ({\rm I}-\PikK) \varphi , \nabla  \PikK \phi \bigr)_{E}.
	\end{split}
\end{equation}
Analogously for the term $\mu^E_2$ we infer
\begin{equation}\label{key-c-bound2}
	\begin{split}
	\mu^{\E}_2  &=  \bigl( \bv \cdot \nabla \phi, \PikK \varphi   \bigr)_{\E} + 
\bigl( (\PikKb - I) \bv \cdot \nabla \PikK \phi, \PikK \varphi   \bigr)_{\E}+ \\
& \qquad +
\bigl( ({\rm div}\bv) ({\rm I}-\PikK) \phi , \PikK \varphi \bigr)_{E}+
\bigl( \bv ({\rm I}-\PikK) \phi , \nabla  \PikK \varphi \bigr)_{E} \,.
\end{split}
\end{equation}

Thus, using the H\"older inequality and the continuity of the $L^2$-projection (cf. Proposition \ref{approx:prop:L2}),  each local term $\mu^{\E}_{1}-\mu^{\E}_{2}$ can be bounded as follows
\begin{equation*}
\begin{aligned}
\mu^{\E}_{1} - \mu^{\E}_{2} &\lesssim 
\|\bv \|_{0,E} 
\|\varphi\|_{W^{1,4}(E)}
\|\phi\|_{L^{4}(E)}
+
\|\bv\|_{0,E} 
\|\varphi\|_{L^{4}(E)}
\|\phi\|_{W^{1,4}(E)}
+
|\bv|_{1,E} 
\|\varphi\|_{L^{4}(E)}
\|\phi\|_{L^{4}(E)}
\\
& \lesssim 
\|\bv\|_{1,E} 
\|\varphi\|_{W^{1,4}(E)}
\|\phi\|_{W^{1,4}(E)}.
\end{aligned} 
\end{equation*}
Thus,
applying the H\"older inequality (for sequences) along with the Sobolev inclusions we obtain the desired result~\eqref{c-bound-1}.

Next, we prove bound \eqref{c-bound-2}. To do that, we apply H\"older inequality in equation \eqref{key-c-bound} and \eqref{key-c-bound2} with $\bv \in \bZ$ and $\varphi \in W^{1,\infty}(\O)$, obtaining  
\begin{equation*}
\begin{aligned}
\mu^{\E}_{1} - \mu^{\E}_{2}  &\lesssim 
\|\bv \|_{0,E} 
\|\varphi\|_{W^{1,\infty}(E)}
\|\phi\|_{0,E}
+
\|\bv\|_{0,E} 
\|\varphi\|_{L^{\infty}(E)}
\|\phi\|_{1,E}
 \lesssim 
\|\bv\|_{0,E} 
\|\varphi\|_{W^{1,\infty}(E)}
\|\phi\|_{1,E}.
\end{aligned} 
\end{equation*}
The desired result \eqref{c-bound-2} follows by inserting the above estimate in \eqref{eq:c-bound} and by applying the H\"older inequality (for sequences). 

We finally prove item $v)$. Employing the H\"older inequality, the continuity of the $L^2$-projection (cf. Proposition \ref{approx:prop:L2}), the  H\"older inequality for sequence and Sobolev inclusions we have
\[
\begin{aligned}
d_h(\varphi; \phi; \bv) & = 
\sum_{E \in \Omega_h} d_h^E(\varphi; \phi; \bv) 
\leq 
\sum_{E \in \Omega_h} \| \PiMtwoK \Delta \varphi \|_{0,E} \, \|\PikMoneKp \nabla\phi \|_{L^4(E)} \, \|\PikKb \bv\|_{L^4(E)}
\\
& \lesssim 
\sum_{E \in \Omega_h} \|  \Delta \varphi \|_{0,E} \, \| \nabla\phi \|_{L^4(E)} \, \|\bv\|_{L^4(E)}
 \lesssim
\|  \varphi \|_{H} \, \| \phi \|_{H} \, \|\bv\|_{\bV} \,.
\end{aligned}
\]
\end{proof}

We finish this subsection with following remark regarding the VEM presented in this work.
\begin{remark}\label{remark:properties}
We emphasize that the construction of the convective trilinear form  $c_{h}^{\E}(\cdot; \, \cdot,  \cdot)$ is non-standard (see \cite{BDLV-M2AN2021}, for a similar construction). This discretization is specifically designed to ensure that the discrete schemes satisfy the mass conservation property (see Theorems~\ref{theo:property:semi} and \ref{theo:property:fully} below), which is typical for CH systems. While in this work, it is possible to design a numerical scheme that meets the mass conservation requirement, achieving energy law within the same framework remains a significant challenge. 
\end{remark}
\setcounter{equation}{0}
\section{Discrete formulations and their properties}
\label{sec:discrete:formulations}
In this section we shall to present  semi- and fully-discrete formulations for the problems~\eqref{form1} and~\eqref{form:kernel}. Additionally, we provide some properties of discrete schemes including mass conservation and energy bounds. 

\subsection{Semi-discrete formulation}
The semi-discrete version of problem \eqref{form1} reads as: 	find 
\[
\bu_h \in    L^{\infty}(0,T;\,  \bL^2(\Omega))  \cap L^{2}(0,T; \Vh) \,, 
\quad
\widehat{p}_h \in  L^{2}(0,T; \Qh)\,,  
\quad
\varphi_h \in    L^{\infty}(0,T;\,  L^2(\Omega))  \cap L^{2}(0,T; \Hh)
\]
such that for a.e. $t \in (0, T]$ and for all $(\bv_h,q_h,\phi_h) \in  \Vh \times \Qh \times \Hh$ the following holds
\begin{equation}\label{disc:form1}
	\left\{
\begin{aligned}
		m_{F,h}(\partial_t\bu_h,\bv_h) + \nu a_{\bnabla,h}(\bu_h, \bv_h) +  c_{F,h}(\bu_h;\bu_h,\bv_h) -b(\bv_h,\widehat{p}_h)  + \lambda  d_h(\varphi_h;\varphi_h, \bv_h) &= (\bg_h,\bv_h)_{\O}\,,\\
			b(\bu_h,q_h)  &= 0\,,\\
		m_{h}(\partial_t\varphi_h, \phi_h) +\gamma a_{\bD,h}(\varphi_h, \phi_h )  +c_{h}(\bu_h;\varphi_h,\phi_h)
		+ \gamma \varepsilon^{-2} r_h(\varphi_h;\varphi_h,\phi_h)  &= 0 \,.
\end{aligned}
\right.
\end{equation}
 The initial conditions are given by $\bu_h(\cdot, 0 ) = \bu_{h,0}$ and $\varphi_h(\cdot, 0) = \varphi_{h,0} $, where $(\bu_{h,0}, \varphi_{h,0})$ can be chosen as the interpolant function of the initial value $(\bu_{0}, \varphi_{0})$ (cf.~\eqref{initial:data})  in the space $\Vh \times \Hh$ (see Proposition~\ref{approx:virtual:velo:phase}).

As in the continuous setting, recalling that $\bZ_h \subset \bZ$,  problem~\eqref{disc:form1} admits the equivalent kernel formulation: find
\[
\bu_h \in    L^{\infty}(0,T;\,  \bL^2(\Omega))  \cap L^{2}(0,T; \Zh) \,, 
\quad
\varphi_h \in    L^{\infty}(0,T;\,  L^2(\Omega))  \cap L^{2}(0,T; \Hh)
\]
such that for a.e. $t \in (0, T]$ and for all $(\bv_h,\phi_h) \in  \Zh \times  \Hh$ the following holds
\begin{equation}\label{disc:form:kernel}
	\left\{
	\begin{aligned}
	m_{F,h}(\partial_t\bu_h,\bv_h) + \nu a_{\bnabla,h}(\bu_h, \bv_h) +  c_{F,h}(\bu_h;\bu_h,\bv_h)  + \lambda  d_h(\varphi_h;\varphi_h, \bv_h) &= (\bg_h,\bv_h)_{\O},\\
m_{h}(\partial_t\varphi_h, \phi_h) +\gamma a_{\bD,h}(\varphi_h, \phi_h )  +c_{h}(\bu_h;\varphi_h,\phi_h)
+ \gamma \varepsilon^{-2} r_h(\varphi_h;\varphi_h,\phi_h) & = 0,		
\end{aligned}
\right.
\end{equation}
with initial conditions given as before, i.e., $(\bu_h(\cdot, 0 ),\varphi_h(\cdot, 0))= (\bu_{h,0}, \varphi_{h,0})$. 

The subsequent analysis will be performed under the following regularity assumption on the semi-discrete phase solution $\varphi_h \in \Hh$. The regularity assumption is standard and well accepted in the analysis of the CH systems, see for instance~\cite{ABSV-SINUM2016,EF89}. 
		\begin{assumption}
		[Regularity assumption on the semi-discrete phase solution]\label{Assump:disc:phase}
			The semi-discrete problem~\eqref{disc:form:kernel} admits a unique solution $\varphi_h$ satisfying, for a.e. $t \in (0,T]$ 
			\[
			\|\varphi_h(\cdot, t)\|_{W^{1,\infty}(\O)} \leq \widetilde{C}_T,
			\]
			with a constant $\widetilde{C}_T$ independent of $h$, but depending on the final time $T$. 
	\end{assumption}
Recalling Proposition \ref{lemma:bound:ch}, from now on, we will adopt the following notation:
\begin{equation}\label{equiv:norms}
\begin{aligned}
|||\bv_h|||_{0,\O}^2 & :=
m_{F,h}(\bv_h,\bv_h) 
\approx \|\bv_h \|_{0,\O}^2
\,, &\quad 
|||\bv_h|||_{1,\O}^2 &:=
a_{\boldsymbol{\nabla},h}(\bv_h,\bv_h) 
\approx \|\bv_h \|_{1,\O}^2
\,,
&\quad &\text{$\forall \bv_h \in \Vh$,}
\\
|||\phi_h|||_{0,\O}^2 &:=
m_h(\phi_h,\phi_h) 
\approx \|\phi_h\|_{0,\O}^2  \,,
 &\quad
  |||\phi_h|||_{2,\O}^2 &:=
a_{\bD,h}(\phi_h,\phi_h) \approx	\|\phi_h \|_{2,\O}^2 \,,
&\quad &\text{$\forall \phi_h \in \Hh$.}
\end{aligned}	
\end{equation}
We now recall the continuous version of Gr\"{o}nwall’s inequality, which plays a key role in the derivation of fundamental properties and error estimates for the semi-discrete virtual scheme \eqref{disc:form:kernel}.

\begin{lemma}[Gr\"{o}nwall’s Lemma]\label{cont:gronwall}
	Let $g$, $q$, and $r$ be nonnegative integrable functions on $[0, T]$ and suppose that $g$ satisfies
	\[
	g(t) \leq q(t) + \int_0^t r(s) g(s) \, ds \quad \text{for all } t \in (0, T).
	\]
	Then,
	\[
	g(t) \leq q(t) + \int_0^t q(s) r(s) \exp\left( \int_s^t r(\sigma) \, d\sigma \right) d s \quad \text{for a.e. $t \in (0, T)$.}
	\]
\end{lemma}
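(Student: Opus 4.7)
The plan is to establish the result by the classical integrating-factor argument, treating the auxiliary quantity $G(t) := \int_0^t r(s)\,g(s)\,ds$ as the main unknown. Since $r$ and $g$ are nonnegative and integrable, $G$ is absolutely continuous on $[0,T]$ with $G(0)=0$ and $G'(t)=r(t)g(t)$ for almost every $t\in(0,T)$. Substituting the hypothesis $g(t)\le q(t)+G(t)$ into this identity gives the differential inequality
\begin{equation*}
G'(t)-r(t)\,G(t)\;\le\;r(t)\,q(t)\qquad\text{for a.e. } t\in(0,T).
\end{equation*}

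Next, I would multiply both sides by the integrating factor $\mu(t):=\exp\bigl(-\int_0^t r(\sigma)\,d\sigma\bigr)$, which is well-defined and positive since $r\in L^1(0,T)$. The key observation is that $\mu$ is absolutely continuous with $\mu'(t)=-r(t)\mu(t)$ a.e., so the left-hand side becomes the derivative of $G(t)\mu(t)$:
\begin{equation*}
\frac{d}{dt}\bigl(G(t)\mu(t)\bigr)\;\le\;r(t)\,q(t)\,\mu(t)\qquad\text{for a.e. } t\in(0,T).
\end{equation*}
Integrating from $0$ to $t$ and using $G(0)=0$ yields
\begin{equation*}
G(t)\,\mu(t)\;\le\;\int_0^t r(s)\,q(s)\,\mu(s)\,ds,
\end{equation*}
and dividing by $\mu(t)>0$ gives, since $\mu(s)/\mu(t)=\exp\bigl(\int_s^t r(\sigma)\,d\sigma\bigr)$,
\begin{equation*}
G(t)\;\le\;\int_0^t r(s)\,q(s)\,\exp\Bigl(\int_s^t r(\sigma)\,d\sigma\Bigr)ds.
\end{equation*}

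Finally, inserting this bound into the original inequality $g(t)\le q(t)+G(t)$ produces the claimed estimate. The only delicate point is justifying the absolute continuity of $G$ and $\mu$ and the a.e.\ product rule under merely $L^1$ regularity, but these are standard facts from real analysis. No serious obstacle is expected; the argument is a textbook manipulation and could alternatively be obtained via Picard-type iteration of the hypothesis, which would bypass the integrating factor altogether at the cost of a slightly longer presentation.
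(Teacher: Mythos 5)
The paper states this lemma without proof, citing it as a classical result, so there is no in-paper argument to compare against. Your integrating-factor proof is the standard one and is correct: defining $G(t)=\int_0^t r(s)g(s)\,ds$, using $r\ge 0$ to pass from the integral inequality to $G'-rG\le rq$ a.e., and integrating $\bigl(G\mu\bigr)'\le rq\mu$ with $\mu(t)=\exp\bigl(-\int_0^t r\bigr)$ yields exactly the stated bound. The technical points you flag are indeed routine: $\mu$ is absolutely continuous because $\exp$ is Lipschitz on the bounded range of $-\int_0^t r$, and the product of two bounded absolutely continuous functions on $[0,T]$ is absolutely continuous, so the a.e.\ product rule applies. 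The only implicit assumption worth naming is that $rg$ (and $rq$) are integrable so that $G$ is finite and absolutely continuous; this is tacit in the statement itself, since otherwise the hypothesis and conclusion are vacuous, and it is satisfied in every application made in the paper.
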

Moreover, the following useful inequality holds for functions in the space $H$. Its proof follows using integration by part, the boundary conditions,  along with the Cauchy-Schwarz and Young inequalities.  For all $\kappa >0$ there exists a constant 
$ C(\kappa) >0 $ 
such that it holds
\begin{equation}\label{ineq:Hi}
	|\phi|_{1,\Omega}^2 \leq \kappa |\phi|_{2,\O}^2 + C(\kappa) \|\phi\|_{0,\Omega}^2 \qquad \forall  \phi \in H.
\end{equation}
In the next result we summarize the properties of our semi-discrete scheme introduced in~\eqref{disc:form:kernel}.

\begin{theorem}[Semi-discrete conservation law and energy stability]
\label{theo:property:semi}
Let Assumptions \ref{Assump:mesh} and \ref{Assump:disc:phase} hold.
Let $(k, \ell)$  in \eqref{eq:localVirtual-Velo} and \eqref{eq:localVirtial-phase}  be such that $k \ge \ell - 1$. 
Suppose that the initial values  $(\bu_{h,0}, \varphi_{h,0})$ of problem \eqref{disc:form:kernel} are uniformly bounded in $L^2$.
Then, the solution $(\bu_h, \varphi_h)$ of problem~\eqref{disc:form:kernel} satisfies the following:
\begin{align}
&(\varphi_h(\bx, t),1)_{\O} = (\varphi_{h,0}(\bx,t), 1)_{\O}\qquad \text{for a.e. $t \in (0,T)$,} \label{mass:disc:conserv}\\ 
&\| \bu_h\|_{L^{\infty}(0,T;\bL^2(\O))}  +  
\| \varphi_h\|_{L^{\infty}(0,T; L^2(\O))} +
\|\bu_h\|_{L^2(0,T;\bV)} +  
\|\varphi_h\|_{L^2(0,T; H)}
\leq \widetilde{C}_{{\rm stab}}, \label{energy:bound}
\end{align}		
where  $\widetilde{C}_{{\rm stab}}$ is a positive constant depending  on the data, on the physical parameters, on the constant $\widetilde{C}_T$ in Assumption \ref{Assump:disc:phase} and is independent of $h$.
\end{theorem}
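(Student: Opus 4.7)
The plan is to test the third equation of~\eqref{disc:form:kernel} with the admissible choice $\phi_h \equiv 1 \in \Hh$ and show that every contribution besides $m_h(\partial_t \varphi_h, 1)$ vanishes. Since $\Pi^{0,\ell}_E 1 = 1$ and $(\mathrm{I}-\Pi^{0,\ell}_E)1 = 0$, the stabilization in $m_h^E$ drops out and $m_h(\partial_t\varphi_h, 1) = \partial_t (\varphi_h, 1)_\Omega$. Likewise, $\bD^2 1 = 0$ together with $\Pi_E^{\bD,\ell} 1 = 1$ gives $a_{\bD,h}(\varphi_h, 1) = 0$, while $\nabla 1 = 0$ annihilates $r_h(\varphi_h;\varphi_h, 1)$. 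The subtle step is $c_h(\bu_h; \varphi_h, 1) = 0$: the relations $\nabla \Pi^{0,\ell}_E 1 = 0$ and $(\mathrm{I} - \Pi^{0,\ell}_E) 1 = 0$ yield $\widetilde{c}_h^E(\bu_h; 1, \varphi_h) = 0$, so by skew-symmetrization it suffices to prove $\sum_E \widetilde{c}_h^E(\bu_h; \varphi_h, 1) = 0$. Applying the identity~\eqref{key-c-bound} with $\phi = 1$: the divergence contribution vanishes because $\div \bu_h = 0$ (recall $\bu_h \in \Zh \subseteq \bZ$); the term containing $\nabla \Pi^{0,\ell}_E 1$ is identically zero; the leading contribution $\sum_E(\bu_h \cdot \nabla \varphi_h, 1)_E = (\bu_h, \nabla \varphi_h)_\Omega$ vanishes by integration by parts using $\div \bu_h = 0$ and $\bu_h|_{\partial \Omega} = \mathbf{0}$; and the remaining correction $((\Pi^{0,k}_E - \mathrm{I}) \bu_h \cdot \nabla \Pi^{0,\ell}_E \varphi_h, 1)_E$ vanishes \emph{elementwise} because $\nabla \Pi^{0,\ell}_E \varphi_h \in \bP_{\ell-1}(E) \subseteq \bP_k(E)$ under the hypothesis $k \ge \ell-1$, and this space is $L^2$-orthogonal to the range of $\mathrm{I}-\Pi^{0,k}_E$. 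Time integration then concludes~\eqref{mass:disc:conserv}.

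\textbf{Energy stability.} The plan is to test the momentum equation with $\bv_h = \bu_h$ and the phase equation with $\phi_h = \varphi_h$. Skew-symmetry of $c_{F,h}$ and $c_h$ eliminates the convective terms, symmetry of $m_{F,h}$ and $m_h$ produces $\tfrac{1}{2}\partial_t |||\bu_h|||_{0,\Omega}^2$ and $\tfrac{1}{2}\partial_t |||\varphi_h|||_{0,\Omega}^2$, and coercivity (cf. Proposition~\ref{lemma:bound:ch}) supplies the dissipative terms $\nu |||\bu_h|||_{1,\Omega}^2$ and $\gamma |||\varphi_h|||_{2,\Omega}^2$. Since $f'(s) = 3s^2 - 1 \ge -1$, the non-sign-definite contribution is controlled by
\[
r_h(\varphi_h;\varphi_h,\varphi_h) \ge -\sum_E \|\Pi^{0,\ell-1}_E \nabla \varphi_h\|_{0,E}^2 \ge - |\varphi_h|_{1,\Omega}^2,
\]
and inequality~\eqref{ineq:Hi} with a small parameter $\kappa$, combined with the norm equivalences~\eqref{equiv:norms}, absorbs a fraction of $|\varphi_h|_{2,\Omega}^2$ into the coercive $H^2$-seminorm term, leaving a harmless $\|\varphi_h\|_{0,\Omega}^2$ remainder.

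The crucial nonlinear coupling enters through the phase-induced stress $\lambda\,d_h(\varphi_h;\varphi_h, \bu_h)$. Exploiting Assumption~\ref{Assump:disc:phase} together with the $L^p$-stability of the involved polynomial projections, one bounds
\[
d_h(\varphi_h;\varphi_h, \bu_h) \lesssim \|\varphi_h\|_{W^{1,\infty}(\Omega)}\,|\varphi_h|_{2,\Omega}\,\|\bu_h\|_{0,\Omega} \lesssim \widetilde{C}_T\,|||\varphi_h|||_{2,\Omega}\,\|\bu_h\|_{0,\Omega},
\]
which a Young inequality splits into $\tfrac{\gamma}{4}|||\varphi_h|||_{2,\Omega}^2 + C\|\bu_h\|_{0,\Omega}^2$; the load term is handled by Cauchy--Schwarz plus Poincar\'e. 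Summing the two tested equations and absorbing the dissipative pieces on the left-hand side yields
\[
\partial_t \bigl( |||\bu_h|||_{0,\Omega}^2 + |||\varphi_h|||_{0,\Omega}^2 \bigr) + \nu |||\bu_h|||_{1,\Omega}^2 + \tfrac{\gamma}{2} |||\varphi_h|||_{2,\Omega}^2 \lesssim |||\bu_h|||_{0,\Omega}^2 + |||\varphi_h|||_{0,\Omega}^2 + \|\bg\|_{0,\Omega}^2.
\]
Applying the continuous Gr\"onwall lemma~\ref{cont:gronwall} to $g(t) := |||\bu_h(\cdot,t)|||_{0,\Omega}^2 + |||\varphi_h(\cdot,t)|||_{0,\Omega}^2$ delivers the two $L^\infty(0,T;L^2)$ bounds; a subsequent time integration of the differential inequality, combined with those uniform-in-time bounds, upgrades them to the $L^2(0,T;\bV)$ and $L^2(0,T;H)$ bounds in~\eqref{energy:bound}.

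\textbf{Main obstacle.} The subtlest point is the elementwise vanishing of $c_h(\bu_h; \varphi_h, 1)$ in the mass-conservation argument: this is exactly where the polynomial-degree compatibility $k \ge \ell - 1$ is essential and where the careful design of the novel skew-symmetric trilinear form pays off, turning mass conservation into a structural feature of the scheme rather than an approximate identity.
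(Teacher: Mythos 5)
Your proof is correct and follows essentially the same route as the paper: mass conservation by testing with $\phi_h\equiv 1$ and exploiting the structure of $c_h$ together with $\div\bu_h=0$ and the orthogonality argument that requires $k\ge\ell-1$, and energy stability by testing with $(\bu_h,\varphi_h)$, using skew-symmetry, the bound $f'\ge -1$, inequality~\eqref{ineq:Hi}, the $W^{1,\infty}$ control of $\varphi_h$ from Assumption~\ref{Assump:disc:phase} to handle $d_h$, and Gr\"onwall. Your explicit identification of where the hypothesis $k\ge\ell-1$ enters (the elementwise vanishing of $((\Pi^{0,k}_E-\mathrm{I})\bu_h\cdot\nabla\Pi^{0,\ell}_E\varphi_h,1)_E$) is a welcome clarification of a step the paper only gestures at.
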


\begin{proof}
In order to prove the discrete mass conservation property~\eqref{mass:disc:conserv},  we choose $\phi_h=1 \in \Hh$  as test function in \eqref{disc:form:kernel}. Exploiting the definition of the discrete forms and the polynomial consistency of $m_h(\cdot,\cdot)$ and $a_{\bD,h}(\cdot, \cdot)$ we obtain 
\begin{equation*}
m_{h}(\partial_t\varphi_h, 1) = m(\partial_t\varphi_h, 1), \quad \text{and} \quad  a_{\bD,h}(\varphi_h, 1 ) =  r_h(\varphi_h;\varphi_h,1)  = 0 \,.
\end{equation*}
The remaining trilinear form $c_h(\cdot;\cdot, \cdot)$ 
needs more analysis. 
Employing the computations in \eqref{eq:c-bound}, \eqref{key-c-bound} and \eqref{key-c-bound2} with $\bv = \bu_h$, $\varphi=\varphi_h$ and $\phi=1$,
and using the definition of $\PikK$ and $\PikKb$ (with $k \geq \ell-1$),
we infer
\begin{equation*}
\begin{split}
c_{h}(\bu_h;\varphi_h,1) &= 
 \frac{1}{2}\sum_{\E \in \O_h} \Big[	\bigl( \bu_h  , \nabla \PikK \varphi_h \bigr)_{\E} +
 \big(\bu_h ,\nabla   ({\rm I}-\PikK) \varphi_h \big)_{ \E} \Big]\\
  &= 	\sum_{\E \in \O_h} (\bu_h,  \nabla \varphi_h)_{\E}  =  -(\div \bu_h, \varphi_h)_{\O} = 0 \,,
\end{split}		
\end{equation*} 
where we in the last step we have integrated by parts and used the fact that $\bu_h \in \Zh$. The desired result follows after integrate over $(0,t)$ the second equation of \eqref{disc:form:kernel}.
	
Now, in order to establish bound \eqref{energy:bound}, we choose  $(\bv_h, \phi_h) =(\bu_h,\varphi_h)$ in  \eqref{disc:form:kernel}, using the skew-symmetric property of $c_{F,h}(\cdot;\cdot,\cdot)$
and $c_{h}(\cdot;\cdot,\cdot)$ (cf. Proposition \ref{lemma:bound:ch}) and recalling definitions \eqref{equiv:norms}, we have that 
\begin{equation}\label{energy:esti:1}
	\begin{split}
\frac{1}{2} \partial_t ||| \bu_h|||^2_{0,\O} + \nu |||\bu_h |||^2_{1,\O}	  &= (\bg_h,\bu_h)_{\O}-\lambda d_h(\varphi_h;\varphi_h,\bu_h) \,,\\		
\frac{1}{2} \partial_t ||| \varphi_h|||^2_{0,\O} + \gamma |||\varphi_h |||^2_{2,\O} &= - \gamma \varepsilon^{-2} r_h(\varphi_h;\varphi_h,\varphi_h)\,. \\		
	\end{split}	
\end{equation}
Moreover, since $f'(\eta)= 3\eta^2-1 \geq -1$, for all $\eta$, from the continuity of the $L^2$-projection (cf. Proposition \ref{approx:prop:L2}), we have that 
\begin{equation*}
	\begin{split}
r_h(\varphi_h;\varphi_h,\varphi_h)  
\geq -  \sum_{\E \in \O_h} \|\PikMoneKp  \nabla \varphi_h \|^2_{0,\E}  \geq -|\varphi_h|^2_{1,\O}.		
	\end{split}
\end{equation*}
From the above inequality and property \eqref{ineq:Hi}, it holds
\begin{equation*}
- \gamma \varepsilon^{-2} r_h(\varphi_h;\varphi_h,\varphi_h) \leq \frac{\gamma}{4} |||\varphi_h |||^2_{2,\O} +  C(\gamma, \varepsilon^{-2}) \|\varphi_h\|^2_{0,\O} \,.	
\end{equation*}
On the other hand, employing Assumption \ref{Assump:disc:phase} and the continuity of the $L^2$-projection we have 
\begin{equation*}
\begin{split}
-\lambda d_h(\varphi_h;\varphi_h,\bu_h)  &\leq \lambda  \sum_{\E \in \O_h} \| \PikMoneKp \varphi_h\|_{W^{1,\infty}(\E)} \|\PiMtwoK \Delta \varphi_h\|_{0,\E} \|\PikKb\bu_h\|_{0,\E}\\
 &\leq 	\frac{\gamma}{4} |||\varphi_h |||^2_{2,\O} +  C(\gamma^{-1}, \lambda, \widetilde{C}_T)  \|\bu_h\|^2_{0,\O} \,.	
	\end{split}	
\end{equation*}
Then, by combining the last two bounds and~\eqref{energy:esti:1}, we arrive at
\begin{equation*}\label{energy:esti:4}
\begin{aligned}
\frac{1}{2} \partial_t ||| \bu_h|||^2_{0,\O} + \frac{1}{2} \partial_t ||| \varphi_h|||^2_{0,\O} &+ \frac{\nu}{2}  |||\bu_h |||^2_{1,\O}+\frac{\gamma}{2} |||\varphi_h |||^2_{2,\O} \leq
\\
& \|\bg\|^2_{0,\O}+ C(\gamma^{-1}, \lambda, \widetilde{C}_T) |||\bu_h|||^2_{0,\O} + 
C(\gamma, \varepsilon^{-2})  |||\varphi_h |||^2_{0,\O} \,.		
\end{aligned}
\end{equation*}
Integrating over $(0,t)$, then applying the Gr\"{o}nwall inequality (cf. Lemma~\ref{cont:gronwall}) and using \eqref{equiv:norms},  we have that 
\begin{equation*}
\| \bu_h\|_{0,\O} +  \| \varphi_h\|_{0,\O} +  \|\bu_h \|_{L^2(0,t,\bV)}+\|\varphi_h \|_{L^2(0,t,H)} \leq
  \widetilde{C}_{{\rm stab}}(t) \,. 			
\end{equation*}
where the involved constant $\widetilde{C}_{{\rm stab}}(t)$ is positive and independent of $h$. The desired result follows by taking essential supremum over $[0,T]$ in the above estimate.

\end{proof}

\subsection{Fully-discrete formulation}\label{subsec:fully}
We now introduce the temporal discretization.
We consider a sequence of time steps  $t_n = n \tau$, with $n=0,1,2,\ldots,N$, with time step size $\tau=T/N$. 
The functions involved in the continuous, semi-discrete and fully-discrete schemes at time $t = t_n$ will be denoted by $w(t_n)$, $w_h(t_n)$ and $w_h^n$, respectively. Furthermore, we define $\delta_t$ as an approximation of time
derivative at time $t_n$. For any discrete function $w^n_h$, it is given by
\[
\delta_t w_h^n := \frac{\: w_h^n-w_h^{n-1} \:}{\tau}.\]

Analogous notation will be used for vectorial functions $\bw^n_h$.
Moreover,  given a Hilbert space $W$ endowed with the norm $\|\cdot\|_{W}$, for any $n=1,\dots, N$ we consider the following discrete-in-time norms
\[
\|w\|_{\ell^{2}(0,t_n;W)}
:=  \Big(\tau \sum_{m=1}^{n} \|w^{m}\|_{W}^{2} \Big)^{1/2}\,,
\qquad
\|w\|_{\ell^{\infty}(0,t_n;W)}
:= \max_{0 \le m \le n} \|w^{m}\|_{W} \,.
\]

Thus, we introduce a fully-discrete problem by coupling  the backward Euler 
method with the semi-discrete VE discretization~\eqref{disc:form1}, which reads as, given $(\bu_{h,0}, \varphi_{h,0})$, seek: 
\[
\{\bu_h^n, \widehat{p}_h^n, \varphi_h^n)\}_{n=1}^{N} \subset \Vh \times \Qh \times \Hh\,, 
\]
such that for all $(\bv_h, q_h, \phi_h) \in \Vh \times \Qh \times \Hh$
the following holds 
\begin{equation}\label{fully:disc:schm1}
	\left\{
	\begin{aligned}
m_{F,h}(\delta_t\bu^n_h,\bv_h) + \nu a_{\bnabla,h}(\bu^n_h, \bv_h) +  c_{F,h}(\bu^n_h;\bu^n_h,\bv_h)  
- b(\bv_h,\widehat{p}_h^n)
+ \lambda  d_h(\varphi^n_h;\varphi^n_h, \bv_h) &= (\bg^n_h,\bv_h)_{\O} \,,   \\ 
b(\bu^n_h,q_h)  &= 0\,,\\
m_{h}(\delta_t\varphi^n_h, \phi_h) +\gamma a_{\bD,h}(\varphi^n_h, \phi_h )  +c_{h}(\bu^n_h;\varphi^n_h,\phi_h)
+ \gamma \varepsilon^{-2} r_h(\varphi^n_h;\varphi^n_h,\phi_h)  &= 0 \,. 
	\end{aligned}
	\right.
\end{equation}
Analogously, we define the fully-discrete counterpart of VE discretization~\eqref{disc:form:kernel}, which reads as, given $(\bu_{h,0}, \varphi_{h,0})$, seek 
\[
\{(\bu_h^n, \varphi_h^n)\}_{n=1}^{N} \subset \Zh \times \Hh \,,
\]
such that for all $(\bv_h, \phi_h) \in \Vh\times \Hh$ the following holds
\begin{equation}\label{fully:disc:schm}
	\left\{
	\begin{aligned}
m_{F,h}(\delta_t\bu^n_h,\bv_h) + \nu a_{\bnabla,h}(\bu^n_h, \bv_h) +  c_{F,h}(\bu^n_h;\bu^n_h,\bv_h)  + \lambda  d_h(\varphi^n_h;\varphi^n_h, \bv_h) &= (\bg^n_h,\bv_h)_{\O}\,,\\ 
m_{h}(\delta_t\varphi^n_h, \phi_h) +\gamma a_{\bD,h}(\varphi^n_h, \phi_h )  +c_{h}(\bu^n_h;\varphi^n_h,\phi_h)
+ \gamma \varepsilon^{-2} r_h(\varphi^n_h;\varphi^n_h,\phi_h)  &= 0 \,.
	\end{aligned}
	\right.
\end{equation}
We now introduce an assumption analogous to Assumption~\ref{Assump:disc:phase}.
\begin{assumption}
[Regularity assumption on the fully-discrete phase solution]
\label{Assump:f-disc:phase}
			The fully-discrete problem~\eqref{fully:disc:schm1} admits a unique solution $\{\varphi_h^n\}_{n=1}^N$ satisfying for all $n=1,\dots, N$
			\[
			\|\varphi_h^n\|_{W^{1,\infty}(\O)} \leq \widehat{C}_T \,,
			\]
			for a constant $\widehat{C}_T$ independent of $h$, but depending on the final time $T$. 
	\end{assumption}

We now state the discrete counterpart of the continuous Gr\"{o}nwall inequality
(cf.~Lemma~\ref{cont:gronwall}), which plays a key role in the analysis of the
fully discrete scheme~\eqref{fully:disc:schm}. This discrete version was
originally proved in~\cite{HR-SINUM90}.

\begin{lemma}[Discrete Gr\"{o}nwall inequality]\label{discrete:gronwall}
	Let $D\geq 0$, $a_j$, $b_j$, $c_j$ and $\kappa_j$ be non negative numbers for any integer $j \geq 0$, such that 
	\begin{equation*}
		a_n+ \tau \sum_{j=0}^n b_j \leq \tau \sum_{j=0}^n \kappa_j a_j+ \tau \sum_{j=0}^n c_j+D,  \qquad \text{for $n \geq 0$.}
	\end{equation*}
	Suppose that $\tau \kappa_j <1$ for all $j$, and set $\sigma_j:=(1-\tau \kappa_j)^{-1}$. 
	Then, the following bound holds
	\begin{equation*}
		a_n+ \tau \sum_{j=0}^n b_j \leq \exp \Big( \tau \sum_{j=0}^n \sigma_j \kappa_j \Big) \Big( \tau \sum_{j=0}^n c_j+D\Big) \qquad \text{for $n \geq 0$.}
	\end{equation*}
\end{lemma}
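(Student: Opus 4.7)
The plan is to reduce the inequality to a one-sided recursion in a single quantity and then establish a multiplicative bound by induction, finally converting the product to an exponential via $1+x \le e^x$. First I would introduce the shorthand $\psi_n := a_n + \tau\sum_{j=0}^n b_j$ and $E_n := \tau \sum_{j=0}^n c_j + D$, noting that $E_n$ is non-decreasing in $n$. Since $b_j \ge 0$, we have $a_j \le \psi_j$, so the hypothesis rewrites as $\psi_n \le \tau \sum_{j=0}^n \kappa_j \psi_j + E_n$. Isolating the $j=n$ term on the left and using $1-\tau\kappa_n > 0$ to divide, I obtain the pointwise recursion
$$
\psi_n \;\le\; \sigma_n \,\tau \sum_{j=0}^{n-1}\kappa_j \psi_j + \sigma_n E_n .
$$

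Next I would prove by induction on $n$ that $\psi_n \le E_n \prod_{j=0}^n \sigma_j$. The $n=0$ case is immediate from the recursion above. For the inductive step, the inductive hypothesis $\psi_j \le E_j \prod_{i=0}^j \sigma_i$ together with monotonicity $E_j \le E_n$ gives
$$
\psi_n \;\le\; \sigma_n E_n \Bigl(\,1 + \tau \sum_{j=0}^{n-1} \kappa_j \prod_{i=0}^j \sigma_i\Bigr),
$$
so it remains to verify the algebraic identity
$$
1 + \tau \sum_{j=0}^{n-1} \kappa_j \prod_{i=0}^j \sigma_i \;=\; \prod_{j=0}^{n-1}\sigma_j .
$$
This follows by telescoping: the defining relation $\sigma_j(1-\tau\kappa_j)=1$ rearranges to $\sigma_j = 1 + \tau \kappa_j \sigma_j$, hence $\prod_{i=0}^{j}\sigma_i - \prod_{i=0}^{j-1}\sigma_i = \tau\kappa_j \prod_{i=0}^{j}\sigma_i$, and summing from $j=0$ to $n-1$ gives the claimed identity.

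Finally, I would apply the elementary bound $\sigma_j = 1 + \tau\kappa_j\sigma_j \le \exp(\tau\kappa_j\sigma_j)$ factor by factor to deduce
$$
\prod_{j=0}^n \sigma_j \;\le\; \exp\!\Bigl(\tau \sum_{j=0}^n \sigma_j \kappa_j\Bigr),
$$
which, combined with the multiplicative bound on $\psi_n$, yields the stated exponential estimate. The only non-routine point is spotting the telescoping identity in the inductive step; once one recognizes that it is nothing more than a rearrangement of the definition of $\sigma_j$, the rest of the argument is mechanical.
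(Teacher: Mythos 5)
Your proof is correct: the reduction to the recursion in $\psi_n$, the isolation of the $j=n$ term using $1-\tau\kappa_n>0$, the telescoping identity $\prod_{i=0}^{j}\sigma_i-\prod_{i=0}^{j-1}\sigma_i=\tau\kappa_j\prod_{i=0}^{j}\sigma_i$, and the final factor-by-factor bound $\sigma_j\le\exp(\tau\kappa_j\sigma_j)$ all check out, and together they yield exactly the stated estimate. The paper itself gives no proof of this lemma---it simply cites Heywood and Rannacher---and your argument is in essence the standard one from that reference (separate the $n$-th term, divide, induct), so there is nothing to flag beyond noting that your write-up is a complete, self-contained version of the classical proof.
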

In the next result we summarize the main properties of our fully-discrete scheme \eqref{fully:disc:schm}.
\begin{theorem}[Fully-discrete conservation law and energy stability]
\label{theo:property:fully}
Let Assumptions \ref{Assump:mesh} and \ref{Assump:f-disc:phase} hold.
Let $(k, \ell)$  in \eqref{eq:localVirtual-Velo} and \eqref{eq:localVirtial-phase}  be such that $k \ge \ell - 1$. 
Suppose that the initial values  $(\bu_{h,0}, \varphi_{h,0})$ of problem \eqref{disc:form:kernel} are uniformly bounded in $L^2$.
Then, the solution $\{(\bu_h^n, \varphi_h)\}_{n=1}^N$ of problem~\eqref{fully:disc:schm} 
 satisfies the discrete mass conservation property
\begin{equation}
\label{mass:f-disc:conserv}
(\varphi^n_h,1)_{\O} = ( \varphi_{h,0},1 )_{\O}
\qquad \text{for $n=1,\dots, N$.} 
 \end{equation}	
Moreover, for a sufficiently small time step $\tau$, the following stability estimate holds:
 \begin{align}
 \label{energy:f-disc}
 \| \bu_h\|_{\ell^{\infty}(0,T;\bL^2(\O))}  +
 \| \varphi_h\|_{\ell^{\infty}(0,T; L^2(\O))} +  
 \|\bu_h \|_{\ell^2(0,T;\bV)} + 
 \|\varphi_h\|_{\ell^2(0,T; H)}  \leq 
 \widehat{C}_{{\rm stab}}\,, 
 \end{align}	
 where  $\widehat{C}_{{\rm stab}}$
is a positive constant depending only on the data, on the physical parameters, on the constant $\widehat{C}_T$ in Assumption \ref{Assump:f-disc:phase}   and is independent of $h$ and $\tau$.
\end{theorem}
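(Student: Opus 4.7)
\medskip
\noindent
\textbf{Proof proposal.}
The plan is to mimic the semi-discrete argument of Theorem~\ref{theo:property:semi}, replacing the time derivative by the backward difference $\delta_t$ and invoking the discrete Gr\"onwall inequality of Lemma~\ref{discrete:gronwall} in place of its continuous counterpart. For the mass conservation property~\eqref{mass:f-disc:conserv}, I would test the third equation of~\eqref{fully:disc:schm} with $\phi_h = 1 \in \Hh$. The polynomial consistency of $m_h$ and $a_{\bD,h}$ gives $m_h(\delta_t\varphi_h^n,1) = m(\delta_t\varphi_h^n,1)$ and $a_{\bD,h}(\varphi_h^n,1) = 0$, while clearly $r_h(\varphi_h^n;\varphi_h^n,1)=0$. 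The key point is that the convective term also vanishes: repeating verbatim the manipulations in~\eqref{eq:c-bound}-\eqref{key-c-bound}-\eqref{key-c-bound2} with $\bv = \bu_h^n$, $\varphi=\varphi_h^n$, $\phi = 1$, integrating by parts and using $\bu_h^n \in \Zh \subset \bZ$, one obtains $c_h(\bu_h^n;\varphi_h^n,1) = -(\div \bu_h^n, \varphi_h^n)_\O = 0$. Hence $(\varphi_h^n - \varphi_h^{n-1},1)_\O = 0$ for every $n$, and iterating in $n$ yields~\eqref{mass:f-disc:conserv}.

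For the energy bound~\eqref{energy:f-disc}, I would test \eqref{fully:disc:schm} with $(\bv_h,\phi_h) = (\bu_h^n, \varphi_h^n)$ and exploit the skew-symmetry of both $c_{F,h}$ and $c_h$ provided by Proposition~\ref{lemma:bound:ch}. The workhorse identity at the discrete level is
\begin{equation*}
m_{F,h}(\delta_t \bw_h^n, \bw_h^n) = \tfrac{1}{2}\delta_t|||\bw_h^n|||_{0,\O}^2 + \tfrac{\tau}{2}|||\delta_t \bw_h^n|||_{0,\O}^2 \ge \tfrac{1}{2}\delta_t|||\bw_h^n|||_{0,\O}^2,
\end{equation*}
and analogously for $m_h(\delta_t\varphi_h^n,\varphi_h^n)$. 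Using $f'(\eta)\ge -1$, the continuity of the $L^2$-projection, and the auxiliary inequality~\eqref{ineq:Hi} exactly as in the semi-discrete proof, I get
$-\gamma\varepsilon^{-2}r_h(\varphi_h^n;\varphi_h^n,\varphi_h^n) \le \tfrac{\gamma}{4}|||\varphi_h^n|||_{2,\O}^2 + C(\gamma,\varepsilon^{-2})|||\varphi_h^n|||_{0,\O}^2$, while Assumption~\ref{Assump:f-disc:phase} (in place of Assumption~\ref{Assump:disc:phase}) gives
$-\lambda d_h(\varphi_h^n;\varphi_h^n,\bu_h^n) \le \tfrac{\gamma}{4}|||\varphi_h^n|||_{2,\O}^2 + C(\gamma^{-1},\lambda,\widehat{C}_T)|||\bu_h^n|||_{0,\O}^2$.

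Summing the two resulting inequalities, absorbing the positive $|||\varphi_h^n|||_{2,\O}^2$ contributions into the left-hand side, and bounding the load by Cauchy-Schwarz and Young, I obtain a relation of the form
\begin{equation*}
\tfrac{1}{2}\delta_t\bigl(|||\bu_h^n|||_{0,\O}^2 + |||\varphi_h^n|||_{0,\O}^2\bigr) + \tfrac{\nu}{2}|||\bu_h^n|||_{1,\O}^2 + \tfrac{\gamma}{2}|||\varphi_h^n|||_{2,\O}^2 \le C\|\bg(t_n)\|_{0,\O}^2 + C_\star\bigl(|||\bu_h^n|||_{0,\O}^2 + |||\varphi_h^n|||_{0,\O}^2\bigr),
\end{equation*}
with $C_\star$ depending on $\gamma, \gamma^{-1}, \varepsilon^{-2}, \lambda, \widehat{C}_T$ but independent of $h$ and $\tau$. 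Multiplying by $\tau$, summing over $n = 1,\dots, N$ and setting $a_n := |||\bu_h^n|||_{0,\O}^2 + |||\varphi_h^n|||_{0,\O}^2$, $b_n := \nu|||\bu_h^n|||_{1,\O}^2 + \gamma|||\varphi_h^n|||_{2,\O}^2$, the assumption that $\bu_{h,0},\varphi_{h,0}$ are uniformly bounded in $L^2$ controls the starting datum; applying Lemma~\ref{discrete:gronwall} with the smallness condition $\tau C_\star < 1$ (which fixes the time step restriction mentioned in the statement) and taking the maximum over $n$ yields~\eqref{energy:f-disc} with the norm equivalences~\eqref{equiv:norms}.

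The main obstacle, as in the semi-discrete case, is the treatment of the coupling terms $d_h$ and $c_h$: the former requires the $W^{1,\infty}$-regularity provided by Assumption~\ref{Assump:f-disc:phase} to obtain a bound involving only $\|\bu_h^n\|_{0,\O}$ (so the resulting constant can be absorbed via Gr\"onwall), and the latter requires the skew-symmetric reformulation together with $\bu_h^n \in \Zh$ to guarantee that no uncontrolled $\|\varphi_h^n\|_{H}$ contributions appear. Once these are handled, the rest is a routine application of discrete Gr\"onwall.
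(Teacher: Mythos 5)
Your proposal is correct and follows essentially the same route as the paper: mass conservation via testing with $\phi_h=1$ and the divergence-free property of $\bu_h^n\in\Zh$, and the energy bound via testing with $(\bu_h^n,\varphi_h^n)$, the telescoping identity for $m_{F,h}(\delta_t\bw_h^n,\bw_h^n)$ (which is exactly the paper's $\frac{1}{2\tau}(|||\bw_h^n|||^2-|||\bw_h^{n-1}|||^2+|||\bw_h^n-\bw_h^{n-1}|||^2)$ rewritten), the same bounds on $r_h$ and $d_h$ under Assumption~\ref{Assump:f-disc:phase}, and the discrete Gr\"onwall inequality under the smallness condition on $\tau$. No gaps.
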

\begin{proof}
Discrete mass conservation follows from the same arguments as in Theorem~\ref{theo:property:semi}.
Moreover, by testing the first and second equations in~\eqref{fully:disc:schm} with $\bv_h = \bu^n_h$ and  $\phi_h = \varphi^n_h$, respectively, by exploiting the properties of the discrete forms (see Lemma~\ref{lemma:bound:ch}), 
and from the identities
\[
\begin{aligned}
m_{F,h}(\delta_t\bu^n_h,\bu^n_h)& = 
\frac{1}{2 \tau}
\big(|||\bu^n_h|||^2_{0,\O}-|||\bu^{n-1}_h|||^2_{0,\O} + |||\bu^n_h - \bu_h^{n-1}|||^2_{0,\O} \big)
\\
m_{h}(\delta_t\varphi^n_h,\varphi^n_h) &= 
\frac{1}{2 \tau}
\big(|||\varphi^n_h|||^2_{0,\O}-|||\varphi^{n-1}_h|||^2_{0,\O} + |||\varphi^n_h - \varphi^{n-1}_h|||^2_{0,\O} \big)
\end{aligned}
\]
we obtain 
\begin{equation*}
\begin{split}
&\frac{1}{2\tau}	\bigl(|||\bu^n_h|||^2_{0,\O} - |||\bu^{n-1}_h|||^2_{0,\O} \bigr)+ \nu|||\bu^n_h|||^2_{1,\O}
	\leq -\lambda d_h(\varphi^n_h;\varphi^n_h,\bu_h^n)_{0,\O}  + (\bg_h^n,\bu_h^n)_{\O} \,,
	\\
&\frac{1}{2\tau}	\bigl(|||\varphi^n_h|||^2_{0,\O} -|||\varphi^{n-1}_h|||^2_{0,\O}\bigr)+ \gamma |||\varphi^n_h|||^2_{2,\O}  
\leq -\gamma \varepsilon^{-2}r_h(\varphi^n_h;\varphi^n_h,\varphi_h^n)_{\O} \,.
\end{split}
\end{equation*}
Then, by combining the above bounds with arguments analogous to those used in Theorem~\ref{theo:property:semi}, we infer
\[
\begin{aligned}
\frac{1}{2\tau}	\bigl(|||\bu^n_h|||^2_{0,\O} - |||\bu^{n-1}_h|||^2_{0,\O} \bigr) &+ 
\frac{1}{2\tau}	\bigl(|||\varphi^n_h|||^2_{0,\O} -|||\varphi^{n-1}_h|||^2_{0,\O}\bigr)
+ \frac{\nu}{2}  |||\bu_h^n |||^2_{1,\O}+\frac{\gamma}{2} |||\varphi_h^n |||^2_{2,\O} 
\\
& \leq \|\bg^n\|^2_{0,\O}+ C(\gamma^{-1}, \lambda, \widehat{C}_T) |||\bu_h^n|||^2_{0,\O} + 
C(\gamma, \varepsilon^{-2})  |||\varphi_h^n |||^2_{0,\O} \,.		
\end{aligned}
\]
Bound \eqref{energy:f-disc} follows by summing over $n = 1,\ldots, N$,
then applying the discrete Gr\"{o}nwall inequality, provided that the time step $\tau$ is sufficiently small (cf. Lemma~\ref{discrete:gronwall}).

\end{proof}	
\setcounter{equation}{0}
\section{Error analysis}\label{sec:error:analysis}
This section presents error estimates for the semi-discrete and fully-discrete virtual element schemes (cf. \eqref{disc:form:kernel} and \eqref{fully:disc:schm}). The analysis mainly addresses the spatial discretization, which constitutes the core novelty of the method, with particular emphasis on the coupling of divergence-free and $C^1$-conforming virtual element approaches.
For completeness, some error bounds for the fully-discrete scheme are also provided, while the temporal discretization is treated by standard techniques.

The convergence analysis will be performed under the following regularity assumption on the external force $\bg$, initial data and the  solutions $(\bu, \widehat{p}, \varphi)$ of problem \eqref{form1}.
\begin{assumption}[Regularity assumptions on the data and solution -- error analysis]
\label{assump:reg:add}
	The external force, the initial data,  and the solution
	of problem~\eqref{form1} satisfy
\begin{itemize}
\item $\bg \in L^2(0, T; \bH^{k}(\O))$,  	
      $\bu_0 \in \bH^{k+1}(\O)$,
	  $\varphi_0 \in H^{\ell+1}(\O)$,
\item $\bu \in L^2(0, T; \bH^{k+1}(\O)) \cap  H^1(0, T; \bH^{k}(\O))$,
\item $\varphi \in H^1(0,T; H^{\ell+1}(\O))$,  $f(\varphi) \in 	 L^2(0,T; H^{\ell-2}(\O))$.  
\end{itemize}	
\end{assumption}
In the forthcoming analysis we set $m := \min\{k,\ell-1\}$.

\subsection{Preliminary results}\label{subsec:prelimi} 
First, we recall the   following approximation for the velocity and phase 
virtual element spaces, which can be found, for instance in~\cite{BLV-SINUM2018} and \cite{BM-CMAME2013,CM-CAMWA2016}, respectively. 
\begin{proposition}
[Interpolation errors]
\label{approx:virtual:velo:phase}
Let Assumption  \ref{Assump:mesh} hold.
Then for any  $(\bv, \phi )\in (\bV \cap  \bH^{k+1}(\Omega)) \times   (H \cap  H^{\ell+1}(\Omega))$, 
there exists $(\bv_{I},  \phi_{I})\in\Vh \times \Hh$, such that
	\begin{equation*}
		\begin{aligned}
			\|\bv-\bv_{I}\|_{0,\Omega} + h \|\bv-\bv_{I}\|_{\bV} &\lesssim h^{k+1}|\bv|_{k+1,\Omega} \,,
			\\
			\|\phi-\phi_{I}\|_{0,\Omega} + h |\phi-\phi_{I}|_{1,\Omega} + h^2 \|\phi-\phi_{I}\|_{H}   &\lesssim  h^{\ell+1}|\phi|_{\ell+1,\Omega}  \,.	
		\end{aligned}
	\end{equation*}
	Furthermore, if $\bv \in \bZ$ it holds $\bv_I \in \Zh$. 
\end{proposition}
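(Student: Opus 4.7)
The plan is to define the two interpolants locally, via the DoFs, and then reduce each estimate to a combination of a polynomial approximation bound (Proposition \ref{approx:prop:L2}) and a stability estimate for the interpolation operator in terms of its DoFs. More precisely, for every $E \in \Omega_h$ I would define $\bv_I|_E \in \VK$ and $\phi_I|_E \in \HK$ as the unique elements (by unisolvency, properties \textbf{(P2)} and \textbf{(P5)}) whose DoFs $\boldsymbol{D_V}$ and $\boldsymbol{D_H}$ coincide with those of $\bv$ and $\phi$. The conformity of the interpolants ($\bv_I \in \bV$ and $\phi_I \in H$, with the correct boundary data) follows because the DoFs are shared across edges/vertices, and zero boundary data is preserved by the vertex/edge DoFs.

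To prove the velocity estimate I would use the standard VEM trick: choose a local polynomial $\bv_\pi \in \bP_k(E)$ given by Proposition \ref{approx:prop:L2} and split
\[
\bv - \bv_I = (\bv - \bv_\pi) + (\bv_\pi - \bv_I).
\]
The first summand is controlled by Bramble--Hilbert in $L^2$ and $H^1$. For the second one, note that $(\bv_\pi - \bv_I) \in \VK$ and, using the polynomial inclusion $\bP_k(E) \subseteq \VK$ (property \textbf{(P1)}), its DoFs coincide with those of $\bv_\pi - \bv$. Under Assumption \ref{Assump:mesh}, a standard DoF-stability estimate for the enhanced divergence-free space (see \cite{BLV-SINUM2018,AVV-IMAJNA2023}) gives
\[
\|\bv_\pi - \bv_I\|_{0,E} + h_E \|\bv_\pi - \bv_I\|_{1,E} \lesssim h_E \sum_{j} h_E^{s_j} \|\boldsymbol{D_V^{(j)}}(\bv_\pi - \bv)\|_{\ell^2},
\]
and each group of DoFs is bounded, through trace/inverse inequalities, by $L^2$ and $H^1$ norms of $\bv - \bv_\pi$ on $E$. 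Summing over the mesh yields the velocity estimate. The same strategy, with $\phi_\pi \in \P_\ell(E)$ and the $C^1$-conforming DoFs (now scaling vertex gradient DoFs by $h_{\ba_i}$, edge-normal DoFs by $h_e$, etc., cf. $\DXu$--$\DXc$), delivers the estimate for $\phi - \phi_I$ in $L^2$, $H^1$, and $H^2$ by Bramble--Hilbert applied to $\phi \in H^{\ell+1}(E)$.

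For the divergence-free preservation, assume $\bv \in \bZ$, i.e.\ $\div \bv = 0$ in $\Omega$. By property $(ii)$ of $\VK$ in~\eqref{eq:localVirtual-Velo}, $\div \bv_I \in \P_{k-1}(E)$ on every element, so it suffices to show that all its moments against $\P_{k-1}(E)$ vanish. The moments against $\P_{k-1 \setminus 0}(E)$ are directly encoded by $\boldsymbol{D_V4}$ and coincide, by construction, with those of $\div \bv = 0$. For the constant mode, integrating by parts gives
\[
\int_E \div \bv_I = \int_{\partial E} \bv_I \cdot \bn_E = \int_{\partial E} \bv \cdot \bn_E = \int_E \div \bv = 0,
\]
where the second equality uses that $\bv_I \cdot \bn_e \in \P_{\widehat k}(e)$ is exactly determined by the vertex and edge DoFs $\boldsymbol{D_V1}$, $\boldsymbol{D_V2}$, which are shared with $\bv$. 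Hence $\div \bv_I = 0$ on each $E$, so $\bv_I \in \Zh$.

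The main technical obstacle is the DoF-stability estimate for the enhanced virtual spaces: virtual functions are defined only implicitly through their $H^1$ (resp.\ biharmonic) problem, so bounding $\|\bv_\pi - \bv_I\|_{1,E}$ by the DoFs requires carefully scaled trace and inverse inequalities on star-shaped polygons, together with the enhancement condition (item $(iv)$ in \eqref{eq:localVirtual-Velo} and item $(iii)$ in \eqref{eq:localVirtial-phase}) to control the internal $L^2$-moments. These ingredients are, however, already available in the VEM literature cited in the statement, so the proof reduces to assembling them; for this reason I would omit the routine verifications and refer to \cite{BLV-SINUM2018,BM-CMAME2013,CM-CAMWA2016}.
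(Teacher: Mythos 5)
Your overall strategy for the two interpolation estimates (DoF interpolant, splitting through a Bramble--Hilbert polynomial $\bv_\pi$, DoF-stability via scaled trace/inverse inequalities) is exactly the standard route; the paper itself proves nothing here and simply cites \cite{BLV-SINUM2018} and \cite{BM-CMAME2013,CM-CAMWA2016}, so that part of your sketch is an acceptable reconstruction of what those references do.

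There is, however, a genuine gap in your argument for $\bv \in \bZ \Rightarrow \bv_I \in \Zh$, at the constant mode. You correctly reduce the claim to $\int_{\partial E} \bv_I \cdot \bn_E = \int_{\partial E} \bv \cdot \bn_E$ on every element, but you justify this equality by saying that $\bv_I\cdot\bn_e \in \P_{\widehat{k}}(e)$ is ``exactly determined by the vertex and edge DoFs, which are shared with $\bv$.'' Sharing point values does not give equality of edge integrals: if $\bv_I\cdot\bn_e$ is defined as the Lagrange interpolant of $\bv\cdot\bn_e$ at $\widehat{k}+1$ arbitrary distinct points of $e$, then $\int_e \bv_I\cdot\bn_e \neq \int_e \bv\cdot\bn_e$ for a general (non-polynomial) $\bv$. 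Since $\boldsymbol{D_{V}4}$ only fixes the moments of $\div\bv_I$ against non-constant monomials, the conclusion would then be that $\div\bv_I$ equals some nonzero multiple of the $L^2(E)$-orthogonal complement of $\mathrm{span}\{m_{\boldsymbol{\alpha}}: |\boldsymbol{\alpha}|>0\}$ in $\P_{k-1}(E)$, not zero, and $\bv_I\notin\Zh$.

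The missing idea is precisely the reason for the enhancement $\widehat{k}=\max\{k,2\}\geq 2$: on each edge one must \emph{define} the normal-component interpolation so that, besides matching the two vertex values, it reproduces the edge moments of $\bv\cdot\bn_e$ up to order $\widehat{k}-2\geq 0$ (equivalently, one reads the internal edge DoFs as moments rather than point values, or chooses the interpolant accordingly as in \cite{BLV-SINUM2018}). With the zeroth-order edge moment preserved one gets $\int_e \bv_I\cdot\bn_e=\int_e\bv\cdot\bn_e$, hence $\int_E\div\bv_I=\int_E\div\bv=0$, and together with $\boldsymbol{D_{V}4}$ this yields $\div\bv_I=0$ elementwise and $\bv_I\in\Zh$. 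Your argument as written skips exactly the step that the space was designed to make possible.
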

%
Next lemmas give us the measure of the variational crime in the discretization of  the trilinear forms $c_{F}(\cdot;\cdot,\cdot)$, $c(\cdot;\cdot,\cdot)$ and  $d(\cdot;\cdot,\cdot)$.
\begin{lemma}
\label{lemma:crimen:cF}
Let  Assumption  \ref{Assump:mesh} hold.
Let $\bw\in \bV \cap H^{k+1}(\O)$, then  for all $\bv\in \bV$ it holds that
\begin{equation*}
	\begin{split}
c_F(\bw;\bw,\bv)-c_{F,h}(\bw;\bw,\bv)
&\lesssim h^{k}\big(\|\bw\|_{k,\O}+\|\bw\|_{\bV} +\|\bw\|_{k+1} \big) \|\bw\|_{k+1}\|\bv\|_{\bV}.
	\end{split}
\end{equation*}
\end{lemma}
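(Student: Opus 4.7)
The plan is to proceed element-by-element, insert polynomial projections as a telescope in each factor of the trilinear form, and estimate each resulting piece with H\"older's inequality, the Bramble--Hilbert-type bounds of Proposition~\ref{approx:prop:L2}, $L^2$-stability of the polynomial projections, the inverse estimate $\|p\|_{L^\infty(E)}\lesssim h^{-1}\|p\|_{0,E}$ on polynomials, and the Sobolev embedding $\bH^1(\O)\hookrightarrow \bL^4(\O)$ valid in 2D. Concretely, using the definitions of $c_F$ and $c_{F,h}$, I would first split
\[
c_F(\bw;\bw,\bv)-c_{F,h}(\bw;\bw,\bv) = \tfrac{1}{2}\sum_{E\in\O_h}\bigl(G_1^E - G_2^E\bigr),
\]
with
\[
\begin{aligned}
G_1^E &:= \bigl((\bnabla\bw)\bw,\bv\bigr)_E - \bigl((\PikMoneKb\bnabla\bw)\PikKb\bw,\PikKb\bv\bigr)_E,\\
G_2^E &:= \bigl((\bnabla\bv)\bw,\bw\bigr)_E - \bigl((\PikMoneKb\bnabla\bv)\PikKb\bw,\PikKb\bw\bigr)_E,
\end{aligned}
\]
and then telescope each $G_i^E$ through the three slots; for $G_1^E$,
\[
G_1^E = \bigl((\bnabla\bw-\PikMoneKb\bnabla\bw)\bw,\bv\bigr)_E + \bigl((\PikMoneKb\bnabla\bw)(\bw-\PikKb\bw),\bv\bigr)_E + \bigl((\PikMoneKb\bnabla\bw)\PikKb\bw,\bv-\PikKb\bv\bigr)_E,
\]
and an analogous identity for $G_2^E$.

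Each local piece I would then bound by H\"older's inequality with exponents $(2,4,4)$ or $(2,\infty,2)$ combined with the approximation estimates $\|\bnabla\bw-\PikMoneKb\bnabla\bw\|_{0,E}\lesssim h^k\|\bw\|_{k+1,E}$ and $\|\bw-\PikKb\bw\|_{0,E}\lesssim h^{k+1}\|\bw\|_{k+1,E}$ from Proposition~\ref{approx:prop:L2}, the stability of the $L^2$-projection, and, when a polynomial factor needs to be passed from $L^2$ to $L^\infty$, the polynomial inverse inequality. Summation across $E$ is done through discrete H\"older, and the remaining $L^4$ norms are absorbed into $\|\cdot\|_{\bV}$ via $\bH^1(\O)\hookrightarrow\bL^4(\O)$. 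Each of the three telescoping pieces then contributes a bound of the form $h^k\,\|\bw\|_{*}\|\bw\|_{k+1,\O}\|\bv\|_{\bV}$, where $\|\bw\|_{*}$ is one of $\|\bw\|_{k,\O}$, $\|\bw\|_{\bV}$, $\|\bw\|_{k+1,\O}$; summing them yields exactly the stated estimate.

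The main technical obstacle is the contribution of $G_2^E$ in which the ``rough'' factor $\bv\in\bV$ appears under $\bnabla$, since a direct H\"older bound would fall short by one power of $h$. To recover the full order I would exploit the $L^2$-orthogonality of $\bnabla\bv-\PikMoneKb\bnabla\bv$ against $[\P_{k-1}(E)]^{2\times 2}$ and transfer the approximation error to the \emph{smooth} product $\bw\otimes\bw$, namely
\[
\bigl((\bnabla\bv-\PikMoneKb\bnabla\bv)\bw,\bw\bigr)_E = \bigl(\bnabla\bv-\PikMoneKb\bnabla\bv,\,\bw\otimes\bw-\boldsymbol{\Pi}_E^{0,k-1}(\bw\otimes\bw)\bigr)_E.
\]
Applying Proposition~\ref{approx:prop:L2} to $\bw\otimes\bw$ and controlling $\|\bw\otimes\bw\|_{k,E}$ via the Leibniz rule together with the embedding $H^{k+1}(\O)\hookrightarrow L^\infty(\O)$ (valid in 2D for $k\ge 1$) produces precisely the $\|\bw\|_{k,\O}\|\bw\|_{k+1,\O}$ factor in the final bound; this orthogonality-based step is where the regularity $\bw\in \bH^{k+1}(\O)$ is used in an essential way.
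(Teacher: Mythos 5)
The paper's own ``proof'' is a one-line citation to \cite[Lemma 4.3]{BLV-SINUM2018}, so your self-contained argument is welcome and follows the standard route for this kind of variational-crime estimate: element-wise telescoping through the three slots, H\"older plus Bramble--Hilbert, and an orthogonality/duality step to rescue the slots occupied by the merely-$\bH^1$ test function $\bv$. The decomposition into $G_1^E$, $G_2^E$, the treatment of $G_2^E$ via
$\bigl((\bnabla\bv-\PikMoneKb\bnabla\bv)\bw,\bw\bigr)_E=\bigl(\bnabla\bv-\PikMoneKb\bnabla\bv,\,\bw\otimes\bw-\boldsymbol{\Pi}^{0,k-1}_E(\bw\otimes\bw)\bigr)_E$, and the Leibniz/Sobolev control of $|\bw\otimes\bw|_{k,E}$ are all correct and do produce the $\|\bw\|_{k,\O}\|\bw\|_{k+1,\O}$ contribution.

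There is, however, a gap in the treatment of $G_1^E$ for $k\ge 2$. Your third telescoping piece there is $\bigl((\PikMoneKb\bnabla\bw)\PikKb\bw,\,\bv-\PikKb\bv\bigr)_E$, and you claim each piece is bounded by ``H\"older combined with the approximation estimates''. But the only factor in this piece that can supply powers of $h$ is $\bv-\PikKb\bv$, and since $\bv\in\bV$ is only $\bH^1$, Proposition~\ref{approx:prop:L2} gives $\|\bv-\PikKb\bv\|_{0,E}\lesssim h_E|\bv|_{1,E}$ --- a single power of $h$. A direct H\"older bound of this piece is therefore $O(h)$, not $O(h^k)$, so the asserted conclusion ``each of the three telescoping pieces contributes $h^k\,\|\bw\|_{*}\|\bw\|_{k+1,\O}\|\bv\|_{\bV}$'' fails for this piece whenever $k\ge 2$. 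The repair is exactly the device you already use for $G_2^E$: exploit that $\bv-\PikKb\bv$ is $L^2$-orthogonal to $\bP_k(E)$, write
\begin{equation*}
\bigl((\PikMoneKb\bnabla\bw)\PikKb\bw,\,\bv-\PikKb\bv\bigr)_E
=\bigl(({\rm I}-\PikKb)\bigl[(\PikMoneKb\bnabla\bw)\PikKb\bw\bigr],\,\bv-\PikKb\bv\bigr)_E,
\end{equation*}
then compare $(\PikMoneKb\bnabla\bw)\PikKb\bw$ with the smooth product $(\bnabla\bw)\bw$ (whose $\bP_k$-projection error is $O(h^{k})$ via $|(\bnabla\bw)\bw|_{k,E}$ and Leibniz, using $\bw\in\bH^{k+1}$) and telescope the difference, which is itself $O(h^k)$. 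Equivalently, one can reorder the telescoping so that the $\bv$-slot is peeled off first against the exact product $(\bnabla\bw)\bw$. With this amendment (and the symmetric observation for the last slot of $G_2^E$, which is harmless since there $\bw-\PikKb\bw$ already carries $h^{k+1}$), your argument closes and yields the stated bound; as written, it only proves the lemma for $k=1$.
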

\begin{proof}
The proof has been established in \cite[Lemma 4.3]{BLV-SINUM2018}.
\end{proof}

\begin{lemma}
\label{lemma:crimen:c}
Let  Assumption  \ref{Assump:mesh} hold.
Let $\bv \in  \bZ \cap \bH^{k+1}(\O)$ and $\varphi\in  H\cap H^{\ell+1}(\O)$, then for all $\phi\in H$ it holds that
\begin{equation*}
c(\bv;\varphi,\phi)-c_h(\bv;\varphi,\phi)
\lesssim h^{m+1}
\bigl(
\|\bv\|_{k+1,\O} \, \|\varphi\|_{H} +  
h \|\bv\|_{k+1, \O} \|\varphi\|_{\ell+1, \O} + 
h \|\bv\|_{\bV} |\varphi|_{\ell+1, \O} 
\bigr)
\|\phi\|_{H}.
\end{equation*}
\end{lemma}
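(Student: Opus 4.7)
The plan is to reuse the element-wise identities \eqref{key-c-bound} and \eqref{key-c-bound2} already derived in the proof of Proposition~\ref{lemma:bound:ch}. Since $\bv \in \bZ$ we have $\div\bv = 0$, so the $\bigl(({\rm div}\,\bv)({\rm I} - \PikK)\varphi, \PikK\phi\bigr)_E$ terms (and their symmetric counterparts) vanish. Combining the simplified expansions of $\mu_1^E$ and $\mu_2^E$ with the continuous definition $c(\bv;\varphi,\phi) = \frac{1}{2}\sum_E [(\bv\cdot\nabla\varphi,\phi)_E - (\bv\cdot\nabla\phi,\varphi)_E]$ yields
\[
2[c-c_h](\bv;\varphi,\phi) = \sum_{E \in \Oh}\bigl(I_1^E + I_2^E + I_3^E - J_1^E - J_2^E - J_3^E\bigr),
\]
with
\[
\begin{aligned}
I_1^E &:= (\bv \cdot \nabla \varphi,\phi - \PikK \phi)_E,
&\qquad
J_1^E &:= (\bv \cdot \nabla \phi,\varphi - \PikK \varphi)_E,
\\
I_2^E &:= -\bigl(({\rm I} - \PikKb)\bv \cdot \nabla \PikK \varphi, \PikK \phi\bigr)_E,
&\qquad
J_2^E &:= -\bigl(({\rm I} - \PikKb)\bv \cdot \nabla \PikK \phi, \PikK \varphi\bigr)_E,
\\
I_3^E &:= -\bigl(\bv ({\rm I} - \PikK)\varphi, \nabla \PikK \phi\bigr)_E,
&\qquad
J_3^E &:= -\bigl(\bv ({\rm I} - \PikK)\phi, \nabla \PikK \varphi\bigr)_E.
\end{aligned}
\]

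The six terms are then estimated by combining H\"older's inequality, the Bramble--Hilbert polynomial approximation estimates of Proposition~\ref{approx:prop:L2} in both $L^2$ and $L^4$, and the 2D Sobolev embedding $H^1(E) \hookrightarrow L^4(E)$. For $I_1^E$ and $J_1^E$ the essential ingredient is the $L^2$-orthogonality of $\phi - \PikK\phi$ (resp. $\varphi - \PikK\varphi$) against $\P_k(E)$: for any $q \in \P_m(E) \subseteq \P_k(E)$, with $m = \min\{k, \ell-1\}$, one has $I_1^E = (\bv\cdot\nabla\varphi - q, \phi - \PikK\phi)_E$. Choosing $q$ as a suitable product of polynomial projections of $\bv$ and $\nabla\varphi$, so that the residual is controlled with approximation order $h_E^{m+1}$ (exploiting $\bv \in \bH^{k+1}(E)$ with $k+1 \ge m+1$ and $\nabla\varphi \in \bH^{\ell}(E)$ with $\ell \ge m+1$), and combining with the bound $\|\phi - \PikK\phi\|_{0,E} \lesssim h_E^2 \|\phi\|_{2,E}$ (since $\phi$ only carries $H^2$ regularity), produces the leading contribution of order $h^{m+1}\|\bv\|_{k+1,\O}\|\varphi\|_{H}\|\phi\|_{H}$. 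The terms $I_2^E, J_2^E$ are bounded via $\|({\rm I} - \PikKb)\bv\|_{0,E} \lesssim h_E^{k+1} |\bv|_{k+1,E}$ together with $L^4$-stability of $\nabla\PikK\varphi$, $\PikK\phi$ and Bramble--Hilbert on $\varphi$; while $I_3^E, J_3^E$ use the $L^4$ Bramble--Hilbert estimate $\|({\rm I} - \PikK)\varphi\|_{L^4(E)} \lesssim h_E^{\ell+1/2}|\varphi|_{\ell+1,E}$ together with $\|\bv\|_{L^4(E)} \lesssim \|\bv\|_{1,E}$, keeping $\bv$ unprojected.

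Summing over $E \in \Oh$ via discrete H\"older, the three sources match the three contributions in the claim: $(I_1^E, J_1^E)$ yield the leading $h^{m+1}\|\bv\|_{k+1,\O}\|\varphi\|_H\|\phi\|_H$ (the $h^2$ forced by $\phi$'s limited regularity balancing with $h^{m-1}$ from the polynomial approximation of $\bv\cdot\nabla\varphi$ in $\P_m$); $(I_2^E, J_2^E)$ yield $h^{m+2}\|\bv\|_{k+1,\O}\|\varphi\|_{\ell+1,\O}\|\phi\|_H$; and $(I_3^E, J_3^E)$ yield $h^{m+2}\|\bv\|_{\bV}|\varphi|_{\ell+1,\O}\|\phi\|_H$. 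The main technical obstacle is the careful balancing of Sobolev exponents with mesh-size powers so that the minimum order $m+1 = \min\{k+1, \ell\}$ emerges uniformly across all six terms, despite the strong asymmetry between $\varphi \in H^{\ell+1}$ and $\phi$ with only $H^2$ regularity; exploiting $L^2$-orthogonality in $I_1^E, J_1^E$ is precisely what converts the apparent $h^2$ ceiling coming from $\phi$ into the sharp $h^{m+1}$ order. The restriction $\bv \in \bZ$ is essential at the very first step to eliminate the $({\rm div}\,\bv)$ contributions that would otherwise spoil the estimate.
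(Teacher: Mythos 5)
Your proposal follows essentially the same route as the paper's proof: both start from the element-wise identities \eqref{key-c-bound}--\eqref{key-c-bound2}, use $\div \bv = 0$ to eliminate the divergence contributions, arrive at the same six-term decomposition (your $I_j^E, J_j^E$ are, up to a sign slip in $I_2^E, J_2^E$, the paper's $\sigma^E_{1,a},\dots,\sigma^E_{2,c}$), and invoke the same tools: $L^2$-orthogonality, H\"older, Proposition~\ref{approx:prop:L2}, and 2D Sobolev embeddings. The strategy is sound and does deliver an $O(h^{m+1})$ bound, which is all that is needed downstream in Lemma~\ref{lemma:phase:error}.

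However, your calibration of which group of terms produces which contribution is off, so the sketch as written does not reproduce the precise right-hand side of the lemma. First, the leading term $h^{m+1}\|\bv\|_{k+1,\O}\|\varphi\|_{H}\|\phi\|_{H}$ comes in the paper from $(I_2^E, J_2^E)$, via $\|({\rm I}-\PikKb)\bv\|_{0,E}\lesssim h_E^{k+1}|\bv|_{k+1,E}$ with $k+1\ge m+1$ paired with $L^4$-stability of the projections of $\varphi$ and $\phi$; your claim that these terms yield $h^{m+2}\|\varphi\|_{\ell+1,\O}$ would require $k\ge m+1$, which fails precisely in the natural pairing $k=\ell-1$ (where $m=k$). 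Second, your treatment of $(I_1^E, J_1^E)$ cannot produce $h^{m+1}\|\bv\|_{k+1,\O}\|\varphi\|_{H}\|\phi\|_{H}$: inserting $q\in\P_m(E)$ leaves a residual seminorm $|\bv\cdot\nabla\varphi|_{m-1,E}$ containing derivatives of $\varphi$ of order $m$ in $L^4$, which $\|\varphi\|_{H}=|\varphi|_{2,\O}$ does not control for general $m$. The paper instead inserts $({\rm I}-\PikK)$ on both factors, pairs $h_E^{m}\,|\bv\cdot\nabla(({\rm I}+\PikK)\varphi)|_{m,E}$ with $h_E^{2}|\phi|_{2,E}$, and accepts the stronger norm $\|\varphi\|_{\ell+1,\O}$ in exchange for the extra power of $h$, landing exactly on the second term of the claim. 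Finally, measuring $({\rm I}-\PikK)\varphi$ in $L^4$ in $(I_3^E, J_3^E)$ only gives $h_E^{\ell+1/2}$; keeping it in $L^2$ (order $h_E^{\ell+1}\ge h_E^{m+2}$) and placing $\nabla\PikK\phi$ in $L^4$ recovers the stated third term. None of this requires a new idea, only a re-balancing of exponents and norms within the framework you already set up.
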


\begin{proof}
Let us set
\begin{equation}
\label{eq:sigma0}
\begin{split}
	c(\bv;\varphi,\phi) - 
	c_h(\bv;\varphi,\phi) & = \frac{1}{2} \sum_{\E \in \O_h} 
	\left[  
	 \bigl( (\bv \cdot \nabla \varphi, \phi)_E - \widetilde{c}_{h}^E(\bv; \, \varphi, \phi) \bigr)  -
	\bigl ( (\bv \cdot \nabla \phi, \varphi)_E - \widetilde{c}_{h}^E(\bv; \, \phi, \varphi) \bigr) \right]
	\\
	& =:\frac{1}{2} \sum_{\E \in \O_h} (\sigma^{\E}_1-  \sigma^{\E}_2 ).
	\end{split}
\end{equation}
Thus, using the derivations in \eqref{key-c-bound} and  \eqref{key-c-bound2}, employing the orthogonality of the $L^2$-projection  and recalling that $\bv \in \bZ$, we obtain
\begin{equation}
\label{eq_sigma1}
\begin{aligned}
\sigma_1^E &=  
(\bv \cdot \nabla \varphi, ({\rm I} - \PikK) \phi)_E
+ \bigl( ({\rm I} - \PikKb) \bv \cdot \nabla \PikK \varphi, \PikK \phi   \bigr)_{\E} + 
\bigl( \bv (\PikK - {\rm I}) \varphi , \nabla  \PikK \phi \bigr)_{E}
\\
&= (({\rm I} - \PikK) (\bv \cdot \nabla \varphi), ({\rm I} - \PikK) \phi)_E
+ \bigl( ({\rm I} - \PikKb) \bv \cdot \nabla \PikK \varphi, \PikK \phi   \bigr)_{E} + 
\bigl( \bv (\PikK - {\rm I}) \varphi , \nabla  \PikK \phi \bigr)_{E}
\\
& =: \sigma^E_{1,a} +  \sigma^E_{1,b} +  \sigma^E_{1,c} 
\end{aligned}
\end{equation}
and
\begin{equation}
\label{eq_sigma2}
\begin{aligned}
\sigma_2^E & =
(\bv \cdot \nabla \phi, ({\rm I} - \PikK) \varphi)_E
+ \bigl( ({\rm I} - \PikKb) \bv \cdot \nabla \PikK \phi, \PikK \varphi   \bigr)_{\E} + 
\bigl( \bv (\PikK - {\rm I}) \phi , \nabla  \PikK \varphi \bigr)_{E}
\\
&= \bigl(({\rm I}-\PikK )(\bv \cdot\nabla  \PikK \varphi \bigr), (\PikK - {\rm I} ) \phi )_{E} 
+ \bigl( ({\rm I} - \PikKb) \bv \cdot  \nabla \PikK \phi, \PikK \varphi  \bigr)_{\E} 
+
 (\bv \cdot \nabla \phi, ({\rm I} - \PikK) \varphi)_E
\\
& =: \sigma^E_{2,a} +  \sigma^E_{2,b} +  \sigma^E_{2,c} \,.
\end{aligned}
\end{equation}
In what follows, we will bound the six terms above.
Applying adequately the H\"{o}lder inequality and Proposition \ref{approx:prop:L2} we infer 
\begin{equation}
\label{eq:sigma1a}
\begin{aligned}
\sigma^E_{1,a} - \sigma^E_{2,a}
&\lesssim 
 \|({\rm I} - \PikK) (\bv \cdot \nabla ({\rm I} + \PikK) \varphi) \|_{0,E} \, \|({\rm I} - \PikK) \phi \|_{0,E} 
\\
&
\lesssim h_E^{m} \, |\bv \cdot \nabla (({\rm I} + \PikK) \varphi)|_{m,E} \, h_E^2 |\phi|_{2,E}
\\
& \lesssim
h_E^{m+2} \, \|\bv \|_{W^{m,4}(E)} \, \|\nabla (({\rm I} + \PikK) \varphi) \|_{W^{m,4}(E)} \, |\phi|_{2,E} 
\\
& \lesssim
h_E^{m+2} \, \|\bv \|_{W^{m,4}(E)} \, \|\varphi \|_{W^{m+1,4}(E)}|\phi|_{2,E}
\\
& \lesssim
h_E^{m+2} \, \|\bv \|_{W^{k,4}(E)} \, \|\varphi \|_{W^{\ell,4}(E)} \,|\phi|_{2,E} \,;
\\
\\
\sigma^E_{1,b} - \sigma^E_{2,b} &
\lesssim \| ({\rm I} - \PikKb) \bv \|_{0,E} 
\left( \|\nabla \PikK \varphi \|_{L^4(E)}  \| \PikK \phi \|_{L^4(E)} +
\|\nabla \PikK \phi \|_{L^4(E)}  \| \PikK \varphi \|_{L^4(E)}
\right)
\\
&\lesssim h_E^{k+1} \, |\bv |_{k+1,E} \, \|\varphi \|_{W^{1,4}(E)} \, \|  \phi \|_{W^{1,4}(E)}
\\
&\lesssim h_E^{m+1} \, |\bv |_{k+1,E} \, \|\varphi \|_{W^{1,4}(E)}  \|  \phi \|_{W^{1,4}(E)} \,;
\\
\\
\sigma^E_{1,c} - \sigma^E_{2,c} 
&\lesssim \| \bv \|_{L^4(E)} 
 \| ({\rm I} - \PikK) \varphi \|_{L^2(E)}
\|\nabla (({\rm I } + \PikK) \phi )\|_{L^4(E)} 
\\
& \lesssim h_E^{\ell+1} \, \| \bv \|_{L^4(E)} \,  |\varphi|_{\ell+1,E}  
\, \|  \phi \|_{W^{1,4}(E)}
\\
& \lesssim h_E^{m+2} \, \| \bv \|_{L^4(E)} \,  |\varphi|_{\ell+1,E}  
\|  \phi \|_{W^{1,4}(E)} \,.
\end{aligned}
\end{equation} 
Thus, employing bounds \eqref{eq:sigma1a},  equations \eqref{eq_sigma1}, \eqref{eq_sigma2} and \eqref{eq:sigma0}, we infer
\[
\begin{aligned}
c(\bv;\varphi,\phi) - c_h(\bv;\varphi,\phi) &\lesssim
\sum_{\E \in \O_h} h_E^{m+1} \,
|\bv |_{k+1,E}   \|\varphi \|_{W^{1,4}(E)}    \|  \phi \|_{W^{1,4}(E)} +
\\
& + \sum_{\E \in \O_h} h_E^{m+2}  \bigl( 
\|\bv \|_{W^{k,4}(E)}  \|\varphi \|_{W^{\ell,4}(E)},|\phi|_{2,E} +
\| \bv \|_{L^4(E)} \,  |\varphi|_{\ell+1,E}   
\|  \phi \|_{W^{1,4}(E)}
 \bigr) \,.
 \end{aligned}
\]
Therefore, from the H\"{o}lder inequality (for sequences) and the Sobolev embedding theorem, the thesis follows.
\end{proof}

Now, we have the following result for the forms $d(\cdot;\cdot,\cdot)$ and $d_h(\cdot;\cdot,\cdot)$.
\begin{lemma}\label{lemma:crimen:d}
Let  Assumption  \ref{Assump:mesh} hold.
Let  $\varphi\in  H\cap H^{\ell+1}(\O)$, then for all $\bv \in \bV$ it holds that
\begin{equation*}
d(\phi;\phi,\bv)-d_h(\phi;\phi,\bv)
\lesssim h^{m}\big(\|\phi\|_{\ell,\O}+\|\phi\|_{H} \big)
		\|\phi\|_{\ell+1,\O} \, \|\bv\|_{\bV} \,.
	\end{equation*}	
\end{lemma}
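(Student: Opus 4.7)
\textbf{Plan for Lemma~\ref{lemma:crimen:d}.}

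The plan is to decompose the variational crime, element by element, into three contributions isolating the errors due to the three polynomial projections $\PiMtwoK$, $\PikMoneKp$, $\PikKb$ appearing in $d_h^E$. Setting $\vec F := \Delta\phi\,\nabla\phi$ and $\vec F_h := \PiMtwoK\Delta\phi\cdot\PikMoneKp\nabla\phi$, by adding and subtracting intermediate quantities I would write, on every $E \in \Omega_h$,
\[
d^E(\phi;\phi,\bv) - d_h^E(\phi;\phi,\bv) = B_1^E + B_2^E + B_3^E,
\]
with $B_1^E := ((I-\PiMtwoK)\Delta\phi\cdot\nabla\phi,\bv)_E$, $B_2^E := (\PiMtwoK\Delta\phi\cdot(I-\PikMoneKp)\nabla\phi,\bv)_E$, and $B_3^E := (\vec F_h,(I-\PikKb)\bv)_E$.

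For $B_1^E$ and $B_2^E$, the idea is to apply H\"older with an $L^2\cdot L^4\cdot L^4$ split and combine the Bramble--Hilbert estimates of Proposition~\ref{approx:prop:L2} (giving $\|(I-\PiMtwoK)\Delta\phi\|_{L^2(E)}\lesssim h^{\ell-1}|\phi|_{\ell+1,E}$ and $\|(I-\PikMoneKp)\nabla\phi\|_{L^4(E)}\lesssim h^{\ell-1/2}|\phi|_{\ell+1,E}$) with the standard $L^4$-stability of the $L^2$-projection on shape-regular polygons and the 2D Sobolev embedding $H^1\hookrightarrow L^4$ to control the remaining $\nabla\phi$ and $\bv$ factors. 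Since $\ell-1,\ell-1/2\ge m$, summation over elements by Cauchy--Schwarz delivers contributions of the claimed form $h^m\|\phi\|_H\|\phi\|_{\ell+1,\O}\|\bv\|_{\bV}$.

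The main obstacle is $B_3^E$: since $\bv$ carries only $H^1$ regularity, $\|(I-\PikKb)\bv\|_{0,E}\lesssim h\,|\bv|_{1,E}$ provides only one power of $h$, whereas we need $h^m$. To recover the missing $h^{m-1}$ I would exploit the $L^2$-orthogonality of $\PikKb$ to rewrite $B_3^E=((I-\PikKb)\vec F_h,(I-\PikKb)\bv)_E$, and then bound $\|(I-\PikKb)\vec F_h\|_{0,E}$ by the triangle inequality as $\|\vec F_h-\vec F\|_{0,E}+\|(I-\PikKb)\vec F\|_{0,E}$. The first summand is controlled by the same H\"older/Bramble--Hilbert arguments used for $B_1^E,B_2^E$, producing an $O(h^{\ell-1})$ factor. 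The second is estimated by Bramble--Hilbert for $\PikKb$ at order $s=m-1$, yielding $h^{m-1}|\vec F|_{m-1,E}$; the seminorm $|\Delta\phi\,\nabla\phi|_{m-1,E}$ would then be bounded via a Leibniz splitting in which one factor is placed in $L^\infty$ (through the 2D Sobolev embedding $H^s\hookrightarrow L^\infty$ for $s>1$, valid since $\phi\in H^{\ell+1}$ with $\ell\ge 2$) and the other in $L^2$, producing precisely the factor $(\|\phi\|_{\ell,\O}+\|\phi\|_H)\,\|\phi\|_{\ell+1,\O}$ required by the lemma.

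The low-regularity case $m=1$ (which escapes the Bramble--Hilbert step above, since $s=m-1=0$ has no decay) would be handled separately by the direct bound $\|\vec F_h\|_{0,E}\le\|\PiMtwoK\Delta\phi\|_{L^4(E)}\|\PikMoneKp\nabla\phi\|_{L^4(E)}\lesssim\|\phi\|_{H^3(E)}\|\phi\|_H$, which combined with $\|(I-\PikKb)\bv\|_{0,E}\lesssim h\,|\bv|_{1,E}$ already delivers the $h^1$ rate (noting that for $\ell=2$ one has $\|\phi\|_H=\|\phi\|_\ell$ and $\|\phi\|_{H^3}=\|\phi\|_{\ell+1}$). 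A final summation over elements by the discrete Cauchy--Schwarz inequality, absorbing $\sup_E$-type factors into global $H^2$- and $H^\ell$-norms of $\phi$, would then complete the proof.
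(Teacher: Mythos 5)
Your proposal is correct and follows essentially the same route as the paper's proof: a three-term telescoping of the projection errors, the $L^2$-orthogonality trick to double the projection defect on the $({\rm I}-\PikKb)\bv$ term and recover the extra powers of $h$, and H\"older plus Bramble--Hilbert plus Sobolev embeddings to close. The only (immaterial) difference is that you test the projection differences against $\bv$ and apply the orthogonality step to $\vec F_h$ rather than to $\Delta\phi\,\nabla\phi$ directly, which costs you one additional triangle inequality handled by the same estimates; also, your separate treatment of $m=1$ is unnecessary since the $s=m-1=0$ case of Bramble--Hilbert is just $L^2$-stability and already yields the $h^{m}$ rate.
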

\begin{proof}
By using the definition of the forms $d(\cdot;\cdot,\cdot)$ and $d_h(\cdot;\cdot,\cdot)$ we infer
\begin{equation*}
	\begin{split}
		d(\phi;\phi,\bv)&-d_h(\phi;\phi,\bv) 
		= \sum_{\E \in \O_h}
		\left( ( \Delta \phi\nabla \phi, \bv)_{\E}-  \big(\PiMtwoK \Delta \phi  \, \PikMoneKp \nabla\phi , \PikKb \bv \bigr)_{\E} 
		\right)
		\\
		&=\sum_{\E \in \O_h}  ( \Delta \phi\nabla \phi, \bv-\PikKb \bv)_{\E} 
		+   \sum_{\E \in \O_h}\big(\Delta \phi\nabla \phi- \PiMtwoK \Delta \phi  \PikMoneKp \nabla\phi, \PikKb \bv \bigr)_{\E}\\
	&	=\sum_{\E \in \O_h} ( ({\rm I}-\PikKb)(\Delta \phi\nabla \phi), ({\rm I}- \PikKb) \bv )_{\E}
	+  \sum_{\E \in \O_h} \big(\Delta \phi  \: ({\rm I}-\PikMoneKp)\nabla\phi, \: \PikKb \bv \bigr)_{\E} +
\\		&\quad
		 +  \sum_{\E \in \O_h} \big(({\rm I}- \PiMtwoK) \Delta \phi \: \PikMoneKp \nabla\phi, \: \PikKb \bv \bigr)_{\E}
\end{split}
\end{equation*}
thus, employing H\"{o}lder inequality and Proposition \ref{approx:prop:L2},  we have
\begin{equation*}
\begin{split}
&d(\phi;\phi,\bv)-d_h(\phi;\phi,\bv) 		 
		 \leq \sum_{\E \in \O_h} \|({\rm I}-\PikKb)(\Delta \phi\nabla \phi)\|_{0,\E} \|({\rm I}- \PikKb) \bv\|_{\E}+
		 \\
		&  + \sum_{\E \in \O_h} \left( \|\Delta \phi\|_{\E} \|({\rm I}-\PikMoneKp)\nabla\phi\|_{L^4(\E)}  
		+  \|({\rm I}- \PiMtwoK) \Delta \phi\|_{0,\E} \|\PikMoneKp \nabla\phi\|_{L^4(\E)} \right) \|\PikKb \bv\|_{L^4(\E)}
		\\
		& \lesssim \sum_{\E \in \O_h}  h_{\E}^{m-1}|\Delta \phi\nabla \phi|_{m-1,\E} \, h_{\E}| \bv|_{1,\E} + 
		\\
		& \quad  +  \sum_{\E \in \O_h}
		  h_{\E}^{\ell-1} \left( \|\Delta \phi\|_{0,\E} |\nabla\phi|_{W^{\ell-1,4}(\E)} +  |\Delta \phi|_{\ell-1,\E} \|\nabla\phi\|_{L^4(\E)} \right) \| \bv\|_{L^4(\E)} 
		\\
		  & \lesssim \sum_{\E \in \O_h} h_E^{m}\bigl(\|\phi \|_{W^{m+1,4}(E)} \| \phi\|_{W^{m,4}(E)} | \bv|_{1,E}
		 + \left( |\phi|_{2,E} |\phi|_{W^{\ell,4}(E)}  + |\phi|_{\ell+1,E} |\phi|_{W^{1,4}(E)} \right) \| \bv\|_{L^4(E)} \bigr) \,.
	\end{split}
\end{equation*}

Then, the desired result follows from  the H\"{o}lder inequality (for sequences) and Sobolev inclusions.

\end{proof}

We conclude the subsection with the following bound involving the $W^{s,\infty}$-norms.  
\begin{lemma}\label{lemma:crimen:d-B}
Let  Assumption  \ref{Assump:mesh} hold.
Let   $\varphi\in  H \cap W^{2,\infty}(\O)$ and $\phi\in H \cap  W^{1, \infty}(\O)$, then  for all $\bv\in \bV$ it holds that
\begin{equation*}
d_h(\varphi;\varphi,\bv)-d_h(\phi;\phi,\bv)
\lesssim \big(\|\varphi\|_{W^{2,\infty}(\O)}+\|\phi\|_{W^{1,\infty}(\O)} \big) \|\varphi-\phi\|_{2, \O}\|\bv\|_{0,\O}\,. 
	\end{equation*}
\end{lemma}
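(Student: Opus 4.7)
The plan is to start from the elementwise definition of $d_h(\cdot;\cdot,\cdot)$ and expand the difference with a telescoping algebraic manipulation that keeps the regularity requirements aligned with the statement. Explicitly,
\begin{equation*}
d_h(\varphi;\varphi,\bv)-d_h(\phi;\phi,\bv) = \sum_{\E \in \O_h}
\bigl( \PiMtwoK \Delta\varphi\,\PikMoneKp\nabla\varphi - \PiMtwoK \Delta\phi\,\PikMoneKp\nabla\phi,\, \PikKb \bv\bigr)_E.
\end{equation*}
The key algebraic step is the splitting
\begin{equation*}
\PiMtwoK \Delta\varphi\,\PikMoneKp\nabla\varphi - \PiMtwoK \Delta\phi\,\PikMoneKp\nabla\phi
= \PiMtwoK \Delta\varphi\,\PikMoneKp\nabla(\varphi-\phi) + \PiMtwoK \Delta(\varphi-\phi)\,\PikMoneKp\nabla\phi,
\end{equation*}
which places the quantity requiring an $L^{\infty}$ bound on $\varphi$ next to $\Delta\varphi$ (forcing the $W^{2,\infty}$ norm of $\varphi$) and the quantity requiring an $L^{\infty}$ bound on $\phi$ next to $\nabla\phi$ (forcing only the $W^{1,\infty}$ norm of $\phi$). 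This is precisely the split that matches the two different regularity indices in the statement.

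Next, I would apply H\"older's inequality on each element in the form $(ab,c)_E \leq \|a\|_{L^\infty(E)}\|b\|_{0,E}\|c\|_{0,E}$, obtaining
\begin{align*}
d_h(\varphi;\varphi,\bv)-d_h(\phi;\phi,\bv)
& \leq \sum_{\E\in\O_h} \|\PiMtwoK \Delta\varphi\|_{L^\infty(E)}\,\|\PikMoneKp\nabla(\varphi-\phi)\|_{0,E}\,\|\PikKb \bv\|_{0,E} \\
& \quad + \sum_{\E\in\O_h} \|\PikMoneKp\nabla\phi\|_{L^\infty(E)}\,\|\PiMtwoK \Delta(\varphi-\phi)\|_{0,E}\,\|\PikKb \bv\|_{0,E}.
\end{align*}
I would then invoke the $L^\infty$- and $L^2$-stability of the polynomial $L^2$-projections, which are immediate from Proposition~\ref{approx:prop:L2} applied with $r=s=0$, to replace each projection by the underlying function in the respective norm. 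This yields the elementwise bound
\begin{equation*}
\bigl(\|\Delta\varphi\|_{L^\infty(E)} \|\nabla(\varphi-\phi)\|_{0,E} + \|\nabla\phi\|_{L^\infty(E)}\|\Delta(\varphi-\phi)\|_{0,E}\bigr) \|\bv\|_{0,E}.
\end{equation*}

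Finally I would close the argument by summing over $E\in\O_h$ via the discrete H\"older inequality (taking the $L^\infty$ norms out as global maxima and combining the $L^2$ pieces by Cauchy--Schwarz over the elements), giving
\begin{equation*}
\bigl(\|\varphi\|_{W^{2,\infty}(\O)} + \|\phi\|_{W^{1,\infty}(\O)}\bigr)\, \|\varphi-\phi\|_{2,\O}\,\|\bv\|_{0,\O},
\end{equation*}
as claimed. The only mildly delicate point is the $L^\infty$-stability of $\PiMtwoK$ and $\PikMoneKp$ on general shape-regular polygons, but under Assumption~\ref{Assump:mesh} this is a routine consequence of Proposition~\ref{approx:prop:L2} combined with the triangle inequality, so no serious obstacle arises. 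The proof is essentially algebraic splitting plus H\"older; the main conceptual step is choosing the splitting above rather than the symmetric one, which would otherwise force a $W^{2,\infty}$-type regularity on $\phi$ as well.
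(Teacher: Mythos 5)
Your proposal is correct and follows essentially the same route as the paper: the identical telescoping split $d_h(\varphi;\varphi,\bv)-d_h(\phi;\phi,\bv)=d_h(\varphi;\varphi-\phi,\bv)+d_h(\varphi-\phi;\phi,\bv)$, followed by elementwise H\"older and the $L^2$- and $L^\infty$-stability of the polynomial projections. The paper states this in three lines without spelling out the projection-stability details, but the argument is the same.
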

\begin{proof}
Simple computations yield
\begin{equation*}
\begin{split}
d_h(\varphi;\varphi,\bv)-d_h(\phi;\phi,\bv)	
&= d_h(\varphi;\varphi-\phi,\bv)+d_h(\varphi-\phi;\phi,\bv)
\\
& \lesssim \|\varphi\|_{W^{2,\infty}(\O)}\|\varphi-\phi\|_{1,\O} \|\bv\|_{0,\O} +  \|\phi\|_{W^{1,\infty}(\O)}\|\varphi-\phi\|_{2,\O} \|\bv\|_{0,\O}\\
& \lesssim \big(\|\varphi\|_{W^{2,\infty}(\O)}+\|\phi\|_{W^{1,\infty}(\O)} \big) \|\varphi-\phi\|_{2,\O}\|\bv\|_{0,\O} \,,
\end{split}	
\end{equation*}
where we have used the definition of the discrete form $d_h(\cdot;\cdot,\cdot)$ and the continuity of the $L^2$-projectors with respect to the $L^2$- and $L^{\infty}$-norms.
 \end{proof}

\subsection{Ritz-projections and their approximation properties}\label{subsec:Ritz}

In this part we will introduce Ritz-type projections needed for our error analysis, together with their approximation properties. 
Let us introduce the following operators:

\begin{itemize}

\item the \textbf{Stokes projection} $\mathcal{S}_h \colon \bZ  \to \Zh$ defined for all $\bv \in \bZ$ by
\begin{equation}\label{Stokes:projector}
	a_{\bnabla,h} (\mS_h\bv, \bz_h) = a_{\bnabla}(\bv, \bz_h ) \qquad  \text{for all  $\bz_h \in \Zh$.} 
\end{equation}

\item the \textbf{Cahn--Hilliard elliptic projection} 
$\mP_h \colon H \to \Hh$ 
defined for all $\chi \in H$ by 
\begin{equation}\label{Cahn:projector}
		B_{h} (\mP_h\chi,\phi_h) = 	B (\chi,\phi_h)      \qquad  \text{for all  $\phi_h \in \Hh$,}
\end{equation} 
where, recalling that $\varphi$ denotes the continuous phase solution,  $B (\cdot,\cdot)$  and  $B_{h} (\cdot,\cdot)$ are the bilinear forms given by 
\begin{align}
	B(\psi,\phi) &:=   a_{\bD}(\psi,\phi) +   \varepsilon^{-2} r(\varphi; \psi, \phi)  
	+ \alpha  m(\psi,\phi), &\qquad \forall \psi, \phi \in H \,, \label{def:B}\\
	B_{h} (\psi_h,\phi_h) &:=   a_{\bD,h}(\psi_h,\phi_h )+   \varepsilon^{-2}\bar{r}_h(\varphi; \psi_h, \phi_h)  
	+ \alpha  m(\psi_h,\phi_h)       &\quad  \forall  \psi_h, \phi_h \in \Hh\,, \label{def:B_h}
\end{align}
with
\begin{equation}\label{form:rbar}
	\overline{r}_h(\eta; \psi, \phi) := \sum_{\E \in \O_h}  \big( f'(\eta) \PikMoneKp  \nabla \psi,  \PikMoneKp  \nabla \phi \big)_{E} \qquad \forall \eta, \psi, \phi \in H.   
\end{equation}	 
In definitions \eqref{def:B} and \eqref{def:B_h} $\alpha$ is a sufficiently large positive parameter such the the form $B_h(\cdot, \cdot)$ will be elliptic, i.e.
\begin{equation}
\label{eq:Bcoer}
\| \phi_h\|_{2, \O} \lesssim
B_{h} (\phi_h,\phi_h)
\quad \text{for all $\phi_h \in \Hh$.}
\end{equation}
\end{itemize}
We observe that both projection operators are well defined as a consequence of item 
$i)$ in Proposition~\ref{lemma:bound:ch} and \eqref{eq:Bcoer}.
We now state the following result.
\begin{lemma}\label{lemma:error:S_h}
Let Assumptions \ref{Assump:mesh} and  \ref{assump:reg:add} hold.
Let $ \mS_h \bu \in \bZ^k_h$ be the Stokes projection of the continuous velocity solution $\bu$.
Then for a.e. $t\in [0,T]$ the following holds
\begin{equation*}
	\|\bu - \mS_h \bu\|_{0,\Omega} + h 	|\bu - \mS_h \bu|_{1,\Omega} 
	\lesssim h^{k+1}  |\bu|_{k+1,\Omega} \,.	
\end{equation*}
\end{lemma}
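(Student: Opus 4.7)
The plan is to combine a standard energy-type argument for the $H^1$-seminorm estimate with an Aubin--Nitsche duality argument for the $L^2$-bound, following a by now classical template for VEM, paying particular attention to the variational crime inherent to the discrete bilinear form $a_{\bnabla,h}(\cdot,\cdot)$ and to the fact that the analysis takes place within the divergence-free subspaces $\bZ$ and $\Zh$.

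\textbf{$H^1$-seminorm estimate.} I would introduce the divergence-free interpolant $\bu_I \in \Zh$ provided by Proposition~\ref{approx:virtual:velo:phase}, and the elementwise polynomial approximation $\bu_\pi$ of $\bu$ of degree $k$ from Proposition~\ref{approx:prop:L2}. Setting $\bw_h := \mS_h \bu - \bu_I \in \Zh$, the coercivity of $a_{\bnabla,h}(\cdot,\cdot)$ (item $i)$ of Proposition~\ref{lemma:bound:ch}) and the defining relation \eqref{Stokes:projector} give
\[
|||\bw_h|||_{1,\O}^2
= a_{\bnabla,h}(\mS_h \bu - \bu_I,\, \bw_h)
= a_{\bnabla}(\bu,\bw_h) - a_{\bnabla,h}(\bu_I,\bw_h).
\]
Inserting $\pm\,\bu_\pi$ elementwise and using the polynomial consistency of $a_{\bnabla,h}^E$ on $\bP_k(E)$, the right-hand side can be rewritten as $\sum_{E \in \O_h}\bigl[a_{\bnabla}^E(\bu - \bu_\pi,\bw_h) + a_{\bnabla,h}^E(\bu_\pi - \bu_I,\bw_h)\bigr]$. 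Applying the continuity of both forms together with the approximation estimates for $\bu_\pi$ and $\bu_I$, one obtains $|||\bw_h|||_{1,\O} \lesssim h^k |\bu|_{k+1,\O}$, and the desired $H^1$-seminorm bound on $\bu - \mS_h \bu$ then follows by the triangle inequality and the interpolation error for $\bu_I$.

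\textbf{$L^2$-norm estimate via duality.} I would introduce the continuous dual Stokes problem: find $\boldsymbol{\psi} \in \bZ$ such that $a_{\bnabla}(\bv,\boldsymbol{\psi}) = (\bu - \mS_h \bu, \bv)_{\O}$ for all $\bv \in \bZ$, which — assuming the standard elliptic regularity on $\Omega$ — admits a solution satisfying $\|\boldsymbol{\psi}\|_{2,\O} \lesssim \|\bu - \mS_h \bu\|_{0,\O}$. Testing with $\bv = \bu - \mS_h \bu \in \bZ$, using symmetry, and adding and subtracting $a_{\bnabla,h}(\mS_h \bu,\boldsymbol{\psi}_I) = a_{\bnabla}(\bu,\boldsymbol{\psi}_I)$, where $\boldsymbol{\psi}_I \in \Zh$ is the divergence-free interpolant of $\boldsymbol{\psi}$, yields the identity
\[
\|\bu - \mS_h \bu\|_{0,\O}^2
= a_{\bnabla}(\bu - \mS_h \bu,\, \boldsymbol{\psi} - \boldsymbol{\psi}_I)
+ \bigl[a_{\bnabla,h}(\mS_h \bu,\boldsymbol{\psi}_I) - a_{\bnabla}(\mS_h \bu,\boldsymbol{\psi}_I)\bigr].
\]
The first term is controlled by the $H^1$-estimate of the previous step combined with $|\boldsymbol{\psi} - \boldsymbol{\psi}_I|_{1,\O} \lesssim h\|\boldsymbol{\psi}\|_{2,\O}$. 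For the consistency term, the strategy is to insert elementwise polynomial projections $\bu_\pi$ of $\mS_h \bu$ and $\boldsymbol{\psi}_\pi$ of $\boldsymbol{\psi}_I$, exploit the polynomial consistency of both $a_{\bnabla,h}^E$ and $a_{\bnabla}^E$ in both arguments to reduce the expression to $(a_{\bnabla,h}^E - a_{\bnabla}^E)(\mS_h \bu - \bu_\pi,\, \boldsymbol{\psi}_I - \boldsymbol{\psi}_\pi)$, and then use continuity and the approximation properties of $\bu_\pi$, $\bu_I$, $\boldsymbol{\psi}_\pi$, $\boldsymbol{\psi}_I$. Both contributions end up of order $h^{k+1} |\bu|_{k+1,\O} \|\boldsymbol{\psi}\|_{2,\O}$, and inserting the regularity bound for $\boldsymbol{\psi}$ and dividing by $\|\bu - \mS_h \bu\|_{0,\O}$ closes the argument.

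\textbf{Main obstacle.} The genuinely delicate point is the treatment of the variational crime $a_{\bnabla,h} - a_{\bnabla}$ inside the duality step: Galerkin orthogonality is unavailable, and the consistency defect must be unwound by inserting polynomial projections in \emph{both} slots elementwise and invoking twofold polynomial consistency, so as to expose two approximation factors and recover the full order $h^{k+1}$. Two auxiliary but essential ingredients are the elliptic regularity of the dual Stokes problem (standard on convex polygonal $\Omega$) and the existence of an interpolation operator mapping $\bZ$ into $\Zh$, which is precisely the divergence-preserving property guaranteed by the final assertion of Proposition~\ref{approx:virtual:velo:phase}.
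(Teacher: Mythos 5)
Your proposal is correct and follows exactly the standard route: the paper itself does not spell out this proof but simply refers to \cite{Vacca-parabolico,VK2023}, where the argument is precisely the energy estimate via the divergence-free interpolant and polynomial consistency for the $H^1$ bound, followed by the Aubin--Nitsche duality with the dual Stokes problem (and its $H^2$ regularity on convex polygons) for the $L^2$ bound. Your handling of the consistency defect by inserting polynomial projections in both slots is the right way to recover the extra power of $h$, so no gap to report.
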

\begin{proof}
The proof can be obtained by following similar arguments as in \cite{Vacca-parabolico,VK2023}.
\end{proof}

On the other hand, error estimates have been established for analogous Cahn–Hilliard-type formulations based on nonconforming Morley finite element and virtual element methods \cite{EF89,DH2024}, as well as for the lowest-order $C^1$-conforming virtual element method \cite{ABSV-SINUM2016}. For the sake of completeness, we provide in the forthcoming analysis error bounds for the Cahn–Hilliard elliptic projection defined in~\eqref{Cahn:projector} within our high-order $C^1$-conforming framework. We emphasize that, although the overall strategy is related, the proofs differ due to the intrinsic features of the respective numerical schemes.

We also rely on the following regularity assumption (see for instance \cite{ABSV-SINUM2016}).

\begin{assumption}[Assumption on the regularity of the auxiliary problem]
\label{Assump:reg:aux}
	Let $\mG \in H^{1}(\Omega)$ and let $\chi \in H$ be the solution of auxiliary the problem
	\begin{equation}
	\label{aux:problem}
		B(\chi, w) = (\mG,w )_{\O} + (\nabla \mG,\nabla w )_{\O}
		\qquad \forall w \in H.
	\end{equation}
	Then, there exists a positive constant $C_{\Omega}$, depending only on $\Omega$,
	such that
	\begin{equation*}
		\| \chi \|_{3,\Omega} \le C_{\Omega} \, \| \mG \|_{1,\O}.
	\end{equation*}
\end{assumption}

We note that, in the particular case where $\Omega$ is a rectangular domain,
Assumption~\ref{Assump:reg:aux} holds; see, for example, \cite[Theorem~A1]{EF89}.

We have the following approximation result for the elliptic projection $\mP_h$ defined in \eqref{Cahn:projector}.

\begin{lemma}\label{lemma:error:P_h}
Let Assumptions \ref{Assump:cont:phase}, \ref{Assump:mesh}, \ref{assump:reg:add}, and \ref{Assump:reg:aux} hold. 
Let $\mP_h \varphi \in \Hh$ be the Cahn-Hilliard elliptic projection of the continuous phase solution $\varphi$.
Then for a.e. $t\in [0,T]$ the following hold 
\begin{align*}
\|\varphi - \mP_h \varphi\|_{1,\O} +	h\|\varphi - \mP_h \varphi\|_{2,\O} &\leq  C(\varphi, f(\varphi), \varepsilon^{-2}, C_T, C_{\O}) h^{\ell} \,,
\\ 
\|\partial_t(\varphi - \mP_h \varphi)\|_{1,\O} +	h\|\partial_t(\varphi - \mP_h \varphi)\|_{2,\O} & \leq C(\varphi, f(\varphi), \varepsilon^{-2}, C_T, C_{\O}) h^{\ell}
\end{align*}	
\end{lemma}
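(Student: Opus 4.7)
The plan is to adapt the standard VEM elliptic projection error analysis (cf.~\cite{ABSV-SINUM2016,CM-CAMWA2016}) to the present high-order $C^1$-conforming framework, with particular attention to the nonlinear weight $f'(\varphi)$ entering $\overline{r}_h$. The main steps are: (i) a uniform coercivity bound for $B_h$ on $\Hh$; (ii) an energy ($H^2$) estimate via a Strang-type decomposition; (iii) an Aubin--Nitsche $H^1$ estimate using Assumption~\ref{Assump:reg:aux}; (iv) the time-derivative bounds obtained by differentiating \eqref{Cahn:projector} in $t$. For (i) I would observe that $f'(\varphi)=3\varphi^2-1\ge -1$ while Assumption~\ref{Assump:cont:phase} yields $\|f'(\varphi)\|_{L^\infty}\lesssim C_T^2$; hence, combining the continuity of $\PikMoneKp$ with the interpolation inequality \eqref{ineq:Hi}, the negative contribution of $\varepsilon^{-2}\overline{r}_h$ can be absorbed into $a_{\bD,h}+\alpha\, m$ for $\alpha$ sufficiently large, and property $ii)$ of Proposition~\ref{lemma:bound:ch} then delivers \eqref{eq:Bcoer}. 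For the $H^2$ bound I would decompose $\varphi-\mP_h\varphi=(\varphi-\varphi_I)+(\varphi_I-\mP_h\varphi)$ where $\varphi_I\in\Hh$ is the virtual interpolant (Proposition~\ref{approx:virtual:velo:phase}), bounding the first term directly. Writing $e_h:=\varphi_I-\mP_h\varphi$, coercivity and \eqref{Cahn:projector} give
\begin{equation*}
\|e_h\|_{H}^{2}\lesssim B_h(e_h,e_h)=B_h(\varphi_I-\varphi_\pi,e_h)+\bigl(B_h(\varphi_\pi,e_h)-B(\varphi_\pi,e_h)\bigr)+B(\varphi_\pi-\varphi,e_h),
\end{equation*}
where $\varphi_\pi\in\P_\ell(E)$ is an elementwise polynomial approximation of $\varphi$. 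The first and third summands are handled by continuity of $B_h$, $B$ and Bramble--Hilbert estimates; the second uses polynomial consistency of $a_{\bD,h}$ and $m_h$ and a careful control of the variational crime $\overline{r}_h-r$ via the approximation property of $\PikMoneKp$ on $\nabla\varphi_\pi$ and the $L^\infty$-boundedness of $f'(\varphi)$. This yields $\|\varphi-\mP_h\varphi\|_{2,\O}\lesssim h^{\ell-1}$.

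For the $H^1$ bound I would use Aubin--Nitsche with the auxiliary problem \eqref{aux:problem}, choosing $\mathcal{G}$ proportional to $\varphi-\mP_h\varphi$ so that Assumption~\ref{Assump:reg:aux} produces $\chi\in H^3(\O)$ with $\|\chi\|_{3,\O}\lesssim\|\varphi-\mP_h\varphi\|_{1,\O}$ and $\|\varphi-\mP_h\varphi\|_{1,\O}^{2}\lesssim B(\chi,\varphi-\mP_h\varphi)$. Inserting the interpolant $\chi_I\in\Hh$ (with $\|\chi-\chi_I\|_{2,\O}\lesssim h\|\chi\|_{3,\O}$) and rewriting
\begin{equation*}
B(\chi,\varphi-\mP_h\varphi)=B(\chi-\chi_I,\varphi-\mP_h\varphi)+\bigl(B(\chi_I,\varphi)-B_h(\chi_I,\mP_h\varphi)\bigr)+(B_h-B)(\chi_I,\mP_h\varphi),
\end{equation*}
the first term is controlled at order $h\cdot h^{\ell-1}=h^\ell$ by the already proved $H^2$ bound and continuity of $B$, while the remaining two contributions combine (via the defining identity \eqref{Cahn:projector} applied with a polynomial-like argument) into a standard VEM consistency error that, thanks to the extra $h$ supplied by $\chi_I$, is also of order $h^\ell$. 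Dividing by $\|\varphi-\mP_h\varphi\|_{1,\O}$ gives the claim.

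For the time-derivative estimates I would differentiate \eqref{Cahn:projector} in $t$. Since $B$ and $B_h$ depend on $t$ only through $f'(\varphi)$, this yields
\begin{equation*}
B_h(\partial_t\mP_h\varphi,\phi_h)=B(\partial_t\varphi,\phi_h)+\mathcal{R}(\partial_t\varphi;\varphi,\phi_h)-\mathcal{R}_h(\partial_t\varphi;\mP_h\varphi,\phi_h),
\end{equation*}
where $\mathcal{R}$ and $\mathcal{R}_h$ collect the perturbations arising from $\partial_t f'(\varphi)=6\varphi\,\partial_t\varphi$. This shows that $\partial_t\mP_h\varphi$ is itself a Ritz-type projection of $\partial_t\varphi$ up to a perturbation that is controlled by the bound on $\varphi-\mP_h\varphi$ already proved and by the regularity $\partial_t\varphi\in L^2(0,T;H^{\ell+1}(\O))$ from Assumption~\ref{assump:reg:add}. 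Repeating the Strang and Aubin--Nitsche arguments on this perturbed identity then produces the stated $O(h^\ell)$ and $O(h^{\ell-1})$ bounds on $\partial_t(\varphi-\mP_h\varphi)$. The main technical obstacle is precisely the handling of the time-dependent nonlinear weight $f'(\varphi)$: it governs the coercivity threshold for $\alpha$, must be controlled uniformly in $t$ via the $W^{2,\infty}$-regularity of Assumption~\ref{Assump:cont:phase}, and its interaction with the projector $\PikMoneKp$ generates the most delicate variational crime, whose sharp bookkeeping is essential to recover optimal orders for both $\varphi$ and $\partial_t\varphi$.
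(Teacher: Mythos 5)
Your proposal follows essentially the same route as the paper's proof: coercivity of $B_h$ for $\alpha$ sufficiently large, an energy argument for the $H^2$-error (the paper manipulates the orthogonality of the $L^2$-projections directly where you insert a polynomial approximant $\varphi_\pi$, but the two bookkeepings are equivalent), the Aubin--Nitsche duality argument with Assumption~\ref{Assump:reg:aux} for the $H^1$-error, and differentiation of the defining identity \eqref{Cahn:projector} in time, which produces exactly the perturbation $\partial_t f'(\varphi)=6\varphi\,\partial_t\varphi$ that the paper isolates and bounds. The only point worth flagging is that in the consistency term for $\overline{r}_h$ the $L^\infty$-boundedness of $f'(\varphi)$ alone does not suffice to reach order $h^{\ell-1}$: for the contribution tested against $({\rm I}-\PikMoneKp)\nabla\delta_h$ one must also exploit the orthogonality of $\PikMoneKp$ together with the identity $f'(\varphi)\nabla\varphi=\nabla f(\varphi)$ and the assumed $H^{\ell-2}$-regularity of $f(\varphi)$ from Assumption~\ref{assump:reg:add}, as the paper does in the bound of its term $B_2$.
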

\begin{proof}
We split the proof into two steps. In the first step, we establish the first
estimate. In the second step, we provide a sketch of the proof for the estimates
of $\partial_t(\varphi - \mP_h \varphi)$.

\paragraph{Step 1. Estimate in $H^2$- and $H^1$-norms of $\varphi - \mP_h \varphi$.}	
Let $\varphi_I \in \Hh$ be the interpolant of $\varphi$ (cf. Proposition \ref{approx:virtual:velo:phase}) and let $\delta_h:= \mP_h \varphi - \varphi_I \in \Hh$.
Exploiting the coericivity property in \eqref{eq:Bcoer} and the definition of Cahn-Hilliard projection we infer 
\begin{equation}\label{eq:Ph-B0}
\begin{split}
\| \delta_h \|_{2,\O}^ 2 &\lesssim 
B_h(\delta_h, \delta_h) = 
B_h(\mP_h \varphi, \delta_h) -
B_h(\varphi_I, \delta_h) =
B(\varphi, \delta_h) -
B_h(\varphi_I, \delta_h) 
\\
&= 
\bigl(a_{\bD}(\varphi,\delta_h) - a_{\bD, h}(\varphi_I,\delta_h) \bigr) +
 \varepsilon^{-2} \bigl( r(\varphi; \varphi, \delta_h) - \overline{r}_h(\varphi; \varphi_I, \delta_h) \bigr)
\\
& =: B_1 + B_2\,.
\end{split}
\end{equation} 
Using the definition of $L^2$-projection, simple computations yield:
\[
\begin{aligned}
B_1 &= \sum_{E \in \Oh} \bigl(  (\bD^2 \varphi, \bD^2 \delta_h)_E  -\big(\boldsymbol{\Pi}^{0,\ell-2}_{\E}\bD^2 \varphi_I, 	\boldsymbol{\Pi}^{0,\ell-2}_{\E}\bD^2 \delta_h\big)_{\E} -
s_{\bD}^{\E}\big(({\rm I}-\PiK) \varphi_I,({\rm I}-\PiK) \delta_h\big) \bigr)
\\
&= \sum_{E \in \Oh} \bigl(  \big(({\rm I} - \boldsymbol{\Pi}^{0,\ell-2}_{\E})\bD^2 \varphi  +
\boldsymbol{\Pi}^{0,\ell-2}_{\E}\bD^2 (\varphi -\varphi_I), 	\bD^2 \delta_h\big)_{\E} -
s_{\bD}^{\E}\big(({\rm I}-\PiK) \varphi_I,({\rm I}-\PiK) \delta_h\big) \bigr) \,.
\end{aligned}
\]
Thus, employing the Cauchy-Schwarz inequality, the equivalences \eqref{term:stab:SK}, Propositions \ref{approx:prop:L2} and \ref{approx:virtual:velo:phase},  along with  the triangular inequality, we infer
\begin{equation}
\label{eq:Ph-B1}
\begin{aligned}
B_1  &\lesssim 
\sum_{E \in \Oh} \biggl( \big(\|({\rm I} - \boldsymbol{\Pi}^{0,\ell-2}_{\E})\bD^2 \varphi\|_{0,E} +  |\varphi - \varphi_I|_{2,E}\bigr) 
| \delta_h|_{2,E} + |({\rm I} - \PiK) \varphi_I |_{2,E} | ({\rm I} - \PiK) \delta_h |_{2,E} \biggr)
\\
& \lesssim 
\sum_{E \in \Oh} \bigl( \|({\rm I} - \boldsymbol{\Pi}^{0,\ell-2}_{\E})\bD^2 \varphi\|_{0,E} +  |\varphi - \varphi_I|_{2,E} +
|({\rm I} - \PiK) \varphi |_{2,E} \bigr) 
| \delta_h|_{2,E} 
\\
& \lesssim 
\sum_{E \in \Oh} h_E^{\ell-1} |\varphi|_{\ell+1,E} 
|\delta_h|_{2,E} 
 \lesssim
h^{\ell-1} |\varphi|_{\ell+1,\O}  \|\delta_h\|_{2,\O} \,. 
\end{aligned}
\end{equation}
We now focus on the term $B_2$. Employing the definition of $L^2$-projection and the identity $\nabla f(\varphi) = f'(\varphi)\nabla \varphi$, we obtain
\[
\begin{aligned}
& B_2  =
\varepsilon^{-2}\sum_{\E \in \O_h}  \biggl( 
\big(f'(\varphi) \nabla \varphi,   \nabla \delta_h\big)_{E}  - 
\big(f'(\varphi) \PikMoneKp  \nabla \varphi_I,  \PikMoneKp  \nabla \delta_h\big)_{E} 
\biggr)
\\
& 
= 
\varepsilon^{-2}\sum_{\E \in \O_h}  \biggl( 
\big(  f'(\varphi)\nabla \varphi,   ({\rm I} - \PikMoneKp)\nabla \delta_h\big)_{E}  +
\big(f'(\varphi) (\nabla \varphi - \PikMoneKp  \nabla \varphi_I),  \PikMoneKp  \nabla \delta_h\big)_{E} 
\biggr)
\\
& = 
\varepsilon^{-2}\sum_{\E \in \O_h}  \biggl( 
\big(  ({\rm I} - \PikMoneKp) \nabla f(\varphi), ({\rm I} - \PikMoneKp)\nabla \delta_h\big)_{E}  +
\big(f'(\varphi) (\nabla \varphi - \PikMoneKp  \nabla \varphi_I),  \PikMoneKp  \nabla \delta_h\big)_{E} 
\biggr) \,.
\end{aligned}
\]
Thus, employing the Cauchy-Schwarz inequality, Propositions \ref{approx:prop:L2} and  \ref{approx:virtual:velo:phase}, the term $B_2$ can be bounded as follows:
\begin{equation}
\label{eq:Ph-B2} 
\begin{aligned}
B_2 &\lesssim \varepsilon^{-2} \sum_{\E \in \O_h} 
\bigl(
h_E^{\ell-2}|f(\varphi)|_{\ell-2,E} \, h_E |\delta_h|_{2,E} + 
\|f'(\varphi)\|_{L^\infty(E)} \big( \|({\rm I} - \PikMoneKp)  \nabla \varphi)\|_{0,E}  + |\varphi - \varphi_I|_{1,E}\big) |\delta_h|_{1,E}
\bigr)
\\
& \lesssim \varepsilon^{-2} \sum_{\E \in \O_h} 
\bigl(
h_E^{\ell-1}|f(\varphi)|_{\ell-2,E}  + 
h_E^{\ell}  \|\varphi^2  + 1\|_{L^\infty(E)} |\varphi|_{\ell+1,E} \bigr) \|\delta_h\|_{2,E}
\\
& \lesssim \varepsilon^{-2} h^{\ell-1}
\bigl( |f(\varphi)|_{\ell-2,\O}  + 
h \, C(C_T)   |\varphi|_{\ell+1, \O} \bigr) \|\delta_h\|_{2, \O} \,.
\end{aligned}
\end{equation}
Therefore, collecting bounds \eqref{eq:Ph-B1} and \eqref{eq:Ph-B2} in \eqref{eq:Ph-B0}, together with Proposition \ref{approx:virtual:velo:phase} and the triangular inequality we arrive at
\begin{equation}
\label{eq:Ph-norma2}
\|\varphi - \mP_h \varphi\|_{2, \O} \leq 
C(\varphi, f(\varphi), \varepsilon^{-2}, C_T) h^{\ell-1} \,.
\end{equation}

We now proceed to derive the $H^{1}$-error estimate.  
To this end, we apply Assumption~\ref{Assump:reg:aux} with $\mG = (\varphi - \mP_h \varphi) \in H^2(\O) \subset H^1(\O)$, which gives
\begin{equation}\label{add:reg}
	\|\chi\|_{3,\Omega} \le C_{\Omega} \, \|\varphi - \mP_h \varphi\|_{1,\O}\,.
\end{equation}
Let $\chi_I \in \Hh$ be the $\Hh$-interpolant of $\chi$ (cf. Proposition \ref{approx:virtual:velo:phase}), then taking $w = (\varphi - \mP_h \varphi) \in  H$ in \eqref{aux:problem} we have
\begin{equation}\label{split:H1:norm}
	\begin{split}
		&\|\varphi - \mP_h \varphi\|_{1,\O}^2 
		= B(\chi, \varphi - \mP_h \varphi) \\
		&= B(\chi - \chi_I, \varphi - \mP_h \varphi) + B(\chi_I, \varphi - \mP_h \varphi) \\
		&= B(\chi - \chi_I, \varphi - \mP_h \varphi) + B(\chi_I, \varphi) - B(\chi_I, \mP_h \varphi) \\
		&= B(\chi - \chi_I, \varphi - \mP_h \varphi) + \bigl(B_h(\chi_I, \mP_h \varphi) - B(\chi_I, \mP_h \varphi)\bigr) \\
		&= B(\chi - \chi_I, \varphi - \mP_h \varphi) + \bigl(a_{\bD,h}(\chi_I, \mP_h \varphi) - a_{\bD}(\chi_I, \mP_h \varphi)\bigr) +
		\varepsilon^{-2} \big(\overline{r}_h(\varphi; \chi_I, \mP_h \varphi) - r(\varphi; \chi_I, \mP_h \varphi)\big)
		 \\
		&=: B_3 + B_4 + B_5 \,,
	\end{split}
\end{equation}
where we used that $B_h(\chi_I, \mP_h \varphi) = B(\chi_I, \varphi)$, which follows directly from the definition of the elliptic projection $\mP_h$.
For $B_3$, employing definition \eqref{def:B}, Proposition~\ref{approx:virtual:velo:phase}, bounds \eqref{add:reg} 
and \eqref{eq:Ph-norma2},
 we immediately obtain
\begin{equation}\label{eq:Ph-B3}
\begin{aligned}
	B_3 & \lesssim 
	\bigl( |\chi - \chi_I|_{2,\O}  + 
	\varepsilon^{-2} \|f'(\varphi)\|_{L^\infty(\O)} \,
	|\chi - \chi_I|_{1,\O}  +
	\alpha \|\chi - \chi_I\|_{0,\O} \bigr) \|\varphi - \mP_h\varphi\|_{2,\O} 
	\\
	&  \lesssim
	h  \bigl( 1 + \varepsilon^{-2} C(C_T) h + \alpha h^2\bigr)
	|\chi|_{3, \O} \, \|\varphi - \mP_h \varphi\|_{2,\O}
	\\
	&  \lesssim
	C(\varphi, f(\varphi), \varepsilon^{-2}, C_T, C_{\O}) h^\ell \,
	\|\varphi - \mP_h \varphi\|_{1, \O} \,.
\end{aligned}
\end{equation}
We next estimate $B_4$. We preliminary observe that 
\[
\begin{aligned}
B_4 &= \sum_{E \in \Oh} \bigl( \big((\boldsymbol{\Pi}^{0,\ell-2}_{\E}- {\rm I})\bD^2 \chi_I, 	\bD^2 \mP_h \varphi\big)_{\E} +
s_{\bD}^{\E}\big(({\rm I}-\PiK) \chi_I,({\rm I}-\PiK) \mP_h \varphi\big) 
 \bigr)
\\
&= \sum_{E \in \Oh} \bigl( \big(({\rm I} - \boldsymbol{\Pi}^{0,\ell-2}_{\E})\bD^2 \chi_I, \boldsymbol{\Pi}^{0,\ell-2}_{\E}\bD^2 \varphi -	\bD^2 \mP_h \varphi\big)_{\E} +
s_{\bD}^{\E}\big(({\rm I}-\PiK) \chi_I,({\rm I}-\PiK) \mP_h \varphi\big) 
 \bigr) \,.
\end{aligned}
\]
Thus, employing Propositions \ref{approx:prop:L2} and \ref{approx:virtual:velo:phase}, the triangular inequality, and bounds \eqref{eq:Ph-norma2} and \eqref{add:reg}, we obtain
\begin{equation}
\label{eq:Ph_B4}
\begin{aligned}
B_4& \lesssim 
\sum_{E \in \Oh} h_E |\chi|_{3,E} 
\big( h_E^{\ell-1}|\varphi|_{\ell+1,E} + |\varphi - \mP_h \varphi|_{2,E} \big)
\\ 
&  \lesssim
h \big( h^{\ell-1}|\varphi|_{\ell+1,E} + \|\varphi - \mP_h \varphi\|_{2,\O} \big) \|\chi \|_{3, \O}
\\
& \lesssim
C(\varphi, f(\varphi), \varepsilon^{-2}, C_T, C_\O) h^\ell \, \|\varphi - \mP_h \varphi\|_{1, \O} \,.
\end{aligned}
\end{equation}
Finally for $B_5$, employing the definition of $L^2$-projection and the identity $\nabla f(\varphi) = f'(\varphi)\nabla \varphi$, we obtain
\[
\begin{aligned}
& B_5  =
\varepsilon^{-2}\sum_{\E \in \O_h}  \biggl( 
\big(f'(\varphi) \PikMoneKp  \nabla \chi_I,  \PikMoneKp  \nabla \mP_h \varphi\big)_{E} 
-\big(f'(\varphi) \nabla \chi_I,   \nabla \mP_h \varphi\big)_{E}  
\biggr)
\\
&= \varepsilon^{-2}\sum_{\E \in \O_h}  \biggl( 
\big(f'(\varphi) \PikMoneKp  \nabla \chi_I,  (\PikMoneKp  - {\rm I}) \nabla \mP_h \varphi\big)_{E} 
+ \big(f'(\varphi) (\PikMoneKp - {\rm I})\nabla \chi_I,   \nabla \mP_h \varphi\big)_{E}  
\biggr)
\\
&= \varepsilon^{-2}\sum_{\E \in \O_h}  \biggl( 
\big(({\rm I}- \Pi_E^{0,0})f'(\varphi)   \PikMoneKp  \nabla \chi_I,  (\PikMoneKp  - {\rm I}) \nabla \mP_h \varphi\big)_{E} 
+ 
\\
& \quad  + \big(f'(\varphi) (\PikMoneKp - {\rm I} )\nabla \chi_I,   \nabla \mP_h \varphi - \nabla \varphi\big)_{E}  +
\big( (\PikMoneKp - {\rm I}) \nabla \chi_I,  ({\rm I} - \PikMoneKp)\nabla f(\varphi) \big)
\biggr) \,.
\end{aligned}
\]
Thus, arguing as in \eqref{eq:Ph-B2}, we obtain 
\begin{equation}
\label{eq:Ph-B5}
\begin{aligned}
B_5 &\lesssim
\varepsilon^{-2}
\sum_{E \in \Oh} \biggl( 
h_E \, |f'(\varphi)|_{W^{1,\infty}(E)} \, 
(|\chi|_{1,E} + h_E^2\, |\chi|_{3,E}) \, (h_E^{\ell} |\varphi|_{\ell+1,E} + |\varphi - \mP_h \varphi|_{1,E}) +
\\
& \quad +  
h_E^2\, |\chi|_{3,E} \, (\|f'(\varphi)\|_{L^{\infty}(E)}  \,|\varphi - \mP_h \varphi|_{1,E} + h_E^{\ell-2}|f(\varphi)|_{\ell-2,E} ) \biggr) 
\\
&\lesssim
\varepsilon^{-2}
\sum_{E \in \Oh} h_E \, \|\chi\|_{3,E}\biggl( 
\|\varphi^2+1\|_{W^{1,\infty}(E)} \, 
(h_E^{\ell} |\varphi|_{\ell+1,E} + \|\varphi - \mP_h \varphi\|_{2,E}) +
h_E^{\ell-1}|f(\varphi)|_{\ell-2,E} ) \biggr) 
\\
& \lesssim
C(\varphi,  f(\varphi), \varepsilon^{-2}, C_T, C_\O) h^{\ell} \|\varphi - \mP_h \varphi\|_{1,\O} \,.
\end{aligned}
\end{equation}
Thus, combining bounds in \eqref{eq:Ph-B3}, \eqref{eq:Ph_B4}, \eqref{eq:Ph-B5} in equation \eqref{split:H1:norm} we obtain
\begin{equation}
\label{eq:Ph-norma1}
\|\varphi - \mP_h \varphi\|_{1,\O}  \leq C(\varphi, f(\varphi), \varepsilon^{-2}, C_T, C_\O) h^{\ell}  \,.
\end{equation}
The thesis now follows from \eqref{eq:Ph-norma2} and \eqref{eq:Ph-norma1}.

\paragraph{Step 2. Estimate in $H^2$- and $H^1$-norms of $\partial_t(\varphi - \mP_h \varphi)$.}	
We provide a sketch of the proof for the estimates of 
$\partial_t(\varphi - \mP_h \varphi)$. 
We start by proving the $H^2$-estimate. Let $\eta_h = \partial_t(\mP_h \varphi) - \mP_h (\partial_t \varphi)  \in \Hh$. 
Thus, employing \eqref{eq:Bcoer} and the definition of $\mP_h$, we deduce
\begin{equation}
\label{eq:eta}
\begin{split}	
\|\eta_h\|^2_{2,\O} &\lesssim	
B_h(\eta_h, \eta_h) =
B_h(\partial_t(\mP_h \varphi), \eta_h) - B_h(\mP_h(\partial_t\varphi), \eta_h) = 
B_h(\partial_t(\mP_h \varphi), \eta_h) - B(\partial_t\varphi, \eta_h)
\\
& = \varepsilon^{-2}\sum_{E \in \Omega_h}\biggl( \bigl(\partial_t (f^{\prime}(\varphi)) \, \nabla \varphi, \nabla \eta_h\bigr)_{\E}
- \bigl(\partial_t (f^{\prime}(\varphi)) \, \PikMoneKp \nabla\mP_h \varphi, \PikMoneKp \nabla \eta_h\bigr)_{\E} \biggr) 
=:B_6 \,.
\end{split}
\end{equation}
We now bound the term $B_6$. 
We first observe that 
\[
\begin{aligned}
 B_6  & = 
 \varepsilon^{-2}\sum_{\E \in \O_h}  \biggl( 
\big(  ({\rm I} - \PikMoneKp)( \partial_t (f^{\prime}(\varphi)) \, \nabla \varphi),   ({\rm I} - \PikMoneKp)\nabla \eta_h\big)_{E}  +
\\
& \quad + 
\big(\partial_t (f^{\prime}(\varphi))  (\nabla \varphi - \PikMoneKp  \nabla \mP_h \varphi),  \PikMoneKp  \nabla \eta_h\big)_{E} 
\biggr)\,.
\end{aligned}
\]
Therefore, using the identity $\partial_t (f'(\varphi)) = 6 \varphi \, \partial_t \varphi$, the H\"{o}lder inequality, the Sobolev inclusions and bound \eqref{eq:Ph-norma1},  we obtain
\begin{equation}
\label{eq:Ph-B6}
\begin{aligned}
B_6 &\lesssim 
\varepsilon^{-2} \sum_{\E \in \O_h} 
\bigl(
h_E^{\ell-2} \, |\partial_t (f'(\varphi)) \, \nabla \varphi|_{\ell-2,E} \, h_E \, |\eta_h|_{2,E} + 
\\
& \quad + 
\|\varphi\|_{L^\infty(E)} \|\partial_t \varphi\|_{L^4(E)} \big( \|({\rm I} - \PikMoneKp)  \nabla \varphi)\|_{0,E}  + |\varphi - \mP_h \varphi|_{1,E}\big) |\eta_h|_{W^{1,4}(E)}
\bigr)
\\
& \lesssim \varepsilon^{-2} \sum_{\E \in \O_h} 
\bigl(
h_E^{\ell-1} \, |\varphi|_{W^{\ell-2, 6}(E)}  \,  
|\partial_t \varphi|_{W^{\ell-2, 6}(E)} \,
|\varphi|_{W^{\ell-1, 6}(E)} \, \|\eta_h\|_{2,E} +
\\
& \quad + 
C(C_T) \|\partial_t \varphi\|_{L^4(E)} \big( h^\ell \, |\varphi|_{\ell+1,E}  + |\varphi - \mP_h \varphi|_{1,E}\big) |\eta_h|_{W^{1,4}(E)}
\\
& \lesssim \varepsilon^{-2} 
\bigl(
h^{\ell-1}\|\varphi\|_{H^{\ell-1}(\O)}  \,  
\|\partial_t \varphi\|_{H^{\ell-1}(\O)} \,
\|\varphi\|_{H^{\ell}(\O)}  +
C(C_T) \|\partial_t \varphi\|_{H^1(\O)}   \, |\varphi - \mP_h \varphi|_{1,\O}
\bigr)
\, \|\eta_h\|_{2,\O} 
\\
& \lesssim
C(\varphi, f(\varphi), \varepsilon^{-2}, C_T) h^{\ell-1} \, \|\eta_h\|_{2, \O} \,.
\end{aligned}
\end{equation}
Thus, combining bounds \eqref{eq:eta} and \eqref{eq:Ph-B6} with \eqref{eq:Ph-B2} (where we replace $\varphi$ with $\partial_t \varphi$) along with the triangular inequality, we obtain
\[
\|\partial_t (\varphi - \mP_h \varphi)\|_{2, \O} 
\lesssim 
\|\partial_t \varphi - \mP_h(\partial_t \varphi)\|_{2, \O}  +
\|\eta_h\|_{2, \O} 
\leq 
C(\varphi, f(\varphi), \varepsilon^{-2}, C_T, C_\Omega) h^{\ell-1} \,.
\]
To establish the remaining bound, 
we consider the auxiliary problem \eqref{aux:problem} with $\mathcal{G}= \partial_t (\varphi - \mP_h \varphi)$ and 
we proceed analogously to the proof of the $H^1$-estimate for $\varphi - \mP_h \varphi$.

\end{proof}

\begin{remark}\label{remark:projection}
We recall that as in \cite{ABSV-SINUM2016} and~\cite{EF89} (see also \cite{DH2024}) the elliptic operator $\mP_h \phi  \in W^{1, \infty}(\O)$, with norm bounded uniformly in time, for all $\phi \in H^2(\O)$,
\begin{equation*}
\| \mP_h \phi \|_{W^{1, \infty}(\O)} \leq C(\phi).
\end{equation*}
We will use this property in the proof of some technical lemmas described in the next subsection.	
\end{remark}

\subsection{Error bounds for the semi-discrete problem}
In this subsection we will provide error bounds for the semi-discrete scheme. We start by establishing error identities for the the momentum and  energy equations.

\paragraph{Error identity for the momentum equation.} By using the Stokes operator defined in \eqref{Stokes:projector}, we split the error between the continuous and discrete velocity solutions as:
\begin{equation}\label{eq:split:velo}
\bu -\bu_h = (\bu-\mS_h\bu)- (\bu_h-\mS_h\bu) =: \be^{\bu}_I - \be_h^{\bu}. 
\end{equation}
Thus, from the momentum equation  in \eqref{form:kernel},  for all $\bv_h \in \Zh$, we have the following error identity:
\begin{equation}\label{NS:error:eq}
	\begin{split}
&m_{F,h}(\partial_t \be_h^{\bu}, \bv_h) + \nu a_{\bnabla,h}(\be_h^{\bu}, \bv_h)  =\big((\bg_h,\bv_h)_{\O}- (\bg, \bv_h)_{\O}\big) +\big(  c_{F}(\bu; \bu, \bv_h)- c_{F,h}(\bu_h; \bu_h, \bv_h)\big)\\
& \quad +   \big( m_{F}(\partial_t \bu, \bv_h)- m_{F,h}(\partial_t \mS_h \bu, \bv_h)\big)+  \lambda  \big( d(\varphi ;\varphi, \bv_h)- d_{h}(\varphi_h ;\varphi_h, \bv_h)\big).
	\end{split}
\end{equation}
\paragraph{Error identity for the phase equation.} For the phase equation we use the Cahn--Hilliard operator defined in \eqref{Cahn:projector}, and we split the phase error as:
\begin{equation}\label{eq:split:phase}
\varphi -\varphi_h = (\varphi-\mP_h\varphi ) - ( \varphi_h-\mP_h\varphi) =: e^{\varphi}_I - e_h^{\varphi}. 
\end{equation}
Then, from the definition of $\mP_h \varphi$ and the second equation in \eqref{form:kernel}, for all $\phi_h \in \Hh$, we get
\begin{equation}\label{Cahn:error:eq}
	\begin{split}
m_{h}(\partial_t e^{\varphi}_h, \phi_h) &+ \gamma a_{\bD,h}(e^{\varphi}_h, \phi_h) 
= \big(m(\partial_t \varphi, \phi_h)-m_{h}(\partial_t \mP_h\varphi, \phi_h) \big)  -\gamma \alpha m( e^{\varphi}_I, \phi_h)_{\O} \\
& +   \gamma \varepsilon^{-2}\big(\bar{r}_h(\varphi; \mP_h\varphi, \phi_h)  -r_h(\varphi_h; \varphi_h, \phi_h)\big) +
\big(c(\bu;\varphi,\phi_h)-c_h(\bu_h;\varphi_h,\phi_h) \big),		
	\end{split}
\end{equation}
where $\bar{r}_h(\cdot;\cdot, \cdot)$ is defined in \eqref{form:rbar}. 

The next step is to establish error estimates for the momentum and phase equations (cf. \eqref{NS:error:eq} and \eqref{Cahn:error:eq}). The following two lemmas provide such bounds and will be useful to obtain the convergence result for the semi-discrete problem.

\begin{lemma}[Error estimate for the momentum equation]\label{lemma:momentum:error}
Let Assumptions \ref{Assump:cont:phase}, \ref{Assump:mesh},  \ref{Assump:disc:phase}, \ref{assump:reg:add} and \ref{Assump:reg:aux} hold. 
Then for a.e. $t \in (0,T]$,  the following error estimate holds	
\begin{equation*}
\begin{split}
\frac{1}{2} \partial_t ||| \be_h^{\bu}|||^2_{0,\Omega} + \frac{\nu}{2}  ||| \be_h^{\bu}|||^2_{1,\O} 
&\le
C_{{\rm M},1} \, h^{2m} +
\frac{\gamma}{4} |||e_h^{\varphi}||||^2_{2,\O} +  
C_{{\rm M},2} \,|||\be_h^{\bu}|||^2_{0,\Omega}  \,,
\end{split}
\end{equation*}
where $C_{{\rm M},1} = C(\bu, \varphi, f(\varphi), \bg, \nu^{-1}, \lambda, \varepsilon^{-2}, C_T, C_{\O})$ and 
$C_{{\rm M},2} = C(\bu, \nu^{-3}, \gamma^{-1}, \lambda, C_T, \widetilde{C}_T)$ are positive constants independent of $h$.
\end{lemma}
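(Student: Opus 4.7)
The plan is to test the error identity \eqref{NS:error:eq} with $\bv_h = \be_h^{\bu} \in \Zh$. The left-hand side immediately yields $\tfrac{1}{2}\partial_t |||\be_h^{\bu}|||^2_{0,\Omega} + \nu\,|||\be_h^{\bu}|||^2_{1,\Omega}$, thanks to the symmetry of $m_{F,h}$ and the coercivity of $a_{\bnabla,h}$ from item $i)$ of Proposition \ref{lemma:bound:ch}. Each of the four residual contributions on the right will then be bounded by a term of the form $C\,h^{2m} + \delta\,|||\be_h^{\bu}|||^2_{1,\Omega} + \tfrac{\gamma}{4}|||e_h^{\varphi}|||^2_{2,\Omega} + C(\bu)\,|||\be_h^{\bu}|||^2_{0,\Omega}$, with $\delta$ chosen small enough that Young's inequality absorbs the gradient seminorms into $\tfrac{\nu}{2}|||\be_h^{\bu}|||^2_{1,\Omega}$ on the left-hand side.

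For the individual terms I would proceed as follows. The load discrepancy $(\bg_h - \bg, \be_h^{\bu})_\Omega$ is standard: by the $L^2$-projection definition of $\bg_h$ together with Proposition \ref{approx:prop:L2} one obtains an $O(h^{k+1})$ bound. The mass-form variational crime $m_F(\partial_t\bu, \bv_h) - m_{F,h}(\partial_t \mS_h\bu, \bv_h)$ is rewritten by adding and subtracting $m_F(\partial_t\mS_h\bu, \bv_h)$: the first piece uses the time-differentiated Stokes projection estimate (Lemma \ref{lemma:error:S_h}), while the second is an $L^2$-projection-type consistency error bounded via Proposition \ref{approx:prop:L2} and the stability of the stabilizer $s_{0,F}^{E}$. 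The stress contribution is split as
\[
\lambda\bigl(d - d_h\bigr)(\varphi;\varphi,\be_h^{\bu}) + \lambda\bigl(d_h(\varphi;\varphi,\be_h^{\bu}) - d_h(\varphi_h;\varphi_h,\be_h^{\bu})\bigr).
\]
The first piece is handled by Lemma \ref{lemma:crimen:d}. For the second I would invoke Lemma \ref{lemma:crimen:d-B}, which under Assumption \ref{Assump:cont:phase} and Remark \ref{remark:projection} yields a bound of the form $C\,\|\varphi-\varphi_h\|_{2,\Omega}\,\|\be_h^{\bu}\|_{0,\Omega}$; using the decomposition $\varphi - \varphi_h = e_I^{\varphi} - e_h^{\varphi}$ together with Lemma \ref{lemma:error:P_h} for $e_I^{\varphi}$, Young's inequality then produces the desired $\tfrac{\gamma}{4}|||e_h^{\varphi}|||^2_{2,\Omega}$ contribution and an $h^{2m}$ remainder.

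The main obstacle is the convective term. Using trilinearity and $\bu - \bu_h = \be_I^{\bu} - \be_h^{\bu}$, I would write
\[
c_F(\bu;\bu,\bv_h) - c_{F,h}(\bu_h;\bu_h,\bv_h) = (c_F - c_{F,h})(\bu;\bu,\bv_h) + c_{F,h}(\be_I^{\bu} - \be_h^{\bu}; \bu, \bv_h) + c_{F,h}(\bu_h; \be_I^{\bu} - \be_h^{\bu}, \bv_h).
\]
Lemma \ref{lemma:crimen:cF} takes care of the polynomial-consistency term, and skew-symmetry of $c_{F,h}$ annihilates $c_{F,h}(\bu_h;\be_h^{\bu},\be_h^{\bu})$. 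The critical contribution is $c_{F,h}(\be_h^{\bu};\bu,\be_h^{\bu})$: the naive continuity bound \eqref{boundcF:one} would produce $\|\bu\|_\bV\,\|\be_h^{\bu}\|_\bV^2$, which cannot be absorbed uniformly into $\tfrac{\nu}{2}|||\be_h^{\bu}|||^2_{1,\Omega}$. I would instead appeal to the sharper estimate \eqref{boundcF:two}, giving $\|\be_h^{\bu}\|_{0,\Omega}^{1/2}\|\be_h^{\bu}\|_\bV^{3/2}\|\bu\|_\bV$, and apply Young's inequality with exponents $(4/3,4)$. This absorbs a $\tfrac{\nu}{8}\|\be_h^{\bu}\|_\bV^2$ piece on the left and leaves a term proportional to $\nu^{-3}\|\bu\|_\bV^4\,|||\be_h^{\bu}|||^2_{0,\Omega}$, which matches exactly the $\nu^{-3}$ dependence announced in $C_{{\rm M},2}$. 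The remaining mixed terms, after rewriting $\bu_h = \mS_h\bu + \be_h^{\bu}$, are controlled by the same combination of \eqref{boundcF:one}--\eqref{boundcF:two} and the Stokes projection estimate of Lemma \ref{lemma:error:S_h}.
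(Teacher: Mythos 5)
Your proposal is correct and follows essentially the same route as the paper: testing with $\be_h^{\bu}$, the same four-way splitting of the residual, Lemmas \ref{lemma:crimen:cF}, \ref{lemma:crimen:d}, \ref{lemma:crimen:d-B} and \ref{lemma:error:P_h} for the consistency and stress terms, and in particular the same key use of skew-symmetry plus the sharper bound \eqref{boundcF:two} with Young's inequality (exponents $4/3$ and $4$) to produce the $\nu^{-3}\|\bu\|_{\bV}^4\,|||\be_h^{\bu}|||^2_{0,\Omega}$ contribution. The only cosmetic difference is that you expand $\bu_h = \mS_h\bu + \be_h^{\bu}$ in the mixed convective terms whereas the paper expands $\bu_h = \bu - \be_I^{\bu} + \be_h^{\bu}$; the resulting terms and estimates coincide.
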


\begin{proof} Taking $\bv_h := \be_h^{\bu} \in \Zh$ in \eqref{NS:error:eq}, using the coercivity properties in Proposition \ref{lemma:bound:ch} and the equivalences \eqref{equiv:norms},   we have 
\begin{equation}\label{error:estima:NS-1}
\begin{split}
\frac{1}{2} \partial_t ||| \be_h^{\bu}|||^2_{0,\Omega}& + \nu ||| \be_h^{\bu}|||^2_{1,\O} \lesssim  
\big((\bg_h,\be_h^{\bu})_{\O}- (\bg, \be_h^{\bu})_{\O}\big) + \big(  c_{F}(\bu; \bu, \be_h^{\bu})- c_{F,h}(\bu_h; \bu_h, \be_h^{\bu})\big)\\
& \quad +   \big( m_{F}(\partial_t \bu, \be_h^{\bu})- m_{F,h}(\partial_t \mS_h \bu, \be_h^{\bu})\big)
+  \lambda  \big( d(\varphi ;\varphi, \be_h^{\bu})- d_{h}(\varphi_h ;\varphi_h, \be_h^{\bu})\big)\\
&=: T_{\bg} + T_{c} + T_{m} + \lambda T_{d}.
\end{split}
\end{equation}
In what follows, we will provide bounds for each terms above. In fact, the terms $T_{\bg}$ and $T_m$ are easy to bound by using classical arguments~\cite{BLV-SINUM2018,Vacca-parabolico}:
\begin{equation}\label{Tf:Tm}
T_{\bg} +T_m \leq C h^{2k} (\|\partial_t \bu\|^2_{k,\O}+ \|\bg\|^2_{k,\O}) +C|||\be_h^{\bu}|||^2_{0,\Omega}.
\end{equation}
Thus, we will focus our attention on the terms $\lambda T_{d}$ and $T_{c}$. From the definition we have
\begin{equation*}
\begin{split}
\lambda T_d =  \lambda \big( d(\varphi ;\varphi, \be_h^{\bu})- d_{h}(\varphi ;\varphi, \be_h^{\bu}) \big) + \lambda \big( d_{h}(\varphi ;\varphi, \be_h^{\bu})- d_{h}(\varphi_h ;\varphi_h, \be_h^{\bu}) \big) =: \lambda T_{d1}+ \lambda T_{d2}.
\end{split}	
\end{equation*} 
We use Lemma~\ref{lemma:crimen:d}, equivalences \eqref{equiv:norms} 
and the Young inequality to obtain
\begin{equation*}
	\begin{split}
\lambda T_{d1} &\leq C \lambda h^{m}(\|\varphi\|_{\ell,\O} + \|\varphi\|_{H} ) \|\varphi\|_{\ell+1,\O}  \| \be_h^{\bu}\|_{\bV} \\
& \leq C \lambda^2 \nu^{-1} h^{2m}(\|\varphi\|_{\ell,\O} + \|\varphi\|_{H} )^2 \|\varphi\|^2_{\ell+1,\O} + \frac{\nu}{4}  ||| \be_h^{\bu}|||^2_{1,\O}.	
	\end{split}
\end{equation*}
Recalling Assumption \ref{Assump:cont:phase} and \ref{Assump:disc:phase}, from Lemma~\ref{lemma:crimen:d-B} and Lemma \ref{lemma:error:P_h}, we derive
\begin{equation*}
	\begin{split}
\lambda T_{d2} & \leq C \lambda \big(\|\varphi\|_{W^{2,\infty}(\O)} +\|\varphi_h\|_{W^{1,\infty}(\O)} \big)  \|\varphi-\varphi_h\|_{2,\O} \|\be_h^{\bu}\|_{0,\O}\\  
  &\leq \lambda (C_T + \widetilde{C}_T) \big( \|e_h^{\varphi}\|_{2,\O}  +\|e_I^{\varphi}\|_{2,\O}  \big) \|\be_h^{\bu}\|_{0,\O}\\  
  &\leq \lambda (C_T + \widetilde{C}_T) \big( |||e_h^{\varphi}|||_{2}  + C(\varphi, f(\varphi), \varepsilon^{-2}, C_T, C_{\O}) h^{\ell -1}  \big) \|\be_h^{\bu}\|_{0,\O}\\  
   & \leq    C(\varphi, f(\varphi), \varepsilon^{-2}, C_T, C_{\O})h^{2(\ell -1)}   +\frac{\gamma}{4} |||e_h^{\varphi}||||^2_{2,\O} + C\lambda^2 (C_T + \widetilde{C}_T) ^2( 1 + \gamma^{-1})  |||\be_h^{\bu}|||^2_{0,\O} \,.
   	\end{split}
\end{equation*}
Thus, by combining  the there terms above we have
\begin{equation}\label{Td}
	\begin{split}
		\lambda T_d  \leq C(\varphi, f(\varphi), \nu^{-1}, \lambda, \varepsilon^{-2}, C_T, C_{\O}) h^{2m} +  C(\gamma^{-1}, \lambda, C_T, \widetilde{C}_T) 
		|||\be_h^{\bu}|||^2_{0,\O} + \frac{\gamma}{4} |||e_h^{\varphi}|||^2_{2,\O} + \frac{\nu}{4} ||| \be_h^{\bu}|||^2_{1,\O}\,.
	\end{split}	
\end{equation} 
Now, we will analyze term $T_c$. We have that 
\begin{equation}\label{Tc}
\begin{split}
T_{c} 
&=   \big(c_{F}(\bu ;\bu, \be_h^{\bu}) - c_{F,h}(\bu ;\bu, \be_h^{\bu}) \big)  
+  \big(c_{F,h}(\bu ;\bu, \be_h^{\bu}) - c_{F,h}(\bu_h ;\bu_h, \be_h^{\bu}) \big) =: T_{c1} + T_{c2} \,.	
\end{split}
\end{equation} 
For the term $ T_{c1}$, we apply Lemma~\ref{lemma:crimen:cF} and the Young inequality to obtain
\begin{equation}\label{Tc1}
\begin{aligned}
T_{c1} &\leq Ch^{k}\big(\|\bu\|_{k,\O}+\|\bu\|_{\bV} +\|\bu\|_{k+1,\O} \big) \|\bu\|_{k+1,\O}\|\be_h^{\bu}\|_{\bV}
\\
&\leq C\nu^{-1}h^{2k}\big(\|\bu\|_{k,\O}+\|\bu\|_{\bV} +\|\bu\|_{k+1,\O} \big)^2 \|\bu\|_{k+1,\O}^2 + \frac{\nu}{8}|||\be_h^{\bu}|||_{1,\O}^2.
\end{aligned}
\end{equation}
The term $T_{c2}$ need additional analysis. To this end, we add and subtract suitable terms, and employ the identity~\eqref{eq:split:velo} and the skew-symmetry  of $c_{F,h}(\cdot; \cdot,\cdot)$ to obtain 
\begin{equation*}\label{Tc2a}
\begin{split}
 T_{c2} 
 &=  	c_{F,h}(\bu ;\bu-\bu_h, \be_h^{\bu}) + c_{F,h}(\bu-\bu_h ;\bu_h, \be_h^{\bu})\\
   &=  	c_{F,h}(\bu ;\be_I^{\bu}, \be_h^{\bu}) +	c_{F,h}(\be_I^{\bu} ;\bu_h, \be_h^{\bu}) -c_{F,h}(\be_h^{\bu},\bu_h, \be_h^{\bu})\\
     & =	c_{F,h}(\bu ;\be_I^{\bu}, \be_h^{\bu}) +  
     \bigl( c_{F,h}(\be_I^{\bu};\bu, \be_h^{\bu}) -	c_{F,h}(\be_I^{\bu} ;\be_I^{\bu}, \be_h^{\bu}) \bigr) 
     + \bigl( c_{F,h}(\be_h^{\bu}; \be_I^{\bu}, \be_h^{\bu})
     -c_{F,h}(\be_h^{\bu}; \bu, \be_h^{\bu}) \bigr)
 \,.
\end{split}
\end{equation*}  
We now use the  boundedness properties of $c_{F,h}(\cdot; \cdot,\cdot)$ (cf. Proposition \ref{lemma:bound:ch}, item $iii)$), Lemma \ref{lemma:error:S_h}, the Young inequality twice,  and derive the following bound
\begin{equation}
\label{eq:Tc2}
\begin{split} 
 T_{c2} &  \leq  
C  \|\be_I^{\bu}\|_{\bV} (\|\bu\|_{\bV} + \|\be_I^{\bu}\|_{\bV})  \|\be_h^{\bu}\|_{\bV} + 
C \|\be_h^{\bu}\|_{0, \O}^{1/2} \|\be_h^{\bu}\|_{\bV}^{1/2} (\|\bu\|_{\bV} + \|\be_I^{\bu}\|_{\bV})  \|\be_h^{\bu}\|_{\bV}
\\
& \leq \frac{\nu}{16} |||\be_h^{\bu}|||^2_{1,\O}  + C \nu^{-1} (\|\bu\|^2_{\bV} + \|\be_I^{\bu}\|^2_{\bV}) \|\be_I^{\bu}\|^2_{\bV}+
\\
 &  \quad \: + C \nu^{-1} \|\be_h^{\bu} \|_{0,\O}  \|\be_h^{\bu} \|_{\bV} \big(\|\bu\|^2_{\bV} + \|\be_I^{\bu} \|^2_{\bV} \big)  +  \frac{\nu}{32} |||\be_h^{\bu}|||^2_{1,\O}\\
     & \leq \frac{\nu}{16} |||\be_h^{\bu} |||_{1,\O}^2 + C \nu^{-1} \big(\|\bu\|^2_{\bV} + \|\be_I^{\bu} \|^2_{\bV} \big) \|\be_I^{\bu} \|^2_{\bV} + \\
 &  \quad  + \: C \nu^{-1}(\nu^{-1} \big(\|\bu\|^2_{\bV} + \|\be_I^{\bu} \|^2_{\bV} \big) \|\be_h^{\bu} \|_{0,\O})^2 +  \frac{\nu}{32} |||\be_h^{\bu}|||^2_{1,\O} +  \frac{\nu}{32} |||\be_h^{\bu}|||^2_{1,\O}\\
    & \leq \frac{\nu}{8} |||\be_h^{\bu} |||_{1,\O}^2 + C \nu^{-1} \big(\|\bu\|^2_{\bV} + \|\be_I^{\bu} \|^2_{\bV} \big) \|\be_I^{\bu} \|^2_{\bV} +  C \nu^{-3} \big(\|\bu\|^4_{\bV} + \|\be_I^{\bu}\|^4_{\bV} \big) \|\be_h^{\bu} \|^2_{0,\O}\\
   &  \leq \frac{\nu}{8} |||\be_h^{\bu} |||_{1,\O}^2 + C(\bu, \nu^{-1}) h^{2k} +C (\bu, \nu^{-3})|||\be_h^{\bu} |||^2_{0,\O}\,. 
\end{split}
\end{equation} 
Thus, from \eqref{Tc} and bounds \eqref{Tc1} and \eqref{eq:Tc2} we infer
\begin{equation}
\label{Tcfinal}
T_c \leq C(\bu, \nu^{-1}) h^{2k} +
C (\bu, \nu^{-3})|||\be_h^{\bu} |||^2_{0,\O} + 
\frac{\nu}{4} |||\be_h^{\bu} |||_{1,\O}^2 \,.
\end{equation}
The desired result follows by combining the estimates \eqref{Tf:Tm}, \eqref{Tcfinal} and \eqref{Td},  and inserting the resulting inequalities in \eqref{error:estima:NS-1}.
 
\end{proof}

We have the following key result dealing with the  discrete semilinear forms $r_h(\cdot;\cdot,\cdot)$ and $\bar{r}_h(\cdot;\cdot,\cdot)$. 
\begin{lemma}\label{lemma:rh}
Let Assumptions \ref{Assump:cont:phase}, \ref{Assump:mesh},  \ref{Assump:disc:phase}, \ref{assump:reg:add} and \ref{Assump:reg:aux} hold. 
Then for a.e. $t \in (0,T]$,  the following error estimate holds 
	\begin{equation*}
		\begin{split}
\bar{r}_h(\varphi; \mP_h\varphi, e_h^{\varphi}) -
r_h(\varphi_h; \varphi_h, e_h^{\varphi})   
\leq 
C(\varphi, C_T, \widetilde{C}_T) \| e_h^{\varphi}\|_{1,\O}^2 +
C(\varphi, f(\varphi), \varepsilon^{-2}, C_T, \widetilde{C}_T, C_\O) h^{\ell} \,| e_h^{\varphi}|_{1,\O}
 \,.
		\end{split}
	\end{equation*}	
\end{lemma}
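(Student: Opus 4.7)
The plan is to decompose the difference via add--and--subtract, so as to isolate three sources of discrepancy between $\bar{r}_h(\varphi;\mP_h\varphi,e_h^\varphi)$ and $r_h(\varphi_h;\varphi_h,e_h^\varphi)$. Write $\bar{r}_h - r_h = R_1 + R_2 + R_3$, where $R_1 := \bar{r}_h(\varphi;\mP_h\varphi,e_h^\varphi) - r_h(\varphi;\mP_h\varphi,e_h^\varphi)$ (crime from replacing $f'(\varphi)$ by $f'(\PikK\varphi)$), $R_2 := r_h(\varphi;\mP_h\varphi,e_h^\varphi) - r_h(\varphi_h;\mP_h\varphi,e_h^\varphi)$ (difference in the first slot of $r_h$), and $R_3 := r_h(\varphi_h;\mP_h\varphi,e_h^\varphi) - r_h(\varphi_h;\varphi_h,e_h^\varphi)$ (difference in the second slot). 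Using $\mP_h\varphi - \varphi_h = -e_h^\varphi$ we rewrite $R_3 = -r_h(\varphi_h;e_h^\varphi,e_h^\varphi)$, and the pointwise bound $f'(\eta)\geq -1$ (already exploited in Theorem~\ref{theo:property:semi}) together with the $L^2$-stability of $\PikMoneKp$ yields $R_3 \leq |e_h^\varphi|_{1,\O}^2 \leq \|e_h^\varphi\|_{1,\O}^2$.

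For $R_2$, I would use the pointwise factorisation $f'(a)-f'(b) = 3(a+b)(a-b)$ to express $f'(\PikK\varphi) - f'(\PikK\varphi_h) = 3(\PikK\varphi + \PikK\varphi_h)\,\PikK(\varphi-\varphi_h)$. The $L^\infty$-stability of $\PikK$ on shape-regular polygons, combined with Assumptions~\ref{Assump:cont:phase} and~\ref{Assump:disc:phase}, bounds the first factor uniformly, while the uniform $W^{1,\infty}$-bound on $\mP_h\varphi$ from Remark~\ref{remark:projection} controls $\|\PikMoneKp\nabla\mP_h\varphi\|_{L^4(E)}$. An element-wise H\"older inequality with exponents $\infty$-$4$-$4$-$2$, the $L^4$-continuity of $\PikK$, and the Sobolev embedding $H^1(\O)\hookrightarrow L^4(\O)$ then give $|R_2| \lesssim C(\varphi,C_T,\widetilde{C}_T)\, \|e_I^\varphi - e_h^\varphi\|_{L^4(\O)}\, |e_h^\varphi|_{1,\O}$. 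Splitting the $L^4$-norm via the triangle inequality and invoking Lemma~\ref{lemma:error:P_h} for $\|e_I^\varphi\|_{1,\O} \lesssim h^\ell$ produces exactly the two contributions $\|e_h^\varphi\|_{1,\O}^2$ and $h^\ell |e_h^\varphi|_{1,\O}$ claimed in the statement. For $R_1$, the analogous factorisation $f'(\varphi) - f'(\PikK\varphi) = 3(\varphi + \PikK\varphi)(\varphi - \PikK\varphi)$ isolates the projection error on $\varphi$: the prefactor is uniformly bounded in $L^\infty$ by $C_T$, $\nabla\mP_h\varphi$ is uniformly bounded in $L^\infty$ by Remark~\ref{remark:projection}, and combining H\"older with the $L^2$-approximation estimate $\|\varphi-\PikK\varphi\|_{0,E}\lesssim h_E^{\ell+1}|\varphi|_{\ell+1,E}$ from Proposition~\ref{approx:prop:L2} yields $|R_1| \lesssim C(\varphi,C_T)\, h^{\ell+1}\,|\varphi|_{\ell+1,\O}\,|e_h^\varphi|_{1,\O}$, which is subsumed in the target.

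The claimed inequality then follows by summing $R_1 + R_2 + R_3$ and grouping the constants according to the dependencies listed in the statement. I expect the main technical point to be the careful accounting of the $L^\infty$-boundedness chain: transferring the uniform $L^\infty$-bounds on $\varphi$ and $\varphi_h$ to their $\PikK$-projections on polygonal elements under Assumption~\ref{Assump:mesh}, and fully exploiting the $W^{1,\infty}$-regularity of $\mP_h\varphi$ from Remark~\ref{remark:projection}, so that the nonlinear factor $f'(\cdot)$ is absorbed into constants independent of $h$ and only the $H^1$-seminorm of $e_h^\varphi$ survives on the right-hand side; once these boundedness ingredients are in place, the remaining manipulations are routine applications of H\"older, the Sobolev embedding in two dimensions, and the projection estimates already developed in Subsection~\ref{subsec:Ritz}.
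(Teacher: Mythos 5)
Your proof is correct and follows essentially the same route as the paper's: extract the term $-r_h(\varphi_h;e_h^{\varphi},e_h^{\varphi})$, factor the difference of the cubic nonlinearity as $3(a+b)(a-b)$, and control everything with the $L^\infty$-bounds from Assumptions~\ref{Assump:cont:phase} and~\ref{Assump:disc:phase}, Remark~\ref{remark:projection}, and the projection estimates of Proposition~\ref{approx:prop:L2} and Lemma~\ref{lemma:error:P_h}. The only (harmless) deviations are that you split the nonlinearity discrepancy into two pieces through the intermediate $f'(\PikK\varphi)$ where the paper treats $f'(\varphi)-f'(\PikK\varphi_h)$ in one step, and that you bound the quadratic term via $f'\ge -1$ rather than via $\|\PikK\varphi_h\|_{L^\infty}^2+1$; both variants land on the stated estimate.
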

\begin{proof}
By using the definitions of $\overline{r}_h(\cdot;\cdot,\cdot)$ and $r_h(\cdot;\cdot,\cdot)$, we have that
\begin{equation}
\label{eq:R1+R2}
\begin{aligned}
&
\overline{r}_h(\varphi; \mP_h \varphi, e_h^{\varphi})-
r_h( \varphi_h; \varphi_h, e_h^{\varphi})  =
\\
& = \sum_{\E \in \O_h}	
	\Bigl(
	(f'(\varphi) -f'(\PikK \varphi_h))\PikMoneKp \nabla \mP_h \varphi,
  \PikMoneKp  \nabla e_h^{\varphi} \Bigr)_E
- r_h( \varphi_h; e_h^{\varphi}, e_h^{\varphi}) 
\\
& = \sum_{\E \in \O_h}	
	\Bigl(
	3(\varphi^2 - (\PikK \varphi_h)^2)\PikMoneKp \nabla \mP_h \varphi,
  \PikMoneKp  \nabla e_h^{\varphi} \Bigr)_E
- r_h( \varphi_h; e_h^{\varphi}, e_h^{\varphi}) 
\\
& = \sum_{\E \in \O_h}	
	\Bigl(
	3(\varphi - \PikK \varphi_h)(\varphi + \PikK \varphi_h)\PikMoneKp \nabla \mP_h \varphi,
  \PikMoneKp  \nabla e_h^{\varphi} \Bigr)_E
- r_h( \varphi_h; e_h^{\varphi}, e_h^{\varphi}) 
\\
&=: R_1 + R_2 \,.
\end{aligned}
\end{equation}
We first analyse the term $R_1$. 
Employing the Cauchy-Schwarz inequality, the continuity of the $L^2$-projection, recalling Remark \ref{remark:projection}, Assumption \ref{Assump:cont:phase} and Assumption \ref{Assump:disc:phase}, 
using Proposition \ref{approx:prop:L2} and Lemma \ref{lemma:error:P_h} we have:
\begin{equation}
\label{eq:R1}
\begin{aligned}
R_1 & \lesssim 
\sum_{\E \in \O_h}	
 \|\varphi-\PikK \varphi_h \|_{0,E} \,(\|\varphi\|_{L^\infty(E)} + \|\PikK  \varphi\|_{L^\infty(E)}) \|\PikMoneKp \nabla \mP_h \varphi\|_{L^\infty(E)}
   \,  \|\PikMoneKp \nabla e_h^{\varphi}\|_{0,E}
\\
& \lesssim  
(\|\varphi\|_{L^\infty(\Omega)} + \|\varphi_h\|_{L^\infty(\Omega)}) 
\| \nabla \mP_h \varphi\|_{L^\infty(\Omega)}
\sum_{\E \in \O_h}	
 (\|\varphi- \varphi_h \|_{0,E} + \|({\rm I}-\PikK) \varphi \|_{0,E})  
 \,  | e_h^{\varphi}|_{1,E}
 \\
& \lesssim  
C(\varphi, C_T, \widetilde{C}_T)
\sum_{\E \in \O_h}	
 (\|e_h^{\varphi} \|_{0,E} + \|e_I^{\varphi} \|_{0,E} +
 h_E^{\ell+1} |\varphi|_{\ell+1, E})  
 \,  | e_h^{\varphi}|_{1,E}
 \\
& \lesssim  
C(\varphi, C_T, \widetilde{C}_T)	
 (\|e_h^{\varphi} \|_{0,\O} + \|e_I^{\varphi} \|_{1,\O} +
 h^{\ell+1}|\varphi |_{\ell+1,\O})  
 \,  | e_h^{\varphi}|_{1,\O}
 \\
& \lesssim  
C(\varphi, C_T, \widetilde{C}_T)	
 (\|e_h^{\varphi} \|_{0,\O} +
C(\varphi, f(\varphi), \varepsilon^{-2}, C_T, C_\O) h^{\ell})  
 \,  | e_h^{\varphi}|_{1,\O}
  \,.
\end{aligned}
\end{equation}
Using analogous arguments, the term $R_2$ can be bounded as follows:
\begin{equation}
\label{eq:R2}
R_2 \lesssim 
\sum_{\E \in \O_h}	
(\|\PikK \varphi_h\|_{L^\infty(E)}^2 + 1)
   \,  \|\PikMoneKp \nabla e_h^{\varphi}\|_{0,E}^2
   \lesssim  
 C(\widetilde{C}_T)\,  | e_h^{\varphi}|_{1,\Omega}^2 \,.
\end{equation}
The thesis follows collecting \eqref{eq:R1} and \eqref{eq:R2} in \eqref{eq:R1+R2}.
\end{proof}	

For the phase variable estimate, we will also use preliminary results in Subsections~\ref{subsec:prelimi} and \ref{subsec:Ritz}, along with Lemma~\ref{lemma:rh}.
\begin{lemma}[Error estimate for the phase equation]\label{lemma:phase:error}
Let Assumptions \ref{Assump:cont:phase}, \ref{Assump:mesh},  \ref{Assump:disc:phase}, \ref{assump:reg:add} and \ref{Assump:reg:aux} hold. 
Then for a.e. $t \in (0,T]$,  the following error estimate holds 
\begin{equation*}
	\begin{split}
\frac{1}{2} \partial_t ||| e_h^{\varphi}|||^2_{0,\Omega} +  \frac{3\gamma}{4}  |||e_h^{\varphi} |||^2_{2,\O} 
&	\leq 	 
C_{{\rm P}, 1} h^{2m} + 
C_{{\rm P}, 2} ||| \be_h^{\bu} |||^2_{0,\Omega} +
C_{{\rm P}, 3} |||e_h^{\varphi}|||_{0,\O}^2 \,,
	\end{split}
\end{equation*}	
where
$C_{{\rm P}, 1}= C(\bu, \varphi, f(\varphi), \gamma, \varepsilon^{-2}, C_T, \widetilde{C}_T, C_\O)$,
$C_{{\rm P}, 2}= C(\gamma^{-1},  \widetilde{C}_T)$,
$C_{{\rm P}, 3}= C(\varphi, \gamma, \varepsilon^{-2}, C_T, \widetilde{C}_T)$
are positive constants independent of $h$.
\end{lemma}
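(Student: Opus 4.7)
The plan is to test the error identity \eqref{Cahn:error:eq} with $\phi_h = e_h^{\varphi} \in \Hh$. By the coercivity of $a_{\bD,h}$ (item $ii)$ of Proposition \ref{lemma:bound:ch}) together with the algebraic identity $m_h(\partial_t e_h^{\varphi}, e_h^{\varphi}) = \tfrac12 \partial_t |||e_h^{\varphi}|||_{0,\O}^2$, the left-hand side immediately reproduces the two quantities appearing in the target estimate, while the right-hand side splits into four pieces: mass consistency, a stabilization contribution $-\gamma\alpha\, m(e_I^{\varphi}, e_h^{\varphi})$, the semilinear error, and the convective error.

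The first three groups are routine. For the mass consistency $m(\partial_t\varphi, e_h^{\varphi}) - m_h(\partial_t \mP_h\varphi, e_h^{\varphi})$ I would add and subtract $m_h(\partial_t\varphi, e_h^{\varphi})$: the polynomial consistency piece is controlled by Proposition \ref{approx:prop:L2}, while the Ritz-gap $m_h(\partial_t e_I^{\varphi}, e_h^{\varphi})$ is estimated via the second bound in Lemma \ref{lemma:error:P_h}. Young's inequality then produces $h^{2m}$ plus a multiple of $|||e_h^{\varphi}|||_{0,\O}^2$. The stabilization term is treated analogously using the first bound in Lemma \ref{lemma:error:P_h}, and the semilinear term is bounded directly via Lemma \ref{lemma:rh}, whose resulting $\|e_h^{\varphi}\|_{1,\O}^2$ and $h^\ell |e_h^{\varphi}|_{1,\O}$ contributions are reabsorbed using \eqref{ineq:Hi} and Young's inequality.

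The convective contribution $c(\bu;\varphi,e_h^{\varphi}) - c_h(\bu_h;\varphi_h,e_h^{\varphi})$ is the technical core. I would decompose it as
\[
\bigl(c(\bu;\varphi,e_h^{\varphi}) - c_h(\bu;\varphi,e_h^{\varphi})\bigr) + c_h(\bu - \bu_h;\varphi,e_h^{\varphi}) + c_h(\bu_h;\varphi - \varphi_h,e_h^{\varphi}) \,,
\]
handling the first bracket with Lemma \ref{lemma:crimen:c} at order $h^{m+1}$. Substituting \eqref{eq:split:velo} and \eqref{eq:split:phase}, the remaining two terms become $c_h(\be_I^{\bu};\varphi,e_h^{\varphi}) - c_h(\be_h^{\bu};\varphi,e_h^{\varphi}) + c_h(\bu_h;e_I^{\varphi},e_h^{\varphi}) - c_h(\bu_h;e_h^{\varphi},e_h^{\varphi})$, where the last term vanishes by the skew-symmetry of $c_h$. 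For the three surviving pieces I would exploit that $\be_I^{\bu}$, $\be_h^{\bu}$ and $\bu_h$ all belong to $\bZ$ (since $\mS_h \bu, \bu_h \in \Zh \subset \bZ$, and Proposition \ref{approx:virtual:velo:phase} yields the same for the interpolated velocity), together with the $W^{1,\infty}$-regularity of the phase factor supplied by Assumption \ref{Assump:cont:phase} for $\varphi$ and by Remark \ref{remark:projection} for $e_I^{\varphi}$. Applying the sharper bound \eqref{c-bound-2}, combined with Lemma \ref{lemma:error:S_h} to estimate $\|\be_I^{\bu}\|_{0,\O} \lesssim h^{k+1}$, the $L^2$-stability of $\bu_h$ from Theorem \ref{theo:property:semi}, and Young's inequality, I obtain an $h^{2m}$ contribution, the promised $C_{{\rm P},2}|||\be_h^{\bu}|||_{0,\O}^2$ term, and further multiples of $\|e_h^{\varphi}\|_{1,\O}^2$.

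Gathering every bound, a final use of \eqref{ineq:Hi} with a sufficiently small $\kappa$ transfers each accumulated $\|e_h^{\varphi}\|_{1,\O}^2$ into a small fraction of $|e_h^{\varphi}|_{2,\O}^2$, absorbable into the coercive term on the left (yielding the announced $\tfrac{3\gamma}{4}$ coefficient), plus a multiple of $\|e_h^{\varphi}\|_{0,\O}^2$ that loads into $C_{{\rm P},3}$. The principal obstacle is controlling $c_h(\bu_h;e_I^{\varphi},e_h^{\varphi})$ without any pointwise-in-time bound on $\|\bu_h\|_{\bV}$: this is precisely why invoking \eqref{c-bound-2}, together with the kernel inclusion $\Zh \subset \bZ$ and the uniform $W^{1,\infty}$-boundedness of $\mP_h\varphi$ from Remark \ref{remark:projection}, is indispensable, so that only $\|\bu_h\|_{0,\O}$ — uniformly bounded by Theorem \ref{theo:property:semi} — appears and can be absorbed into the constants.
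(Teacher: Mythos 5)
Your overall skeleton coincides with the paper's: test \eqref{Cahn:error:eq} with $e_h^{\varphi}$, treat the mass, stabilization and semilinear groups with Proposition~\ref{approx:prop:L2}, Lemma~\ref{lemma:error:P_h} and Lemma~\ref{lemma:rh}, control the variational crime with Lemma~\ref{lemma:crimen:c}, and absorb the accumulated $H^1$-seminorms through \eqref{ineq:Hi}. The difficulty is concentrated, as you correctly sense, in the convective term --- but your treatment of it has a genuine gap.

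Your splitting pairs the phase difference with the \emph{discrete} velocity, producing the term $c_h(\bu_h; e_I^{\varphi}, e_h^{\varphi})$, which you propose to bound with \eqref{c-bound-2}. That estimate gives
\[
c_h(\bu_h; e_I^{\varphi}, e_h^{\varphi}) \lesssim \|e_I^{\varphi}\|_{W^{1,\infty}(\O)}\,\|\bu_h\|_{0,\O}\,\|e_h^{\varphi}\|_{1,\O}\,,
\]
and Remark~\ref{remark:projection} only guarantees that $\|\mP_h\varphi\|_{W^{1,\infty}(\O)}$ --- hence $\|e_I^{\varphi}\|_{W^{1,\infty}(\O)}$ --- is \emph{uniformly bounded}, not that it decays like a power of $h$. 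After Young's inequality you are therefore left with an $O(1)$ remainder instead of the required $C_{{\rm P},1}h^{2m}$, and the convergence rate is lost. You cannot repair this by switching to \eqref{c-bound-1} either, since, as you yourself observe, Theorem~\ref{theo:property:semi} provides no pointwise-in-time bound on $\|\bu_h\|_{\bV}$. The paper sidesteps the trap by ordering the splitting the other way: $c_h(\bu;\varphi,\cdot)-c_h(\bu_h;\varphi_h,\cdot)=c_h(\bu;\varphi-\varphi_h,\cdot)+c_h(\bu-\bu_h;\varphi_h,\cdot)$, so that the phase difference is tested against the \emph{continuous} velocity $\bu$, whose $\bV$-norm is available pointwise in time; then \eqref{c-bound-1} applies in the form $\|\bu\|_{\bV}\,\|e_I^{\varphi}\|_{H}\,\|e_h^{\varphi}\|_{H}$ and the rate $\|e_I^{\varphi}\|_{H}\lesssim h^{\ell-1}$ from Lemma~\ref{lemma:error:P_h} supplies the missing power of $h$, while the velocity difference is paired with $\varphi_h$ and handled, exactly as in your argument, via \eqref{c-bound-2} and Assumption~\ref{Assump:disc:phase}. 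The remaining pieces of your proposal --- the crime term via Lemma~\ref{lemma:crimen:c}, the terms $c_h(\be_I^{\bu};\varphi,e_h^{\varphi})$ and $c_h(\be_h^{\bu};\varphi,e_h^{\varphi})$ via \eqref{c-bound-2} (legitimate, since $\be_I^{\bu},\be_h^{\bu}\in\bZ$ and $\varphi\in W^{2,\infty}(\O)$), and the final absorption leaving $\tfrac{3\gamma}{4}|||e_h^{\varphi}|||_{2,\O}^2$ on the left --- are sound.
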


\begin{proof}	
Choosing  $\phi_h = e_h^{\varphi}\in \Hh$ in the error equation~\eqref{Cahn:error:eq}, we have
	\begin{equation}\label{Cahn:error:eq2}
		\begin{aligned}
		\frac{1}{2}\partial_t||| e^{\varphi}_h|||^2_{0,\O} &+ \gamma |||e^{\varphi}_h|||^2_{2,\O} 
		= \bigl(m(\partial_t \varphi, e_h^{\varphi})-m_{h}(\partial_t \mP_h \varphi, e_h^{\varphi}) \bigr) - \gamma \alpha m( e^{\varphi}_I, e_h^{\varphi})  \\
		&+   \gamma \varepsilon^{-2}\big(\bar{r}_h(\varphi; \mP_h\varphi, e_h^{\varphi}) -r_h(\varphi_h; \varphi_h, e_h^{\varphi})  \big) +
		\big(c(\bu;\varphi,e_h^{\varphi})-  c_h(\bu_h;\varphi_h,e_h^{\varphi}) \big)\\
		& =: S_{m1} + \gamma \alpha S_{m2}  +\gamma \varepsilon^{-2} S_r + S_c \,.  	
		\end{aligned}
	\end{equation}
For the term $S_{m1}$ we use 
the consistency of forms $m^{\E}_h(\cdot,\cdot)$,
item $ii)$ in  Proposition \ref{lemma:bound:ch}, and  \eqref{equiv:norms} along with Proposition \ref{approx:prop:L2} and Lemma \ref{lemma:error:P_h},
to derive
\begin{equation}\label{Im}
\begin{split}
S_{m1}
&= \sum_{\E\in\Oh} \big( m^{\E}( \partial_t \varphi- \Pi_{\E}^{0,\ell}(\partial_t  \ \varphi), e_h^{\varphi})- m^{\E}_h(\Pi_{\E}^{0,\ell}(\partial_t   \varphi)-\partial_t  \mP_h \varphi , e_h^{\varphi})
\big) \\
& \leq C
\sum_{\E\in\Oh} \bigl( \|({\rm I} - \Pi_{\E}^{0,\ell})\partial_t   \varphi \| + \| \partial_t e_I^{\varphi}\|_{0,E}  \bigr) \, \|e_h^{\varphi}\|_{0,E}
\\
& 
\leq C (h^{\ell+1}   |\partial_t  \varphi|_{\ell+1,\O} + \|\partial_t e_I^{\varphi}\|_{1,\O})
||| e_h^{\varphi}|||_{0,\O} 
\\
& \leq C(\varphi, f(\varphi), \varepsilon^{-2}, C_T, C_\O) h^{2\ell} + 
C ||| e_h^{\varphi}|||^2_{0,\O}\,.
\end{split}
\end{equation}
For the second term, employing similar arguments we obtain
\begin{equation}\label{Ivarphi}
	\begin{split}
	\gamma \alpha S_{m2} & = - \gamma \alpha m(e^{\varphi}_I, e_h^{\varphi}) 
	\leq \gamma \alpha\|e^{\varphi}_I\|_{0,\O} \|e^{\varphi}_h\|_{0,\O} 
	 \leq  
	C(\gamma) \|e^{\varphi}_I\|^2_{1,\O} +  C|||e^{\varphi}_h|||^2_{0,\O}
	\\		
	 & \leq C(\varphi, f(\varphi), \gamma, \varepsilon^{-2}, C_T, C_\O) h^{2 \ell} +  C|||e^{\varphi}_h|||^2_{0,\O} \,.
	\end{split}
\end{equation}	
For the third term we use Lemma~\ref{lemma:rh}, the Young inequality,
inequality \eqref{ineq:Hi} and the equivalences \eqref{equiv:norms},
 to obtain 
\begin{equation}\label{Ir}
	\begin{split}
	\gamma \varepsilon^{-2}	I_{r}
	  & \leq 
	 \gamma \varepsilon^{-2} C(\varphi, C_T, \widetilde{C}_T) (\| e_h^{\varphi}\|_{0,\O}^2 +	| e_h^{\varphi}|_{1,\O}^2) 
+  C(\varphi, f(\varphi), \gamma, \varepsilon^{-2}, C_T, \widetilde{C}_T, C_\O) h^{2\ell} 
\\
	  & \leq 
	  C(\varphi, \gamma, \varepsilon^{-2}, C_T, \widetilde{C}_T)
	  |||e_h^{\varphi}|||_{0,\O}^2 + 
	  C(\varphi, f(\varphi), \gamma, \varepsilon^{-2}, C_T, \widetilde{C}_T, C_\O) h^{2\ell}  + \frac{\gamma}{8} |||e_h^{\varphi}|||_{2,\O}^2 \,.
	\end{split}
\end{equation}	
The term $I_c$ requires further analysis. In order to control this term, we add and subtract $c_h(\bu;\varphi,e_h^{\varphi})$, which yields the following identity:
\begin{equation}
\label{eq:Ic-5.9}
\begin{split}
I_{c} 
& = 
 \big(c(\bu;\varphi,e_h^{\varphi})-c_h(\bu;\varphi,e_h^{\varphi})\big)  + 
\big( c_h(\bu;\varphi,e_h^{\varphi})-c_h(\bu_h;\varphi_h,e_h^{\varphi})\big)  =: I_{c1} + I_{c2} \,. 
	\end{split}
\end{equation}
For the first term from Lemma~\ref{lemma:crimen:c} and the Young inequality we have 
\begin{equation*}
\label{eq:Ic1-5.9}
	\begin{split}
I_{c1} &\leq 
C(\bu, \varphi) h^{m+1} \|e_h^{\varphi}\|_H
\leq C(\bu, \varphi, \gamma^{-1}) h^{2(m+1)} 
+\frac{\gamma}{16} |||e_h^{\varphi}|||^2_{2,\O}\,.
	\end{split} 
\end{equation*}	
On the other hand, for the second term we make use of the identities~\eqref{eq:split:velo} and~\eqref{eq:split:phase}, together with the skew-symmetry of $c_h(\cdot;\cdot,\cdot)$ and obtain
\[
I_{c2}
	= c_h(\bu; \varphi - \varphi_h, e_h^{\varphi}) + c_h(\bu - \bu_h; \varphi_h, e_h^{\varphi})  
	= c_h(\bu; e_I^{\varphi}, e_h^{\varphi}) - c_h(\be_h^{\bu}; \varphi_h, e_h^{\varphi}) +  c_h(\be_I^{\bu}; \varphi_h, e_h^{\varphi})\,.
\]
Thus, employing bounds \eqref{c-bound-1}, \eqref{c-bound-2}, \eqref{ineq:Hi}, Assumption \ref{Assump:disc:phase}, Lemma \ref{lemma:error:S_h} and Lemma \ref{lemma:error:P_h}, we derive
\begin{equation*}
\label{eq:Ic2-5.9}
\begin{aligned}
	I_{c2}
	&\leq C \| \bu\|_{\bV}  \| e_I^{\varphi}\|_{H}  \| e_h^{\varphi}\|_{H} + C\| \varphi_h\|_{W^{1, \infty}(\Omega)} 
	( \|\be_I^{\bu}\|_{0, \Omega} + \|\be_h^{\bu}\|_{0, \Omega} ) \|e_h^{\varphi}\|_{1, \Omega}
	\\
& \leq 	C(\bu, \varphi, f(\varphi), \varepsilon^{-2}, C_T, C_\O)  h^{\ell-1} \,  \| e_h^{\varphi}\|_{H} +
C\| \varphi_h\|_{W^{1, \infty}(\Omega)} ( h^{k+1}\|\bu\|_{k+1, \Omega} + \|\be_h^{\bu}\|_{0, \Omega} ) \|e_h^{\varphi}\|_{1, \Omega}
	\\
& \leq 
	C(\bu, \varphi, f(\varphi), \gamma^{-1}, \varepsilon^{-2}, C_T, C_\O)  h^{2(\ell-1)}  +   
C(\bu, \gamma^{-1}, \widetilde{C}_T) h^{2(k+1)} +
\\
& \quad + 
C(\gamma^{-1}, \widetilde{C}_T)
|||\be_h^{\bu}|||_{0, \Omega}^2 + 
C(\gamma) |||e_h^{\varphi}|||_{0, \Omega}^2  + 
\frac{\gamma}{16}|||e_h^{\varphi}|||_{2, \Omega}^2.
\end{aligned}	
\end{equation*}
By combining the above bounds in \eqref{eq:Ic-5.9} we have that 
\begin{equation}\label{Ic}
\begin{aligned}
I_c &\leq 
C(\bu, \varphi, f(\varphi), \gamma^{-1}, \varepsilon^{-2}, C_T, \widetilde{C}_T, C_\O)   h^{2m} +
\\
& \quad  + 
 C(\gamma^{-1}, \widetilde{C}_T) ||| \be_h^{\bu} |||^2_{0,\Omega} +
 C(\gamma) |||e_h^{\varphi}|||_{0, \Omega}^2  + 
\frac{\gamma}{8} |||e_h^{\varphi}|||^2_{2,\O}
\,.
\end{aligned}
\end{equation}	
The desired result follows by inserting the  estimates \eqref{Im}, \eqref{Ic}, \eqref{Ir}, \eqref{Ivarphi} into \eqref{Cahn:error:eq2}.
\end{proof}

The following results establish error estimates for the semi-discrete and fully discrete virtual schemes~\eqref{disc:form1} and~\eqref{fully:disc:schm}, respectively.
\begin{theorem}[Error estimate for the semi-discrete scheme]\label{converg:semidisc}
Let Assumptions \ref{Assump:cont:phase}, \ref{Assump:mesh},  \ref{Assump:disc:phase}, \ref{assump:reg:add} and \ref{Assump:reg:aux} hold. 
Let $(\bu,\varphi)$ be the solution  of problem~\eqref{form:kernel}, 
let $(\bu_h,\varphi_h)$ be the solution  of problem~\eqref{disc:form:kernel} and let $m= \min\{k, \ell-1\}$. Then the for a.e. $t\in (0,T]$ following error estimate holds	
\begin{equation*}
	\begin{split}
	&\|\bu- \bu_h\|_{L^{\infty}(0,t;\bL^2(\O))} +   \|\varphi- \varphi_h\|_{L^{\infty}(0,t;L^2(\O))} 	+\|\bu- \bu_h\|_{L^{2}(0,t;\bV)} +  \|\varphi- \varphi_h\|_{L^{2}(0,t;H)}  \leq 
\widetilde{C}_{{\rm est}}	
	h^{m} \,,
	\end{split}
\end{equation*}
where  $\widetilde{C}_{{\rm est}}$ is a positive constant depending only on the data, on the solution $(\bu,\varphi)$, on the physical parameters, on the constants $C_T$, $\widetilde{C}_T$, $C_\O$ in Assumptions
\ref{Assump:cont:phase}, \ref{Assump:disc:phase}, \ref{Assump:reg:aux}
and is independent of $h$.
\end{theorem}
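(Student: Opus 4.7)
The plan is to combine the two a priori differential inequalities for the discrete error components (Lemmas~\ref{lemma:momentum:error} and~\ref{lemma:phase:error}), apply the continuous Gr\"{o}nwall inequality, and conclude via the triangle inequality together with the approximation properties of the Stokes and Cahn--Hilliard Ritz projections.

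First, I would use the splittings \eqref{eq:split:velo} and \eqref{eq:split:phase}, namely $\bu - \bu_h = \be^{\bu}_I - \be^{\bu}_h$ and $\varphi - \varphi_h = e^{\varphi}_I - e^{\varphi}_h$. The projection components $\be^{\bu}_I = \bu - \mS_h \bu$ and $e^{\varphi}_I = \varphi - \mP_h \varphi$ are controlled in the relevant norms by Lemmas~\ref{lemma:error:S_h} and~\ref{lemma:error:P_h}, giving bounds of order $h^{k+1}$ in $\bL^2(\Omega)$ (respectively $h^{\ell}$ in $L^2(\Omega)$), $h^{k}$ in $\bV$ (respectively $h^{\ell-1}$ in $H$), all of which are dominated by $h^m$ with $m=\min\{k,\ell-1\}$.

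Next, I would add the inequalities furnished by Lemma~\ref{lemma:momentum:error} and Lemma~\ref{lemma:phase:error}. The term $\tfrac{\gamma}{4}|||e^{\varphi}_h|||^2_{2,\Omega}$ on the right-hand side of the momentum estimate is absorbed into the $\tfrac{3\gamma}{4}|||e^{\varphi}_h|||^2_{2,\Omega}$ on the left-hand side of the phase estimate, yielding, for a.e. $t \in (0,T]$,
\begin{equation*}
\tfrac{1}{2}\partial_t \bigl( |||\be^{\bu}_h|||^2_{0,\Omega} + |||e^{\varphi}_h|||^2_{0,\Omega} \bigr)
+ \tfrac{\nu}{2}|||\be^{\bu}_h|||^2_{1,\Omega}
+ \tfrac{\gamma}{2}|||e^{\varphi}_h|||^2_{2,\Omega}
\leq C_\star h^{2m} + C_\sharp \bigl( |||\be^{\bu}_h|||^2_{0,\Omega} + |||e^{\varphi}_h|||^2_{0,\Omega} \bigr),
\end{equation*}
with $C_\star, C_\sharp$ depending only on the data, the physical parameters, the continuous solution and the regularity constants. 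Integrating over $(0,t)$ and observing that the choice of $(\bu_{h,0},\varphi_{h,0})$ as the interpolants of $(\bu_0,\varphi_0)$ together with Lemmas~\ref{lemma:error:S_h}--\ref{lemma:error:P_h} and Proposition~\ref{approx:virtual:velo:phase} ensures $|||\be^{\bu}_h(0)|||^2_{0,\Omega} + |||e^{\varphi}_h(0)|||^2_{0,\Omega} \lesssim h^{2m}$, I can apply the Gr\"{o}nwall inequality (Lemma~\ref{cont:gronwall}) to the sum $|||\be^{\bu}_h(t)|||^2_{0,\Omega} + |||e^{\varphi}_h(t)|||^2_{0,\Omega}$, obtaining a pointwise-in-$t$ bound of order $h^{2m}$ and, as a byproduct of the integration, a bound of the same order for the time integrals of $|||\be^{\bu}_h|||^2_{1,\Omega}$ and $|||e^{\varphi}_h|||^2_{2,\Omega}$.

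The conclusion then follows by taking the essential supremum over $t\in (0,T]$, using the norm equivalences \eqref{equiv:norms}, and invoking the triangle inequality with the interpolation bounds for $\be^{\bu}_I$ and $e^{\varphi}_I$. The main obstacle is purely bookkeeping rather than conceptual: one has to verify that the coupling in the two differential inequalities can be decoupled by absorption of $\tfrac{\gamma}{4}|||e^{\varphi}_h|||^2_{2,\Omega}$ without incurring any $|||\be^{\bu}_h|||^2_{1,\Omega}$ on the right-hand side (which indeed does not appear in the statement of Lemma~\ref{lemma:phase:error}), and that the Gr\"{o}nwall step produces a constant $\widetilde{C}_{\mathrm{est}}$ depending only on the admissible quantities listed in the theorem. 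The somewhat delicate point is controlling $|||e^{\varphi}_h(0)|||^2_{0,\Omega}$ at the right rate despite the interpolation error of $\varphi_0$ being $O(h^{\ell+1})$ in $L^2$: this is safely dominated by $h^{2m}$ since $\ell+1 \ge m+2 > m$.
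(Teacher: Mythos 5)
Your proposal is correct and follows essentially the same route as the paper: sum the differential inequalities of Lemmas~\ref{lemma:momentum:error} and~\ref{lemma:phase:error} (absorbing the $\tfrac{\gamma}{4}|||e_h^{\varphi}|||^2_{2,\Omega}$ term to arrive at the $\tfrac{\nu}{2}$, $\tfrac{\gamma}{2}$ coefficients), apply the continuous Gr\"{o}nwall inequality, and conclude via the splittings \eqref{eq:split:velo}--\eqref{eq:split:phase}, the triangle inequality, and the approximation properties of $\mS_h$ and $\mP_h$. Your additional remarks on the initial-error rate and on the absence of $|||\be_h^{\bu}|||^2_{1,\Omega}$ on the right-hand side of the phase estimate are correct verifications of details the paper leaves implicit.
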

\begin{proof}
We combine Lemmas~\ref{lemma:momentum:error} and~\ref{lemma:phase:error},  to obtain for a.e. $t \in (0, T]$
\begin{equation*}
	\begin{split}	
\frac{1}{2} \partial_t ||| \be_h^{\bu}|||^2_{0,\Omega} + \frac{1}{2} \partial_t ||| e_h^{\varphi}|||^2_{0,\Omega} &+ \frac{\nu}{2} ||| \be_h^{\bu}|||^2_{1,\O} +\frac{\gamma}{2}  |||e_h^{\varphi}||||^2_{2,\O}\\\
& \leq  
(C_{{\rm M}, 1} + C_{{\rm P}, 1})  h^{2m} + 
  (C_{{\rm M}, 2} + C_{{\rm P}, 2}) |||\be_h^{\bu}|||^2_{0,\Omega} +  C_{{\rm P}, 3} |||e_h^{\varphi}|||^2_{0,\Omega} .
	\end{split}
\end{equation*}
Then, the estimate for the velocity and phase fields follows from  the Gronw\"{a}ll inequality (cf. Lemma \ref{cont:gronwall}), the identities \eqref{eq:split:velo}, \eqref{eq:split:phase}, triangular inequality, along with the approximation properties of the operators $\mS_h$ and $\mP_h$ (cf. Lemmas~\ref{lemma:error:S_h} and~\ref{lemma:error:P_h}).

\end{proof}	

We now state a convergence result for the pressure variable. The proof is only sketched, as it relies on classical Navier–Stokes arguments.

\begin{theorem}
[Error estimate for the pressure variable]
\label{thm:pressure}
Let Assumptions \ref{Assump:cont:phase}, \ref{Assump:mesh},  \ref{Assump:disc:phase}, \ref{assump:reg:add} and \ref{Assump:reg:aux} hold. 
Let  $(\bu, \widehat{p}, \varphi)$ be the solution  of problem~\eqref{form1}, let $(\bu_h, \widehat{p}_h, \varphi_h)$ be the solution  of problem~\eqref{disc:form1} and let $m= \min\{k, \ell-1\}$. 
Assume further that $\widehat{p} \in L^2(0, T; H^k(\O))$.
Then the for a.e. $t\in (0,T]$ following error estimate holds	
\[
\|\widehat{p}-\widehat{p}_h\|_{L^{2}(0,t;Q)} \leq 
\widetilde{C}_{{\rm est}, p} h^{m} 
\]
where  $\widetilde{C}_{{\rm est}, p}$ is a positive constant depending only on the data, on the solution $(\bu, \widehat{p}, \varphi)$, on the physical parameters, on the constants $C_T$, $\widetilde{C}_T$, $C_\O$ in Assumptions
\ref{Assump:cont:phase}, \ref{Assump:disc:phase}, \ref{Assump:reg:aux}
and is independent of $h$.
\end{theorem}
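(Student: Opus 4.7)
\medskip

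\noindent\textbf{Proof plan.} The plan is to combine the discrete inf-sup condition \eqref{eq:infsup} with the momentum error equation obtained by subtracting the first line of \eqref{disc:form1} from the first line of \eqref{form1}, now tested against a general $\bv_h \in \Vh$ (rather than a divergence-free test function as in Lemma \ref{lemma:momentum:error}). The semi-discrete velocity/phase error bounds from Theorem \ref{converg:semidisc} and the variational-crime estimates in Lemmas \ref{lemma:crimen:cF}, \ref{lemma:crimen:c}, \ref{lemma:crimen:d}, \ref{lemma:crimen:d-B} will then enter directly as the right-hand side.

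\medskip

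\noindent First I would introduce $\widehat{p}_I \in \Qh$ as the (piecewise) $L^2$-projection of $\widehat{p}(\cdot,t)$ onto $\Qh$. Since $\Qh$ consists of piecewise $\P_{k-1}$ functions and $\widehat{p}\in H^k(\O)$, Proposition \ref{approx:prop:L2} yields
\[
\|\widehat{p} - \widehat{p}_I\|_{L^2(0,T;Q)} \lesssim h^k \|\widehat{p}\|_{L^2(0,T;H^k(\O))} \lesssim h^{m} \|\widehat{p}\|_{L^2(0,T;H^k(\O))}.
\]
Moreover, since $\div \Vh \subseteq \P_{k-1}(\Oh)$ (property \textbf{(P3)}, item $(iii)$ below the definition \eqref{eq:global:pressure}), by the $L^2$-orthogonality of the projection one has $b(\bv_h,\widehat{p}-\widehat{p}_I)=0$ for every $\bv_h\in\Vh$. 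By the triangle inequality it then suffices to estimate $\|\widehat{p}_I-\widehat{p}_h\|_Q$, and by the discrete inf-sup condition \eqref{eq:infsup}
\[
\widetilde{\beta}\,\|\widehat{p}_I-\widehat{p}_h\|_Q \leq \sup_{\bv_h\in\Vh\setminus\{\0\}} \frac{b(\bv_h,\widehat{p}_I-\widehat{p}_h)}{\|\bv_h\|_{\bV}} = \sup_{\bv_h\in\Vh\setminus\{\0\}} \frac{b(\bv_h,\widehat{p}-\widehat{p}_h)}{\|\bv_h\|_{\bV}}.
\]

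\medskip

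\noindent Next, from the momentum equations in \eqref{form1} and \eqref{disc:form1} I would derive, for every $\bv_h\in\Vh$,
\begin{equation*}
\begin{aligned}
b(\bv_h,\widehat{p}-\widehat{p}_h)
&= \bigl[m_F(\partial_t\bu,\bv_h)-m_{F,h}(\partial_t\bu_h,\bv_h)\bigr]
+ \nu\bigl[a_{\bnabla}(\bu,\bv_h)-a_{\bnabla,h}(\bu_h,\bv_h)\bigr] \\
&\quad + \bigl[c_F(\bu;\bu,\bv_h)-c_{F,h}(\bu_h;\bu_h,\bv_h)\bigr]
+ \lambda \bigl[d(\varphi;\varphi,\bv_h)-d_h(\varphi_h;\varphi_h,\bv_h)\bigr] \\
&\quad - \bigl[(\bg,\bv_h)_\O-(\bg_h,\bv_h)_\O\bigr].
\end{aligned}
\end{equation*}
Each bracket splits into a consistency (polynomial-projection) contribution plus a discrete-solution error contribution, exactly as in the proof of Lemma \ref{lemma:momentum:error} with $\bv_h$ now unrestricted. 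The consistency parts are of order $h^m\|\bv_h\|_{\bV}$ by Proposition \ref{approx:prop:L2} and Lemmas \ref{lemma:crimen:cF}, \ref{lemma:crimen:c}, \ref{lemma:crimen:d}; the discrete-solution parts reduce, after the usual manipulations, to $\|\be_h^{\bu}\|_{\bV}$, $\|e_h^{\varphi}\|_{H}$ and $\|e_I^{\bu}\|_{\bV}+\|e_I^{\varphi}\|_{H}$ factors times $\|\bv_h\|_{\bV}$. All of these are controlled in $L^2(0,T)$ by Theorem \ref{converg:semidisc} together with Lemmas \ref{lemma:error:S_h}--\ref{lemma:error:P_h}, yielding an $O(h^m)$ bound after squaring, integrating in $(0,t)$, and dividing by $\|\bv_h\|_{\bV}$.

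\medskip

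\noindent The main technical obstacle lies in handling the time-derivative term $m_F(\partial_t\bu,\bv_h)-m_{F,h}(\partial_t\bu_h,\bv_h)$. I would decompose it as
\[
\bigl[m_F(\partial_t\bu,\bv_h)-m_{F,h}(\partial_t\bu,\bv_h)\bigr] + m_{F,h}\bigl(\partial_t(\bu-\mS_h\bu),\bv_h\bigr) - m_{F,h}(\partial_t\be_h^{\bu},\bv_h).
\]
The first two summands are standard and deliver an $h^{k}\|\bv_h\|_{\bV}$ bound via polynomial consistency of $m_{F,h}$ and the time-differentiated analogue of Lemma \ref{lemma:error:S_h} (which follows from differentiating the defining relation \eqref{Stokes:projector} in time, using $\bu\in H^1(0,T;\bH^{k}(\O))$ from Assumption \ref{assump:reg:add}). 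The delicate term is the one involving $\partial_t\be_h^{\bu}$: a direct $L^2(0,T;\bL^2(\O))$ bound on $\partial_t\be_h^{\bu}$ is not immediately available from Theorem \ref{converg:semidisc}, so I would control it in the weaker $L^2(0,T;\bV')$ norm by testing the momentum error equation \eqref{NS:error:eq} with $\bv_h\in\Vh$ (not restricted to $\Zh$) and dualising; this gives the required $\|\partial_t\be_h^{\bu}\|_{\bV'}\lesssim h^m$ estimate upon combining the bounds already established for the remaining terms. Collecting everything and integrating in time yields the asserted $O(h^m)$ rate for the pressure in $L^2(0,t;Q)$.
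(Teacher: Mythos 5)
Your overall strategy coincides with the one the paper sketches: the discrete inf-sup condition \eqref{eq:infsup}, the momentum error identity obtained by subtracting the first lines of \eqref{disc:form1} and \eqref{form1}, and the bounds already established in Theorem \ref{converg:semidisc}, Proposition \ref{lemma:bound:ch} and the consistency lemmas. The treatment of the pressure interpolant via $b(\bv_h,\widehat{p}-\widehat{p}_I)=0$ and the splitting of each bracket into a consistency part plus a discrete-error part are exactly what the (two-line) proof in the paper implicitly relies on.

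There is, however, one step in your handling of the time-derivative term that, as written, would fail. The identity \eqref{NS:error:eq} is derived from the kernel formulation \eqref{form:kernel}, in which the pressure has been eliminated precisely by restricting the test functions to $\Zh\subseteq\bZ$. It is therefore not legitimate to ``test \eqref{NS:error:eq} with $\bv_h\in\Vh$ not restricted to $\Zh$'': for a general $\bv_h\in\Vh$ the term $b(\bv_h,\widehat{p}-\widehat{p}_h)$ reappears on the right-hand side, and the resulting bound on $\partial_t\be_h^{\bu}$ in the $\Vh$-dual norm would already involve the pressure error you are trying to estimate — the argument becomes circular. Testing only with $\bv_h\in\Zh$ gives a bound in the dual norm of $\Zh$, which does not control the quotient $m_{F,h}(\partial_t\be_h^{\bu},\bv_h)/\|\bv_h\|_{\bV}$ over all of $\Vh$ appearing in the inf-sup argument. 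The standard repair is to observe that $\be_h^{\bu}(t)=\bu_h(t)-\mS_h\bu(t)\in\Zh$ for every $t$, hence $\partial_t\be_h^{\bu}\in\Zh$ is itself an admissible test function in \eqref{NS:error:eq}; choosing $\bv_h=\partial_t\be_h^{\bu}$ yields
\begin{equation*}
|||\partial_t\be_h^{\bu}|||_{0,\O}^2+\frac{\nu}{2}\,\partial_t|||\be_h^{\bu}|||_{1,\O}^2
\lesssim \text{(terms already bounded in Lemma \ref{lemma:momentum:error})},
\end{equation*}
and integrating in time gives $\|\partial_t\be_h^{\bu}\|_{L^2(0,t;\bL^2(\O))}\lesssim h^{m}$ under Assumption \ref{assump:reg:add}. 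This $L^2$-in-space bound pairs correctly with $\|\bv_h\|_{0,\O}\lesssim\|\bv_h\|_{\bV}$ inside the inf-sup quotient, and the rest of your argument then goes through as described.
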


\begin{proof}
The desired bound follows from the error bounds for the velocity and phase variables in Theorem \ref{converg:semidisc}, along with the inf-sup condition~\eqref{eq:infsup}, the bounds in Proposition \ref{lemma:bound:ch},
the approximation properties in Proposition \ref{approx:virtual:velo:phase}.
\end{proof}

As already mentioned, the main novelty of the present paper is the spatial discretization, thus we focused mainly on the first
(spatial) source of error, as shown in Theorems \ref{converg:semidisc} \ref{thm:pressure}. However, in order to detail the behaviour of the fully-discrete method, we state the following result.

\begin{theorem}[Error estimate for the fully-discrete scheme]\label{converg:fullydisc}
Let Assumptions \ref{Assump:cont:phase}, \ref{Assump:mesh},  \ref{Assump:disc:phase}, \ref{assump:reg:add} and \ref{Assump:reg:aux} hold. 
Let $(\bu, \widehat{p}, \varphi)$ be the solution  of problem~\eqref{form1}, let $\{(\bu_h^n, \widehat{p}_h^n, \varphi_h^n)\}_{n=1}^N$ be the solution  of problem~\eqref{fully:disc:schm1} and let $m= \min\{k, \ell-1\}$. 
Assume further that $\widehat{p} \in L^2(0, T; H^k(\O))$.
Then the for all $n=1, \dots, N$  the following error estimate holds	
\[
\begin{aligned}
& \|\bu- \bu_h\|_{\ell^{\infty}(0,t_n;\bL^2(\O))} +   \|\varphi- \varphi_h\|_{\ell^{\infty}(0,t_n;L^2(\O))} 	+\\
& \quad + \|\bu- \bu_h\|_{\ell^{2}(0,t_n;\bV)} +  \|\varphi- \varphi_h\|_{\ell^{2}(0,t_n;H)}  \leq 
\widehat{C}_{{\rm est}}	
(h^{m} + \tau) 
\\
&\|\widehat{p}-\widehat{p}_h\|_{\ell^2(0,t_n;Q)} \leq 
\widehat{C}_{{\rm est}, p} (h^{m} + \tau) 
\end{aligned}
\]
where  $\widehat{C}_{{\rm est}}$  and $\widehat{C}_{{\rm est}, p}$  are positives constant depending only on the data, on the solution $(\bu, \widehat{p}, \varphi)$, on the physical parameters, on the constants $C_T$, $\widetilde{C}_T$ $C_\O$ in Assumptions
\ref{Assump:cont:phase} and Assumption \ref{Assump:f-disc:phase}, \ref{Assump:reg:aux}
and are independent of $h$ and $\tau$.
\end{theorem}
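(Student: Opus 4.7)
The plan is to follow the structure of Theorem \ref{converg:semidisc} (for velocity and phase) and Theorem \ref{thm:pressure} (for pressure), but carried out at the fully-discrete level. First I split the errors via the Ritz projections already introduced: for each $n = 1,\dots,N$, write
\[
\bu(t_n) - \bu_h^n = (\bu(t_n) - \mathcal{S}_h \bu(t_n)) - (\bu_h^n - \mathcal{S}_h \bu(t_n)) =: \be_{I}^{\bu,n} - \be_{h}^{\bu,n},
\]
\[
\varphi(t_n) - \varphi_h^n = (\varphi(t_n) - \mathcal{P}_h \varphi(t_n)) - (\varphi_h^n - \mathcal{P}_h \varphi(t_n)) =: e_{I}^{\varphi,n} - e_{h}^{\varphi,n}.
\]
The projection errors $\be_{I}^{\bu,n}$ and $e_{I}^{\varphi,n}$ are already controlled by Lemmas \ref{lemma:error:S_h} and \ref{lemma:error:P_h}, contributing terms of order $h^{k+1}$ and $h^\ell$ respectively. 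So the task is to estimate the discrete parts $\be_{h}^{\bu,n}$ and $e_{h}^{\varphi,n}$.

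Next I derive error equations by subtracting \eqref{fully:disc:schm} from the continuous problem \eqref{form:kernel} evaluated at $t = t_n$. Here the new ingredient, compared to the semi-discrete proof, is the temporal consistency error: writing
\[
m_F(\partial_t \bu(t_n), \bv_h) = m_F(\delta_t \bu(t_n), \bv_h) + m_F(\partial_t \bu(t_n) - \delta_t \bu(t_n), \bv_h),
\]
and similarly for the phase variable, a standard Taylor expansion gives
\[
\|\partial_t \bu(t_n) - \delta_t \bu(t_n)\|_{0,\O}^2 \leq \tau \int_{t_{n-1}}^{t_n} \|\partial_{tt} \bu\|_{0,\O}^2 \, d\sigma,
\]
provided the corresponding time regularity of $\bu$ and $\varphi$ is assumed (implicitly in $\widehat{C}_{\rm est}$). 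Testing the error equations with $\bv_h = \be_{h}^{\bu,n} \in \Zh$ and $\phi_h = e_{h}^{\varphi,n} \in \Hh$ and applying the telescopic identities
\[
m_{F,h}(\delta_t \be_{h}^{\bu,n}, \be_{h}^{\bu,n}) = \frac{1}{2\tau}\bigl(|||\be_{h}^{\bu,n}|||_{0,\O}^2 - |||\be_{h}^{\bu,n-1}|||_{0,\O}^2\bigr) + \frac{1}{2\tau} |||\be_{h}^{\bu,n} - \be_{h}^{\bu,n-1}|||_{0,\O}^2,
\]
(and the analogous identity for $m_h$), I reproduce the estimates of Lemmas \ref{lemma:momentum:error} and \ref{lemma:phase:error} term by term. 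The nonlinear contributions $c_F$, $c$, $d$, and $r_h/\bar r_h$ are handled exactly as in the semi-discrete proof, since the boundedness and consistency results of Lemmas \ref{lemma:crimen:cF}, \ref{lemma:crimen:c}, \ref{lemma:crimen:d}, \ref{lemma:crimen:d-B} and \ref{lemma:rh} hold pointwise in time and thus apply at $t_n$; Assumption \ref{Assump:f-disc:phase} replaces Assumption \ref{Assump:disc:phase} in the boundedness of $\varphi_h^n$ in $W^{1,\infty}(\O)$.

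Summing the resulting inequalities over $n = 1,\dots,N$, the telescoping sums control the $\ell^\infty(0,t_n;L^2)$-norms, while the coercive terms yield the $\ell^2$-bounds in the stronger norms. The temporal consistency produces, after summation, contributions of order $\tau^2 \sum_n \int_{t_{n-1}}^{t_n} \|\partial_{tt} \bu\|^2 \lesssim \tau^2$, matching the backward Euler rate. I then apply the discrete Gr\"onwall inequality (Lemma \ref{discrete:gronwall}) to absorb the linear zero-order terms in $\be_{h}^{\bu,n}$ and $e_{h}^{\varphi,n}$; this is where the smallness assumption on $\tau$ enters, via the requirement $\tau \kappa_j < 1$. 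Combining with the triangle inequality and the projection estimates gives the announced $h^m + \tau$ rate for the velocity and phase fields. Finally, for the pressure I mimic Theorem \ref{thm:pressure}: using the discrete inf-sup \eqref{eq:infsup} at each time level, the error bound for $\widehat{p}_h^n$ follows from the already obtained velocity and phase bounds together with the temporal consistency estimate, and summing in $\ell^2(0,t_n;Q)$ yields the $h^m + \tau$ rate for the pressure.

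The main technical obstacle I expect is the careful treatment of the nonlinear coupling terms at time level $t_n$ in combination with the discrete time derivative: in particular, ensuring that the terms arising from $c_{F,h}(\bu_h^n;\bu_h^n,\cdot)$ and $d_h(\varphi_h^n;\varphi_h^n,\cdot)$ — which are nonlinear in the discrete solution — can still be controlled by $|||\be_{h}^{\bu,n}|||_{0,\O}^2$ and $|||e_{h}^{\varphi,n}|||_{0,\O}^2$ after summation. This requires Assumption \ref{Assump:f-disc:phase} to uniformly bound $\varphi_h^n$ in $W^{1,\infty}$ across the time levels, as well as the uniform bound on $\|\bu_h^n\|_{\bV}$ coming from the energy estimate of Theorem \ref{theo:property:fully}, so that the discrete Gr\"onwall constants remain uniform in $n$ and independent of $h$ and $\tau$.
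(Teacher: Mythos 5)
Your proposal is correct and follows essentially the same route the paper intends: the paper's own proof is only a one-line reference to "the results established in the previous sections combined with standard arguments for fully discrete schemes" plus Young's inequality and the discrete Gr\"onwall lemma, and your plan (Ritz-projection splitting at each $t_n$, temporal consistency term bounded by Taylor expansion, telescoping identities, reuse of the semi-discrete Lemmas~\ref{lemma:momentum:error} and~\ref{lemma:phase:error} with Assumption~\ref{Assump:f-disc:phase} in place of Assumption~\ref{Assump:disc:phase}, summation, discrete Gr\"onwall, and the inf-sup argument for the pressure) is exactly that standard argument spelled out. Your explicit remark that the $O(\tau)$ rate requires time regularity of $\partial_{tt}\bu$ and $\partial_{tt}\varphi$ beyond Assumption~\ref{assump:reg:add}, absorbed into $\widehat{C}_{\rm est}$, is an accurate observation about an implicit hypothesis the paper also leaves unstated.
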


\begin{proof}
The  proof follows from the results established in the previous sections,
combined with standard arguments for fully discrete schemes
\cite{Vacca-parabolico,VK2023}, and the use of Young’s inequality together with
the discrete Gr\"{o}nwall inequality (cf. Lemma~\ref{discrete:gronwall}).
\end{proof}

We conclude this section with two remarks concerning the optimal convergence rates and further properties of the proposed schemes.
\begin{remark}[Optimal error bounds and mass conservation property]\label{rm:orders}
It is observed that the error estimates for the velocity and the phase variable established in Theorem~\ref{converg:semidisc} do not depend directly on the pressure variable. This follows from the fact that the proposed numerical scheme is exactly divergence-free, a property that is enforced at the discrete level through the construction of the virtual element velocity space (cf. Section~\ref{subsec:space:velo-pressure}).

Moreover, in order to achieve optimal convergence orders for all variables (cf. Theorems~\ref{converg:semidisc} and~\ref{converg:fullydisc}) and to ensure mass conservation (cf. Theorems \ref{theo:property:semi} and ~\ref{theo:property:fully}), a natural choice of the polynomial degrees is given by $m=k=\ell-1$. In the numerical experiments, the scheme is implemented using the pairs $(k,\ell)=(1,2)$ and $(k,\ell)=(2,3)$.
\end{remark}

\begin{remark} [Chemical potential] 
\label{rm:chemical}
At the continuous level, the \emph{chemical potential} is defined as
\[
w = -\Delta \varphi + \varepsilon^{-2} f(\varphi).
\]
We note that the virtual element scheme~\eqref{fully:disc:schm} does not provide a direct approximation of this quantity. Nevertheless, using the tools developed in this work, it is possible to construct an approximation of the chemical potential at each time level $t_n$, for $n=1,\ldots,N$. More precisely, we introduce the following projection-based postprocessing formula for the discrete chemical potential:
\begin{equation*}
	\widetilde{w}_h^n
	= - \Pi_{h}^{\ell-2} \Delta \varphi_h^n
	+ \varepsilon^{-2} f\!\left( \Pi_{h}^{\ell} \varphi_h^n \right).
\end{equation*}
By exploiting the approximation properties of the projections, together with the convergence result of Theorem~\ref{converg:fullydisc}, we obtain
\[
\|w-\widetilde{w}_h\|_{\ell^2(0,t_n;L^2(\Omega))}
\leq \widehat{C}_{{\rm est}, w} h^{\min\{k,\ell-1\}} + \tau,
\]
where the positive constant $\widehat{C}_{{\rm est}, w}$  depends on the same parameters as in Theorem~\ref{converg:fullydisc}.
\end{remark}
\setcounter{equation}{0}
\section{Numerical experiments}\label{sec:numericalResults}
In this section, we present two numerical experiments that validate the theoretical properties and the practical performance of the proposed method. 
In the first test we exhibit that the method provides the expected optimal convergence order, whereas in the second test  we validate the proposed scheme on a benchmark problem.

We recall that the time discretization is carried out using the backward Euler 
method. Accordingly, for each time step $n = 1,\dots,N$, the resulting nonlinear 
system is solved via a full Newton iteration. The stopping criterion is based on 
the $\ell^{\infty}$-norm of the relative residual, with a fixed tolerance of \(10^{-6}\).
In the numerical tests, in line with Remark \ref{rm:orders}, we set $(k,\ell)=(1,2)$ and $(k,\ell)=(2,3)$.
\subsection{Test 1. History convergence}\label{Test1} 
This numerical experiment is a non-physical test designed to verify the convergence of the fully-discrete scheme \eqref{fully:disc:schm1}. 
To this end, we assess the method's accuracy by using  the following computable error quantities:
	\begin{equation}\label{compu:errors}
		\begin{split}
{\tt err}(\bu_h,H^1)&:= 
\frac{\left(\sum_{E \in \Omega_h}\| \bnabla \bu(\cdot, T)- \boldsymbol{\Pi}^{0,k-1}_E  \bnabla  \bu_h(\cdot, T)\|_{0,E}^2\right)^{1/2}}{\|\bnabla \bu(\cdot, T)\|_{0,\O}},	
\\
  	{\tt err}(\widehat{p}_h,L^2) &:= \frac{\|\widehat{p}(\cdot, T)-  \widehat{p}_{h}(\cdot, T)\|_{0,\O}}{\|\widehat{p}(\cdot, T)\|_{0,\O}},
  	\\
	{\tt err}(\varphi,H^2)&:= 
	\frac{\left(\sum_{E \in \Omega_h}\| \bD^2 \varphi(\cdot, T)- \boldsymbol{\Pi}^{0,\ell-2}_E  \bD^2  \varphi_h(\cdot, T)\|_{0,E}^2\right)^{1/2}}{\|\bD^2 \varphi(\cdot, T)\|_{0,\O}}.	
		\end{split}
	\end{equation}
Notice that the convergence results in Section \ref{sec:error:analysis} yield error bounds in the space-time norm, and therefore do not provide explicit bounds for all the aforementioned error quantities.
However the best rate of convergence that one can
expect in the above norms corresponds to the one given by the interpolation estimates in Proposition \ref{approx:virtual:velo:phase}.

We consider Problem \eqref{model_a} on the square domain $\O = (0,1)^2$ and final time $T=1$.  
The physical parameters are all set to $1$, and the forcing terms (the right hand side of the third equation in \eqref{model_a} is not set to zero), the boundary and the initial conditions are are chosen according to the exact analytical solution
\begin{equation*}
	\begin{split}
\bu(x,y,t)&= \begin{pmatrix}
 \quad (t+1)\, \exp(x) \left( \sin(y) + y\cos(y) - x\sin(y) \right) \\
- (t+1)\, \exp(x) \left( \cos(y) + x\cos(y) + y\sin(y) \right)
\end{pmatrix}, \quad
p(x,y,t)= (2t+1)\: \sin(\pi x) \: \cos(4 \pi y), \\ 
\varphi(x,y,t) &= (t+1) \:  \cos(2\pi x)\: \cos(2\pi y).
	\end{split}
\end{equation*}
An affine dependence on the time variable $t$ was adopted to minimize the influence of the temporal error and to better assess the spatial error.

The domain $\Omega$ is partitioned using different
families of polygonal meshes (see Figure~\ref{meshes}): (a) \texttt{TRIANGULAR MESHES}; 
(b) \texttt{QUADRILATERAL MESHES};  (c) \texttt{VORONOI MESHES}.
\begin{figure}[!h]
	\centering
	\subfloat[\texttt{TRIANGULAR MESH}]{
		\includegraphics[width=0.30\textwidth]{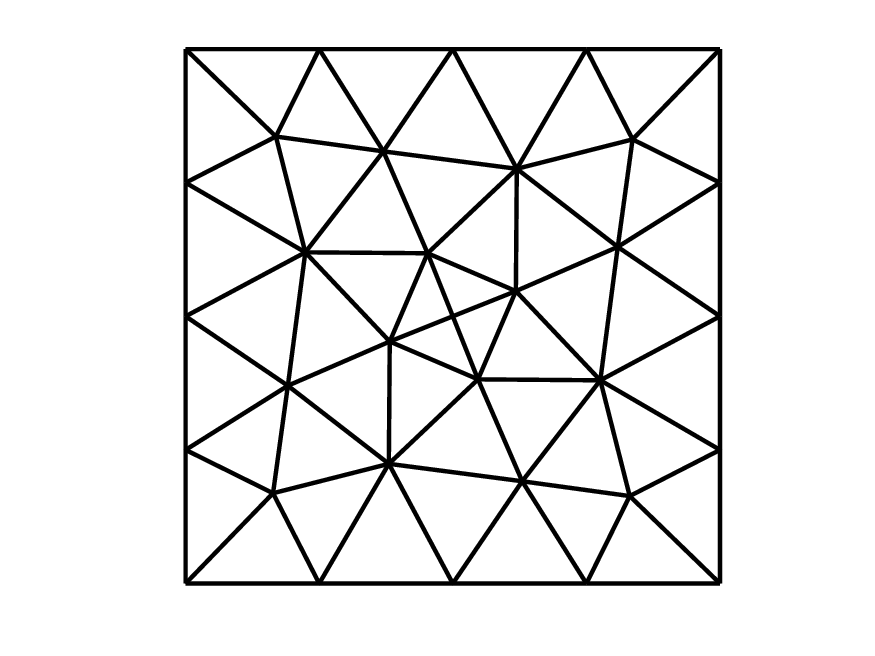}
		\label{mesh:tri}
	}\hfill
	\subfloat[\texttt{QUADRILATERAL MESH}]{
		\includegraphics[width=0.30\textwidth]{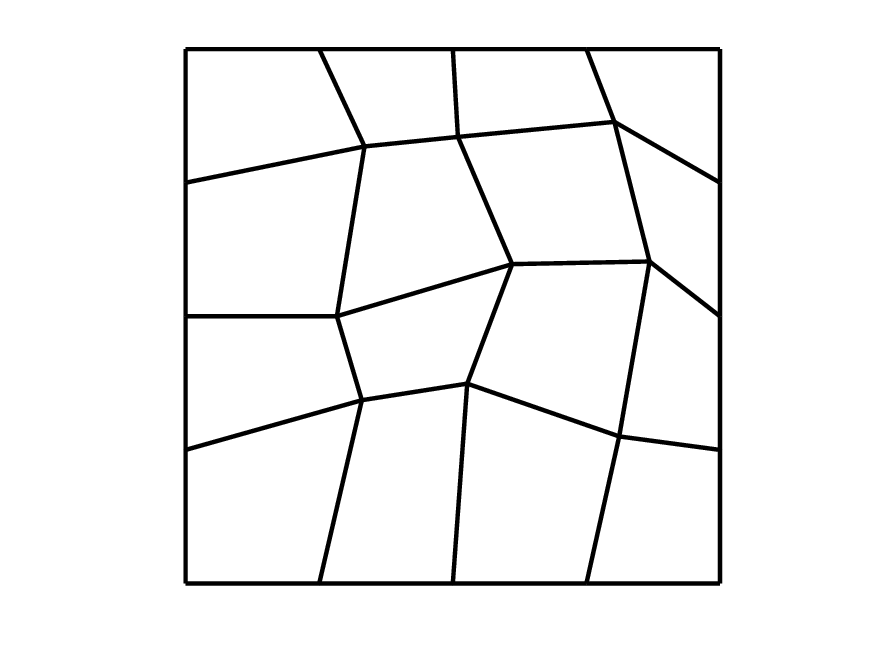}
		\label{mesh:quad}
	}\hfill
	\subfloat[\texttt{VORONOI MESH}]{
		\includegraphics[width=0.30\textwidth]{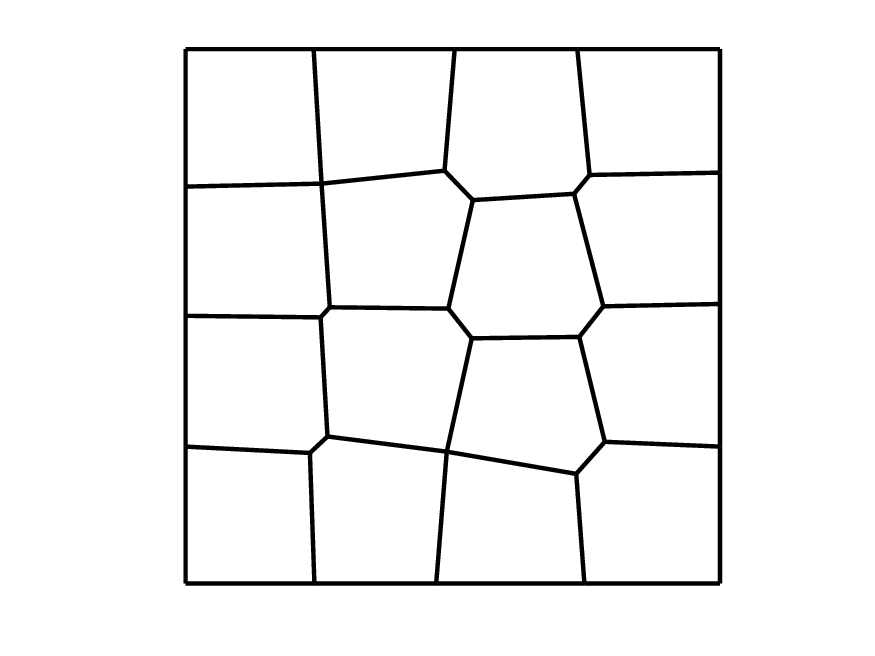}
		\label{mesh:voro}
	}
	\caption{Test 1. Sample of the adopted meshes.}
	\label{meshes}
\end{figure}
The spatial and 
temporal discretizations are refined simultaneously. 
Specifically, for each mesh family we consider the refinements 
$h = 1/4$, $1/8$, $1/16$, $1/32$, and we apply the same uniform refinements $ \tau$ in time. 
In Figure \ref{plot-lines-converg}   we display the errors ${\tt err}(\bu_h,H^1)$, ${\tt err}(\widehat{p}_h,L^2)$ and ${\tt err}(\varphi,H^2)$
for the fully-discrete scheme \eqref{fully:disc:schm1} with
$(k,\ell)=(1,2)$ and $(k,\ell)=(2,3)$
for the sequences of aforementioned meshes.
\begin{figure}[!h]
	\begin{center}
		\includegraphics[height=4.2cm, width=6.5cm]{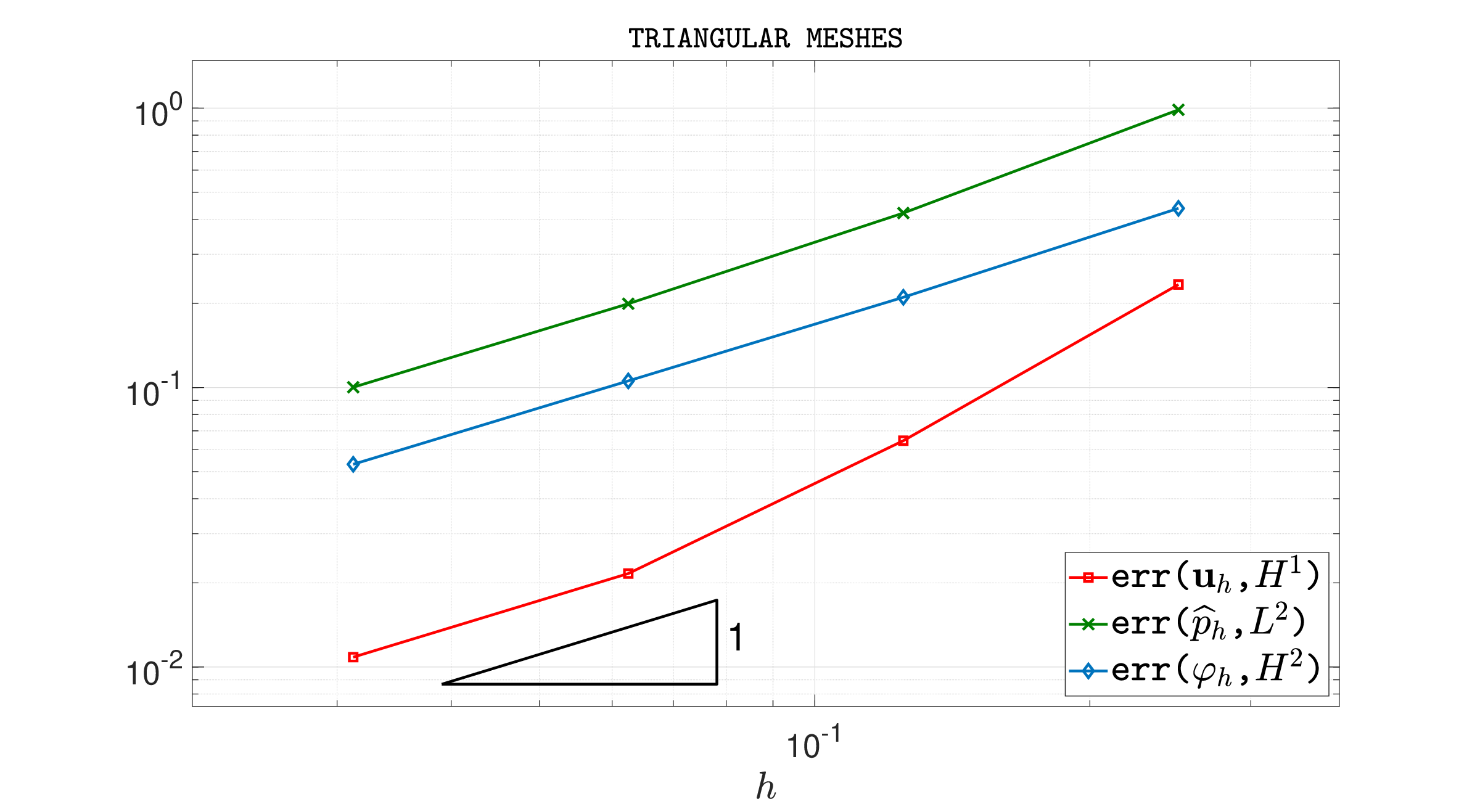}\hspace*{-0.40cm}
	  \includegraphics[height=4.2cm, width=6.5cm]{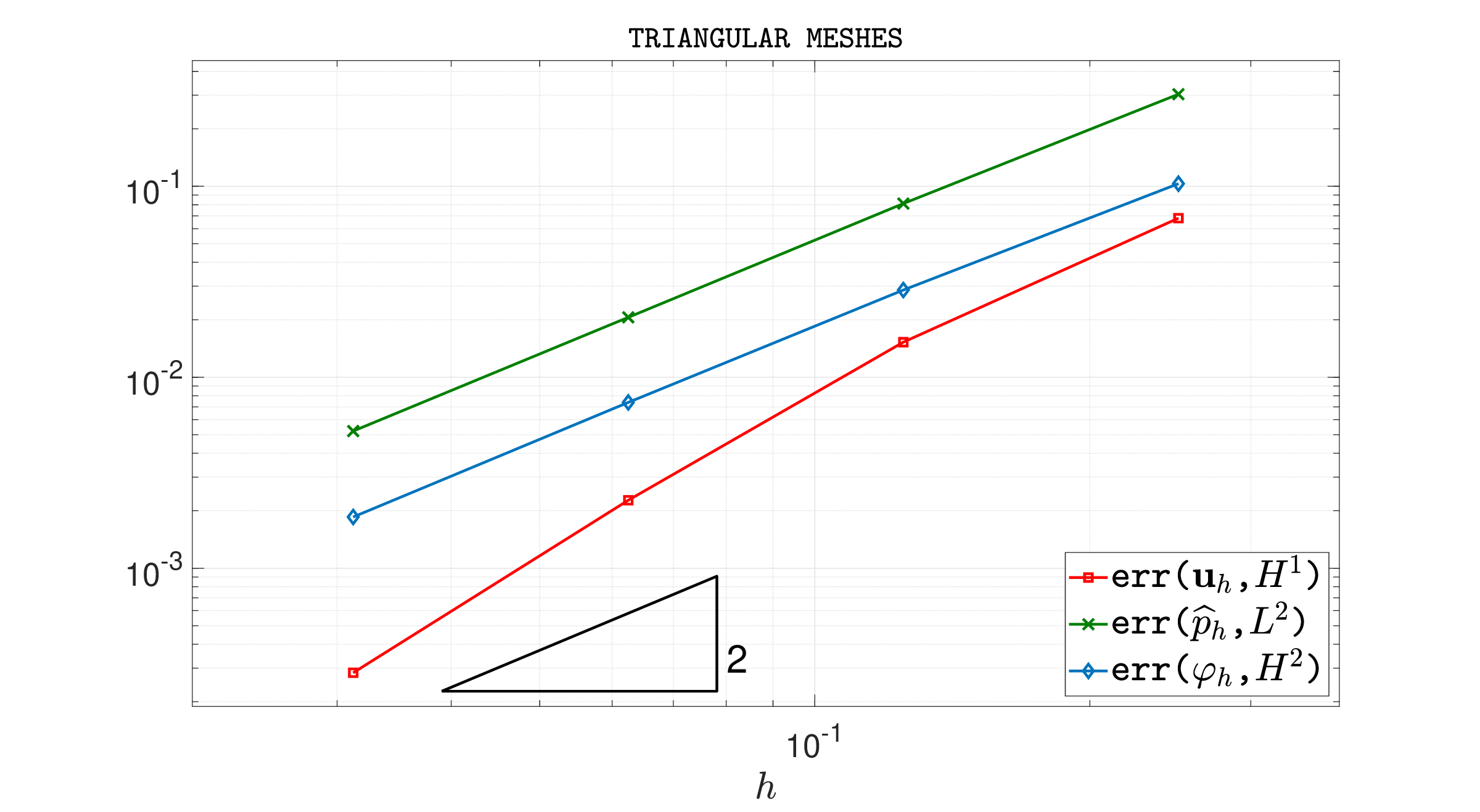}\\  	
		\includegraphics[height=4.2cm, width=6.5cm]{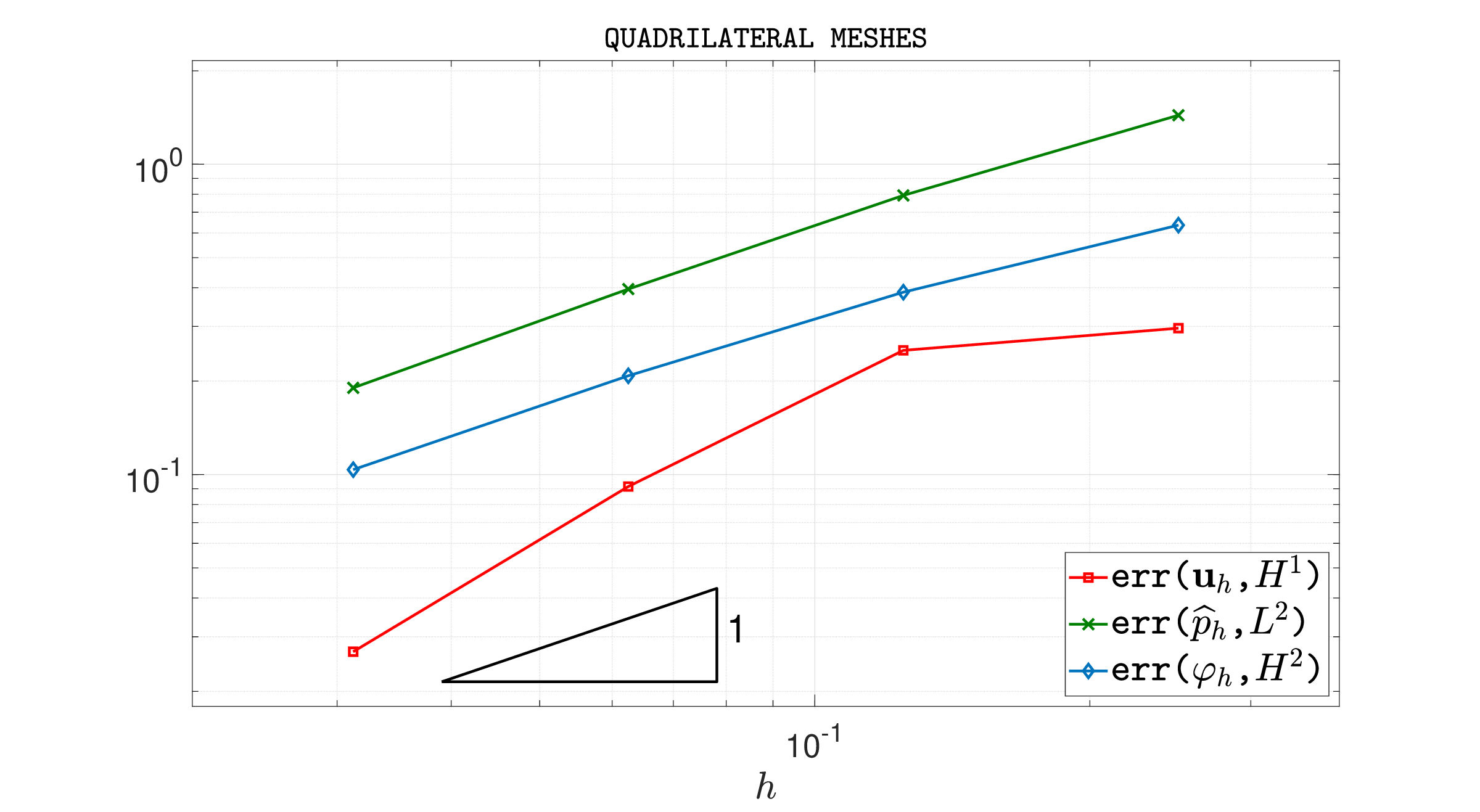}\hspace*{-0.40cm}
		\includegraphics[height=4.2cm, width=6.5cm]{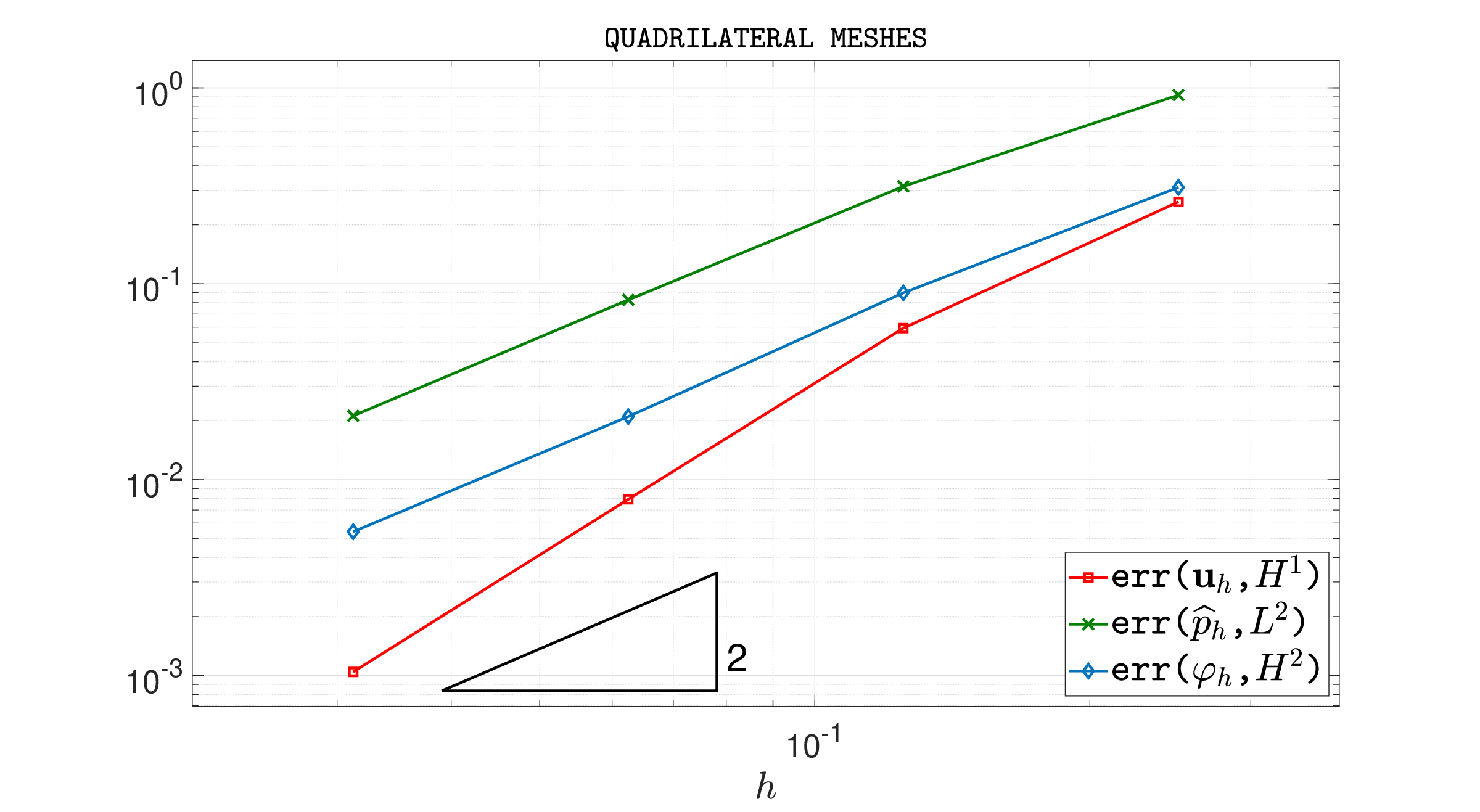}\\
		\includegraphics[height=4.2cm, width=6.5cm]{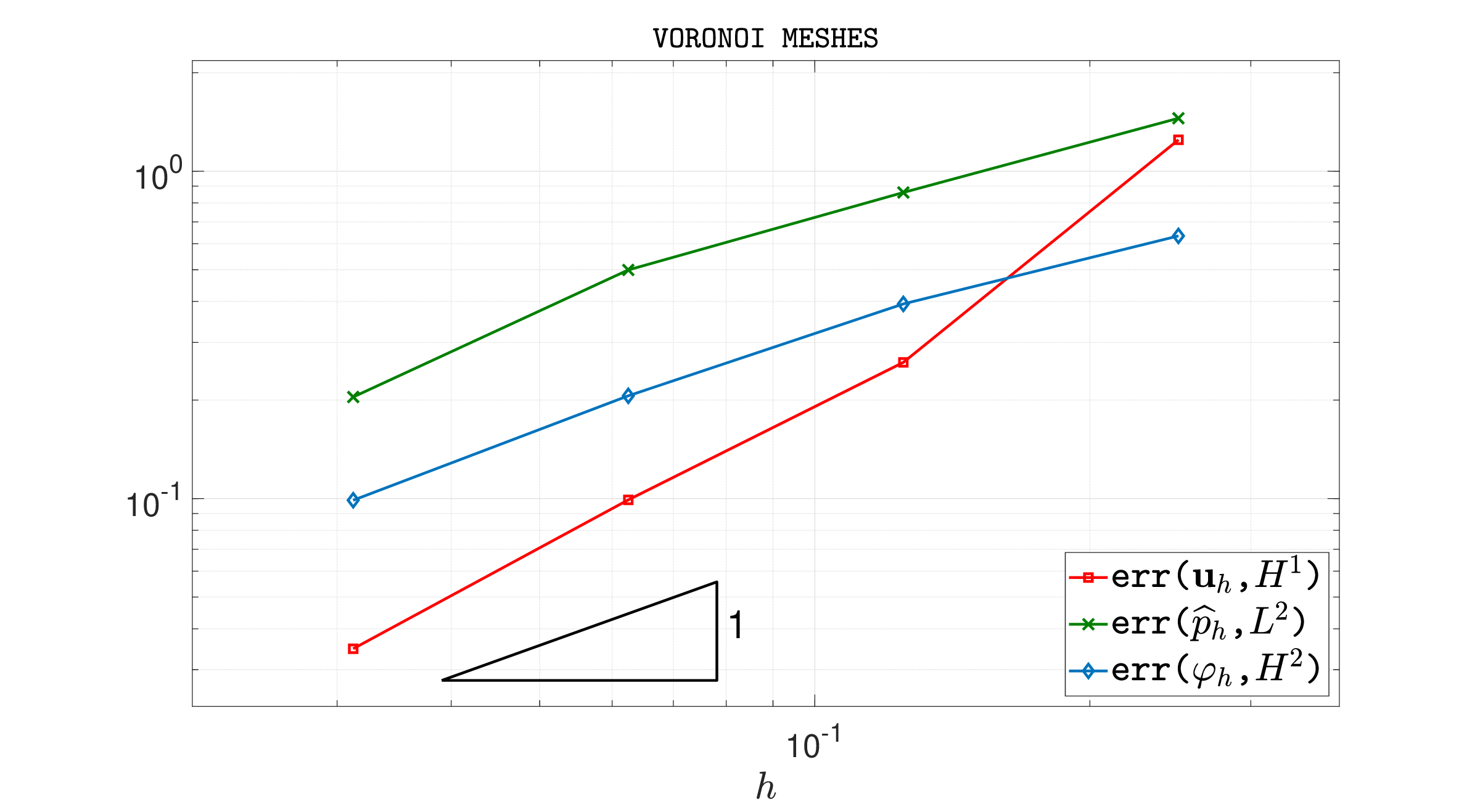}\hspace*{-0.40cm}
		\includegraphics[height=4.2cm, width=6.5cm]{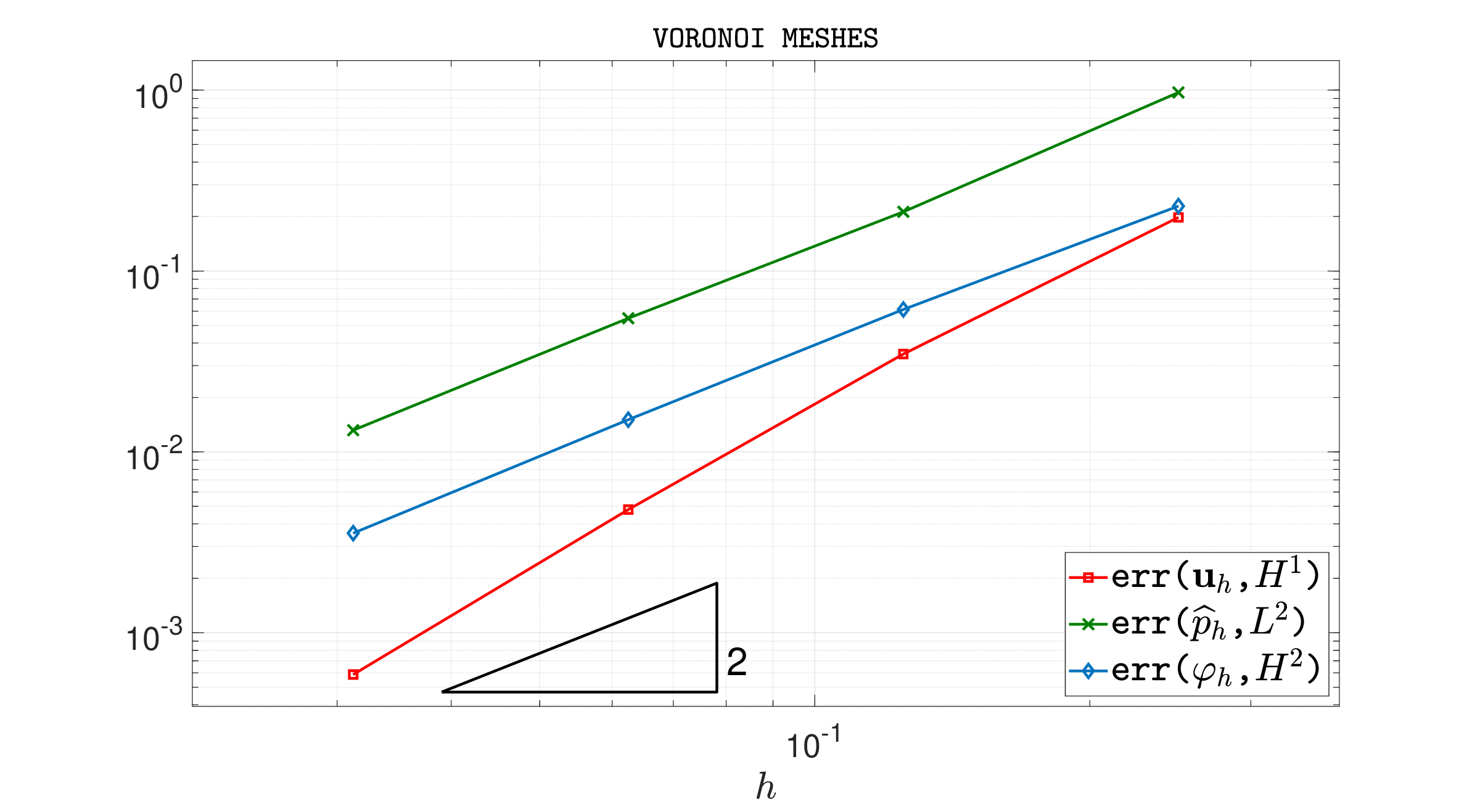}
	\end{center}
\caption{Test 1. Convergence histories of the VEM errors \eqref{compu:errors} at final time $T=1$. 
	The left panel reports the results for $(k,\ell)=(1,2)$, whereas the right panel 
	corresponds to $(k,\ell)=(2,3)$.}
	\label{plot-lines-converg} 
\end{figure}
The results exhibit excellent agreement with the optimistic bound in Proposition \ref{approx:virtual:velo:phase}, namely 
$\mathcal{O}(h)$ and $\mathcal{O}(h^2)$ for 
$(k,\ell)=(1,2)$ and $(k,\ell)=(2,3)$, respectively.

\subsection{Test 2. An elliptical fluid bubble}
Following the configuration of Problem \eqref{model_a} in~\cite[Test 1]{Feng2006}, we consider  the domain $\Omega = (-0.4,0.4)^2$ and set the physical parameters to $\nu = 1$, $\lambda = \gamma = 0.1$, and $\varepsilon = 2 \times 10^{-2}$. The initial velocity is taken as $\bu_0 = \mathbf{0}$, while the initial phase field is defined by
\[
\varphi_0(x_1,x_2)
= \tanh\!\left(\frac{x_1^2}{0.01}+\frac{x_2^2}{0.0225}-1\right).
\]
To adequately resolve the diffuse interface, we use a time step $\tau =  10^{-6}$ and a uniform Cartesian mesh with $64^2$ elements.  
The final simulation time is set to $T = 2 \times 10^{-4}$, and we employ polynomial degrees $(k,\ell) = (1,2)$.

The zero level set of $\varphi_0$, which characterizes the initial interface between the two fluids, corresponds to the ellipse
\[
\frac{x_1^2}{0.01}+\frac{x_2^2}{0.0225}=1,
\]
representing an elliptical bubble immersed in a surrounding fluid.

Figure~\ref{fig:snapshops:phase:Test2} shows snapshots of the computed phase field $\varphi_h^n$ corresponding to six selected times, namely $t_n=5 \times 10^{-6}, 5 \times 10^{-5}, 1\times  10^{-4}, 1.5 \times 10^{-4}, 1.75 \times 10^{-4}, 2 \times 10^{-4}$. In these plots, the red color represents the value $\varphi_h^n = 1$, while the blue color corresponds to $\varphi_h^n = -1$. These visualizations allow us to follow the evolution of the interface and to appreciate the qualitative behavior of the solution over time. We observe that the initial elliptical bubble rapidly evolves into a more circular shape. This behavior is in excellent agreement with the results reported in \cite{Feng2006}. 
\begin{figure}[h!]
	\centering
\subfloat[$\varphi_h=\varphi_0$ at $t_n=5 \times 10^{-6}$]{\includegraphics[width=0.3\linewidth, height=0.25\textwidth]{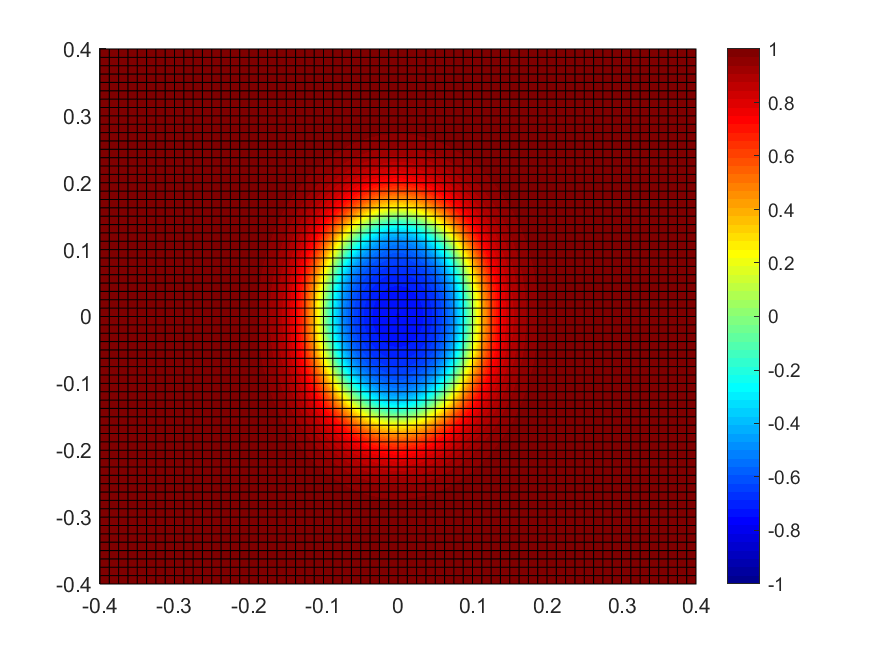}}\hfill
\subfloat[$\varphi_h$ at $t_n=5 \times 10^{-5}$]{\includegraphics[width=0.3\linewidth, height=0.25\textwidth]{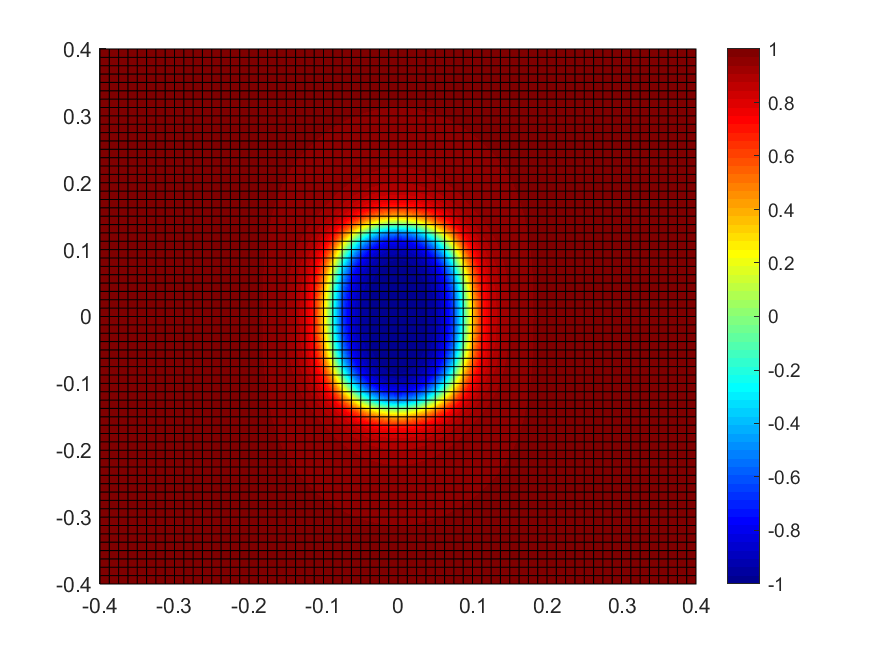}}\hfill
\subfloat[$\varphi_h$ at $t_n=1 \times 10^{-4}$]{\includegraphics[width=0.3\linewidth, height=0.25\textwidth]{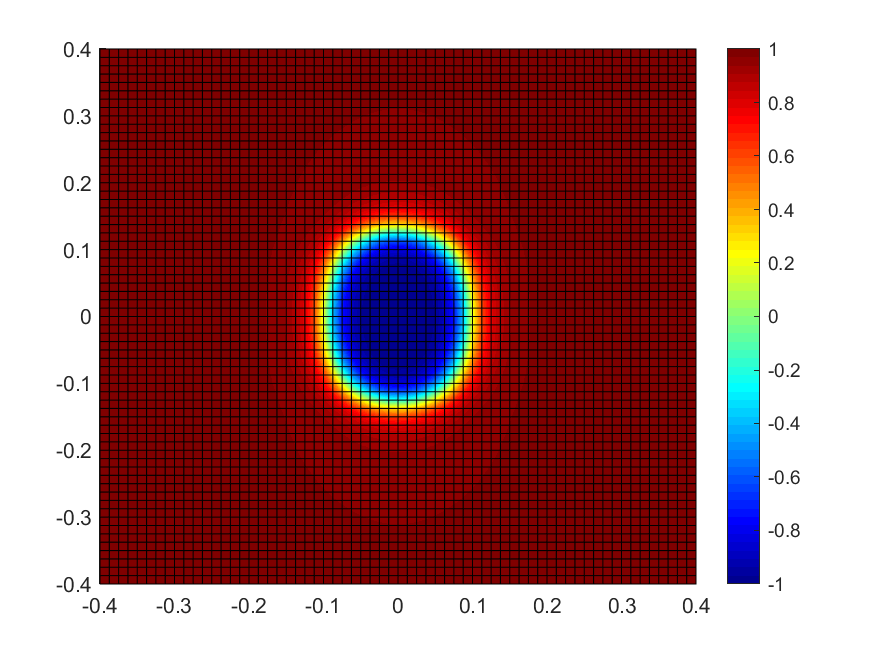}}\\	
\subfloat[$\varphi_h$ at $t_n=1.5\times 10^{-4}$]{\includegraphics[width=0.3\linewidth, height=0.25\textwidth]{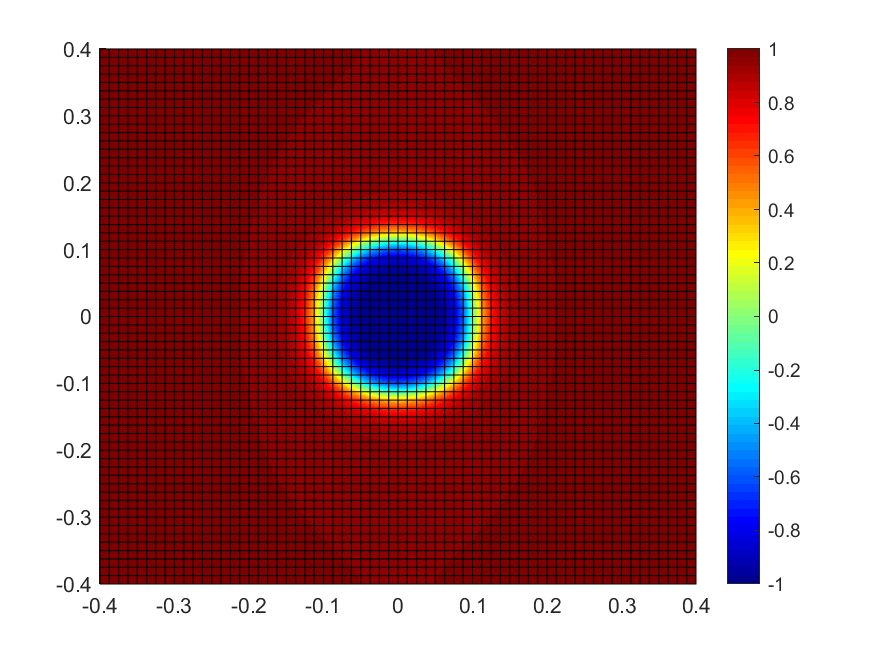}}\hfill
\subfloat[$\varphi_h$ at $t_n=1.75 \times 10^{-4}$]{\includegraphics[width=0.3\linewidth, height=0.25\textwidth]{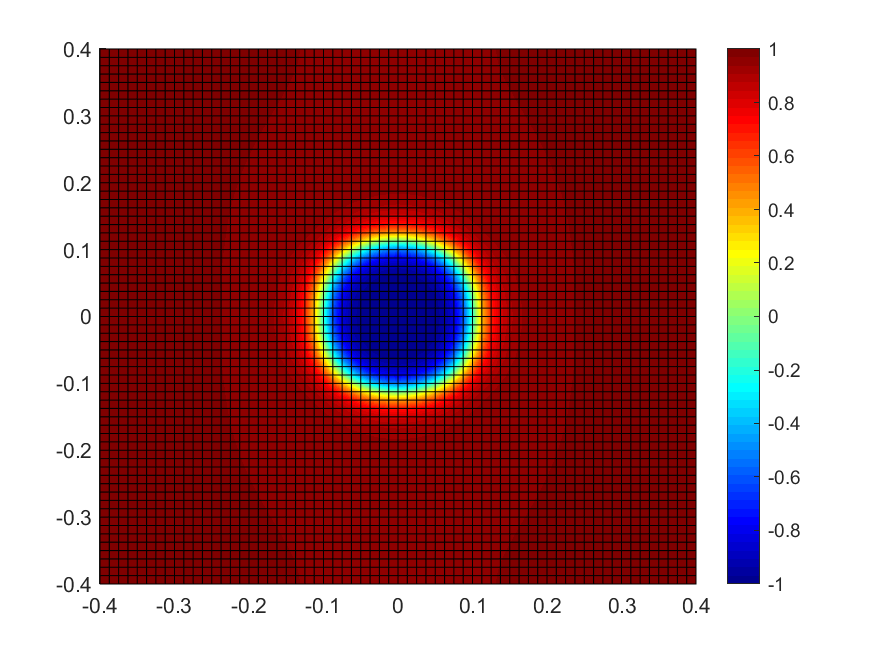}}\hfill
\subfloat[$\varphi_h$ at $t_n=2 \times 10^{-4}$]{\includegraphics[width=0.3\linewidth, height=0.25\textwidth]{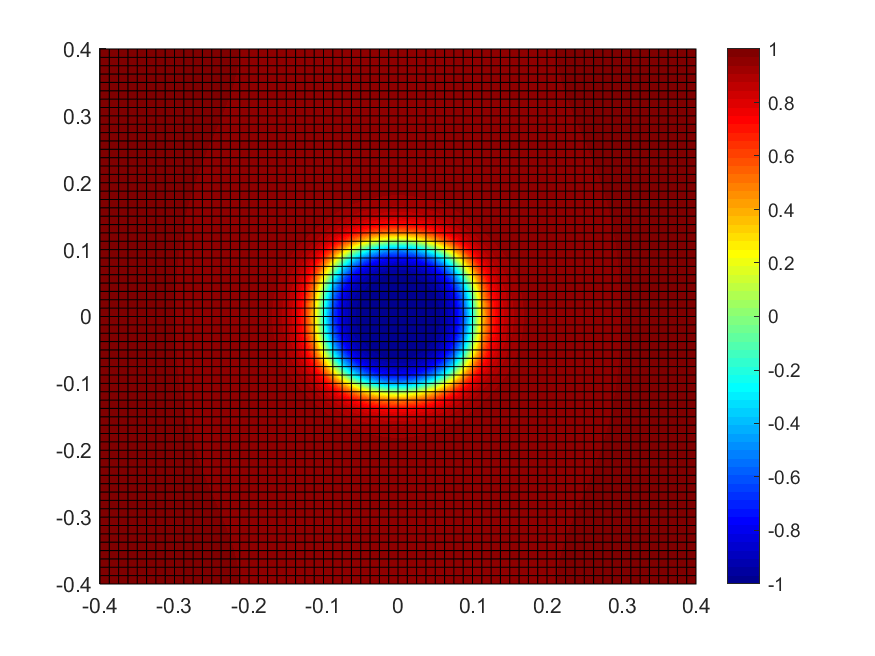}}
\caption{Test 2. Snapshots of the numerical phase solutions $\varphi^n_h$  at different time instants $t_n$, using the fully-discrete VE scheme~\eqref{fully:disc:schm}, $\nu=1$, $\lambda=\gamma=0.1$, $\varepsilon=2 \times 10^{-2}$, and $(k,\ell)=(1,2)$.}
	\label{fig:snapshops:phase:Test2}
\end{figure}

On the other hand, Figure~\ref{fig:snapshops:velo:Test2} presents snapshots of the arrow and streamline plots of the computed velocity field $ \bu^n_h$ at the time steps listed above. We observe that fluid vortices begin to form shortly after the initial time step, and these vortices become more pronounced as time progresses. Once again, these results are in complete agreement with those reported in \cite{Feng2006}. Moreover, Figure~\ref{fig:snapshops:ph:Test2} displays the approximated pseudo-pressure field at the same time instances. 

\begin{figure}[h!]
	\centering
\subfloat[$\bu_h$ at $t_n=5 \times 10^{-6}$]{\includegraphics[width=0.3\linewidth, height=0.25\textwidth]{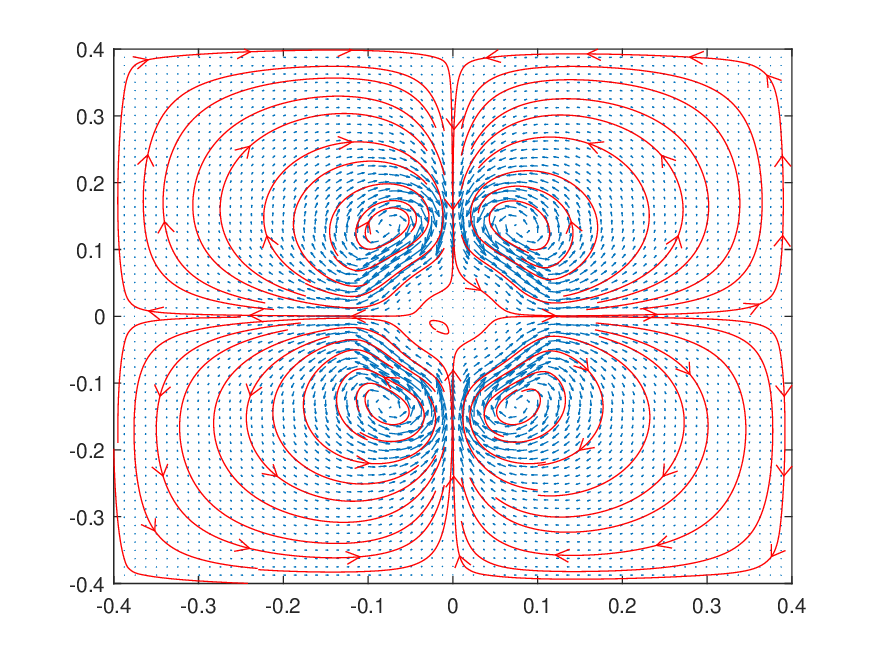}}\hfill
\subfloat[$\bu_h$ at $t_n=5 \times 10^{-5}$]{\includegraphics[width=0.3\linewidth, height=0.25\textwidth]{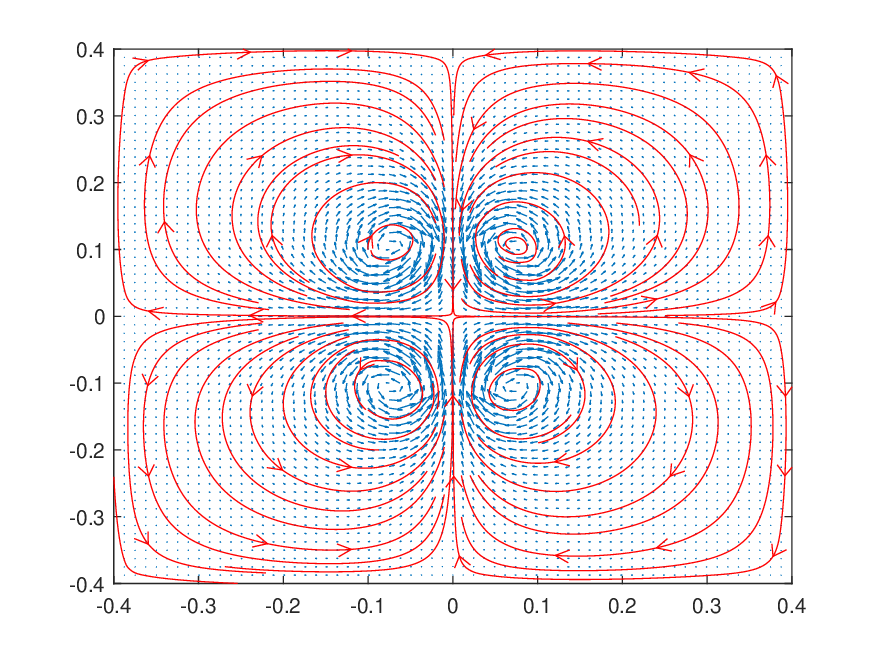}}\hfill
\subfloat[$\bu_h$ at $t_n=1 \times 10^{-4}$]{\includegraphics[width=0.3\linewidth, height=0.25\textwidth]{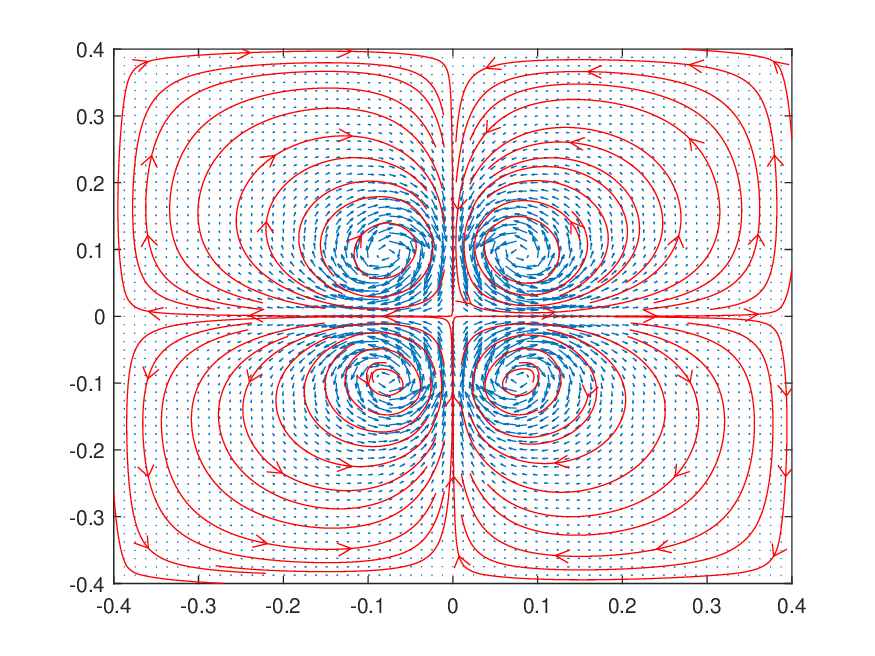}}\\	
\subfloat[$\bu_h$ at $t_n=1.5 \times 10^{-4}$]{\includegraphics[width=0.3\linewidth, height=0.25\textwidth]{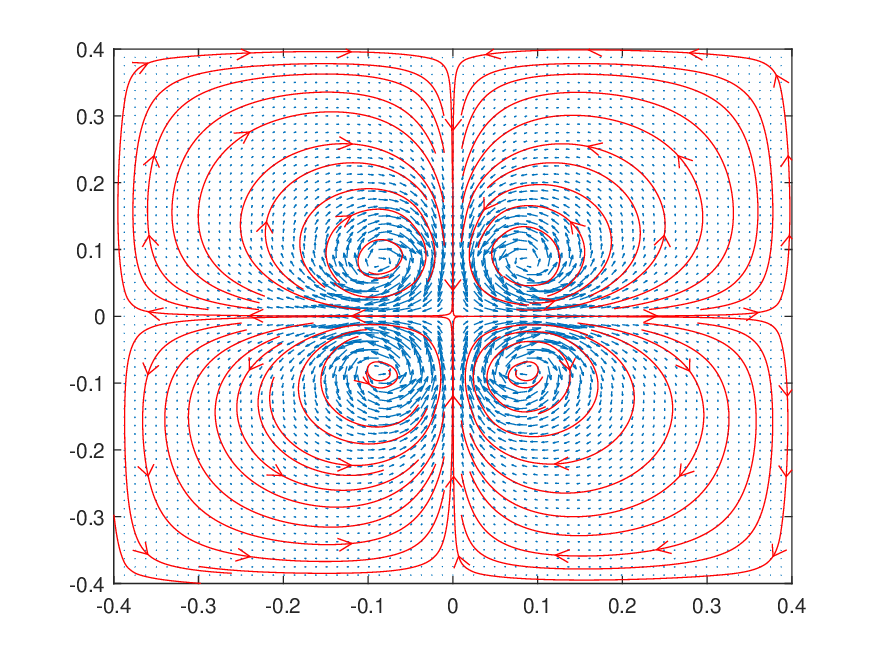}}\hfill
\subfloat[$\bu_h$ at $t_n=1.75 \times 10^{-4}$]{\includegraphics[width=0.3\linewidth, height=0.25\textwidth]{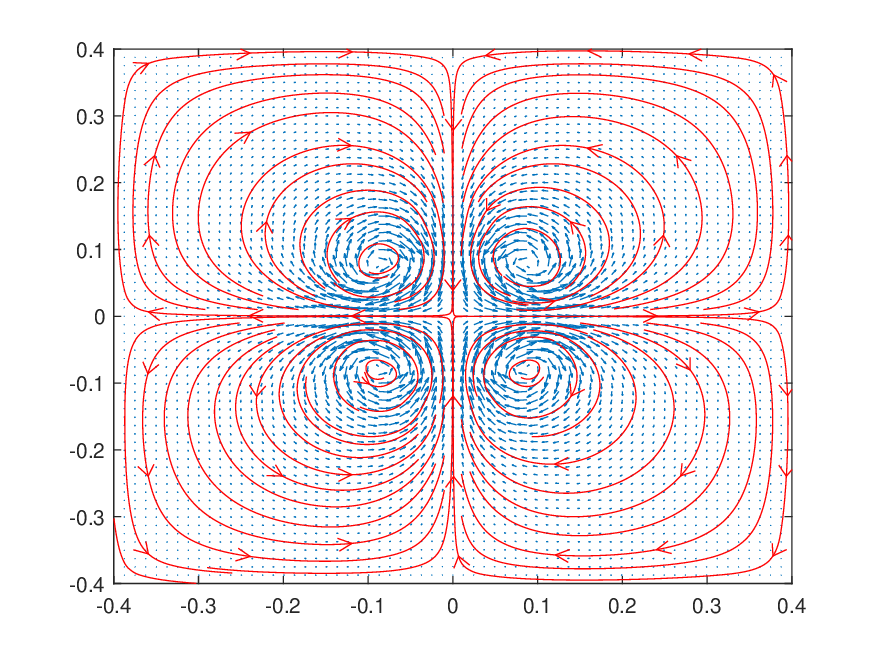}}\hfill
\subfloat[$\bu_h$ at $t_n=2 \times 10^{-4}$]{\includegraphics[width=0.3\linewidth, 
height=0.25\textwidth]{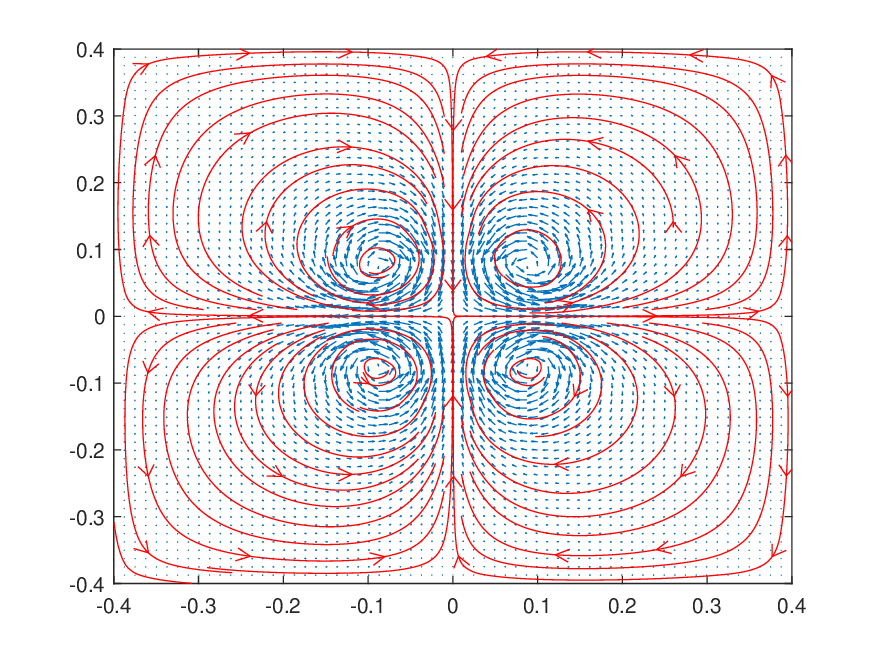}}
	\caption{Test 2. Snapshots and streamlines of the numerical velocity solutions $\bu^n_h$ at different time instants, using the fully-discrete VE scheme~\eqref{fully:disc:schm}, $\nu=1$, $\lambda=\gamma=0.1$, and $\varepsilon=2 \times 10^{-2}$, $(k,\ell)=(1,2)$.}
	\label{fig:snapshops:velo:Test2}
\end{figure}

\begin{figure}[h!]
	\centering
	\subfloat[$\widehat{p}_h=\varphi_0$ at $t_n=5 \times 10^{-6}$]{\includegraphics[width=0.3\linewidth, height=0.25\textwidth]{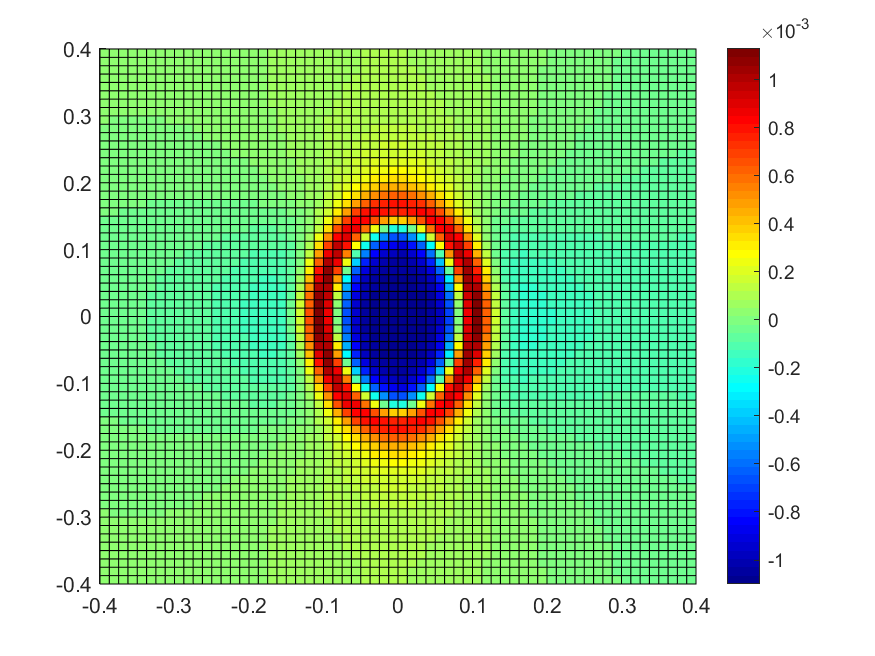}}\hfill
	\subfloat[$\widehat{p}_h$ at $t_n=5 \times 10^{-5}$]{\includegraphics[width=0.3\linewidth, height=0.25\textwidth]{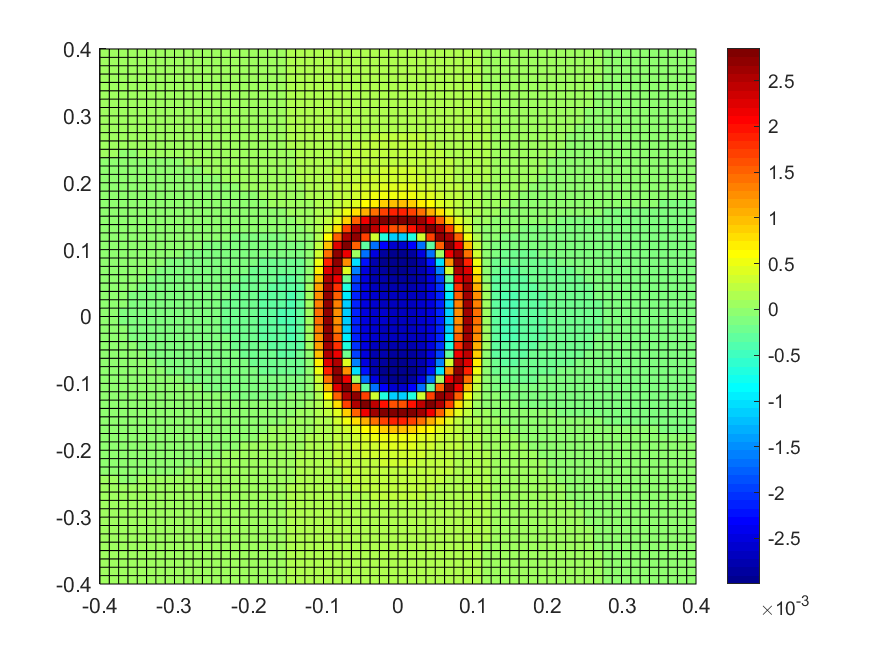}}\hfill
	\subfloat[$\widehat{p}_h$ at $t_n=1 \times 10^{-4}$]{\includegraphics[width=0.3\linewidth, height=0.25\textwidth]{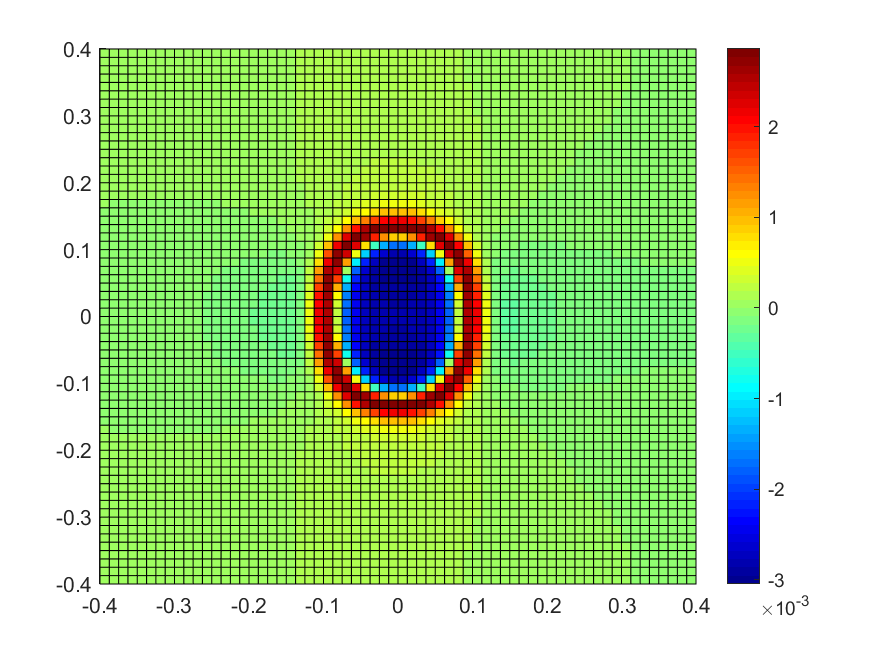}}\\	
	\subfloat[$\widehat{p}_h$ at $t_n=10^{-4}$]{\includegraphics[width=0.3\linewidth, 
		height=0.25\textwidth]{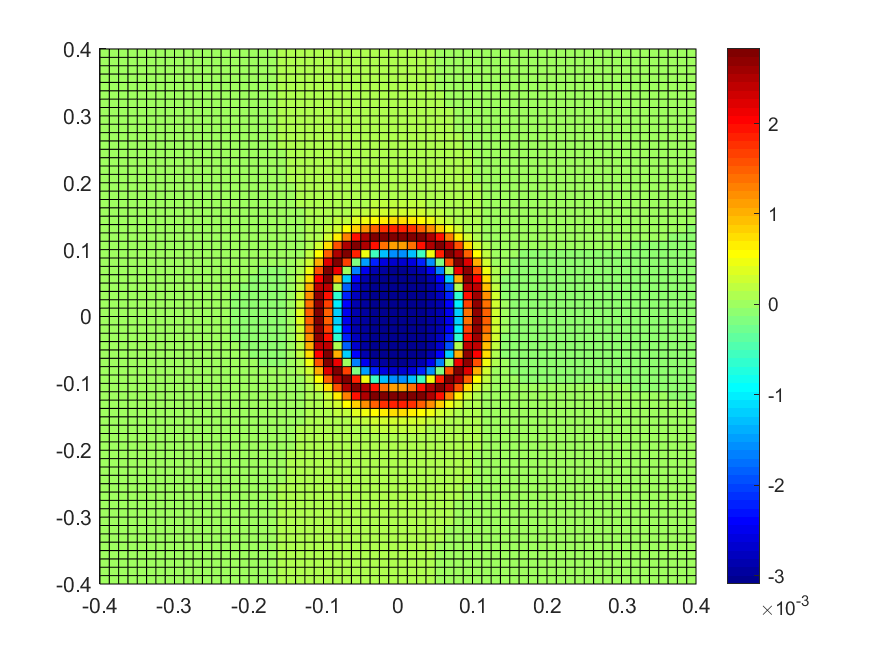}}\hfill
	\subfloat[$\widehat{p}_h$ at $t_n=1.75 \times 10^{-4}$]{\includegraphics[width=0.3\linewidth, height=0.25\textwidth]{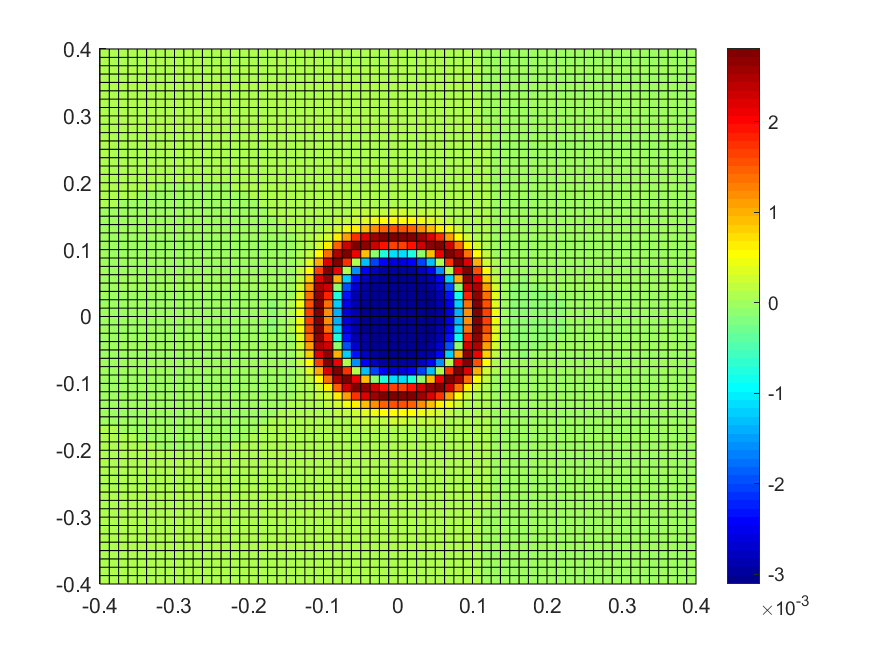}}\hfill
	\subfloat[$\widehat{p}_h$ at $t_n=2 \times 10^{-4}$]{\includegraphics[width=0.3\linewidth, height=0.25\textwidth]{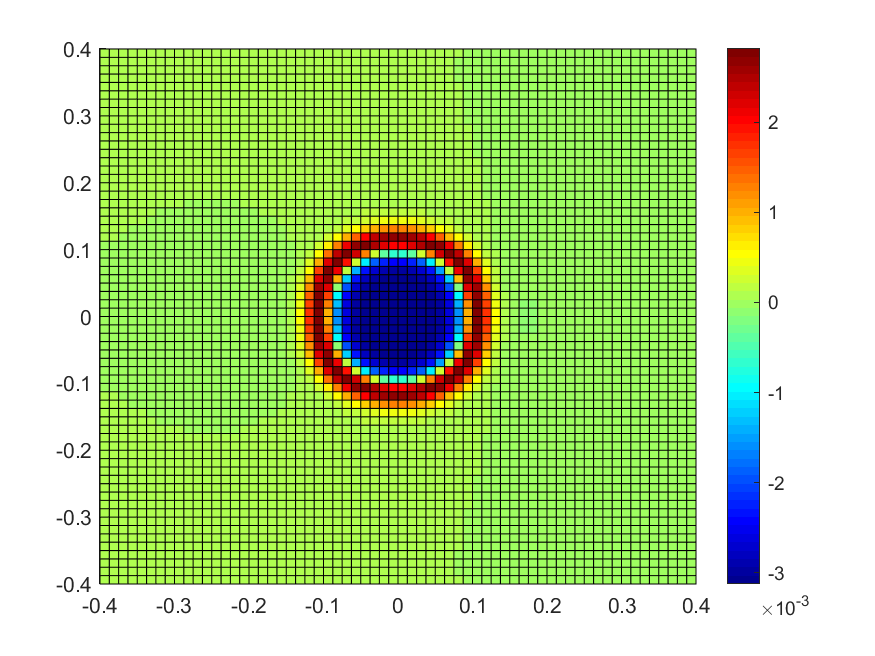}}
	\caption{Test 2. Snapshots  of the numerical pseudo-pressure solution $\widehat{p}^n_h$ at different time instants, using the fully-discrete VE scheme~\eqref{fully:disc:schm}, $\nu=1$, $\lambda=\gamma=0.1$, and $\varepsilon=2 \times 10^{-2}$, $(k,\ell)=(1,2)$.}
	\label{fig:snapshops:ph:Test2}
\end{figure}

The total mass $(\varphi_h^n,1)_\Omega$ 
remains constant throughout the entire simulation, namely $(\varphi_h^n,1)_\Omega \approx 0.53977092$ for $n=0,\ldots,N$.  This numerical evidence confirms the mass-conserving property of our fully-discrete scheme, in full agreement with the result established in Theorem~\ref{theo:property:fully}. 
\setcounter{equation}{0}
\section{Concluding remarks}\label{sec:conclusions}
In this work, we have developed and analyzed semi- and fully discrete virtual element schemes for the Navier--Stokes-Cahn--Hilliard system. The analysis is based on a novel variational formulation involving only the velocity--pressure pair and the phase field, which allows us to eliminate the chemical potential as a primary unknown and, consequently, to reduce the number of unknowns in the discrete problem. The proposed schemes combine divergence-free and $C^1$-conforming virtual elements of arbitrary order with a backward Euler time discretization.

A key ingredient of our approach is the introduction of a discrete skew-symmetric trilinear form for the convective term in the Cahn--Hilliard equation, which guarantees mass conservation and ensures a discrete energy stability property. The theoretical analysis delivers optimal error estimates for the velocity, pressure, and phase variables.

We have presented two numerical experiments that confirm the accuracy, stability, and flexibility of the method across different polynomial degrees and polygonal meshes, fully supporting the theoretical findings and demonstrating its effectiveness for two-phase incompressible fluid flows. In particular, the  second experiment verify the mass conservation property (cf. Theorem~\ref{theo:property:fully}). Promising extensions of this work include the design and analysis of schemes that satisfy a discrete energy law (see Remarks~\ref{remark:properties} and \ref{Remark:mass:conserv}). Achieving this poses several challenges and constitutes an appealing direction for future research.
{\small
\subsection*{Acknowledgements} 	
A.S and G.V. have been partially funded by  
PRIN2022 n. \verb+2022MBY5JM+  \emph{``FREYA - Fault REactivation: a hYbrid numerical Approach''} research grant
funded by the Italian Ministry of Universities and Research (MUR) and  by the European
Union through Next Generation EU, M4C2.
G.V. has been partially funded by 
PRIN2022PNRR n. \verb+P2022M7JZW+\emph{``SAFER MESH - Sustainable mAnagement oF watEr Resources: ModEls and numerical MetHods''} research grant,
funded by the Italian Ministry of Universities and Research (MUR) and  by the European
Union through Next Generation EU, M4C2.
G.V. is member of the Gruppo Nazionale Calcolo Scientifico (GNCS) at Istituto Nazionale di Alta Matematica (INdAM), Italia.
This research was carried out primarily during the postdoctoral fellowship of A.S. at the Università degli studio di Bari.}

\bibliographystyle{abbrv}
\bibliography{references_new}


\end{document}